\numberwithin{equation}{section}
\def\m{\medskip}
\newtheorem{thm}{Theorem}[section]
\newtheorem{proposition}[thm]{Proposition}
\newtheorem{corollary}[thm]{Corollary}
\newtheorem{question}[thm]{Question}
\newtheorem{prop}[thm]{Proposition}
\newtheorem{cor}[thm]{Corollary}
\theoremstyle{definition}
\newtheorem{definition}[thm]{Definition}
\newtheorem{rem}[thm]{Remark}
\newtheorem{df}[thm]{Definition}
\newtheorem{example}[thm]{Example}
\newcommand\theoref{Theorem~\ref}
\newcommand\propref{Prop.~\ref}
\newcommand\proprefs{Propositions~\ref}
\newcommand\corref{Corollary~\ref}
\newcommand\defref{Definition~\ref}
\newcommand\remref{Remark~\ref}
\newcommand{\CC}{\mathbb{C}}
\newcommand{\QQ}{\mathbb{Q}}
\newcommand{\RR}{\mathbb{R}}
\newcommand{\ZZ}{\mathbb{Z}}
\newcommand{\ga}{\alpha}
\newcommand{\gb}{\beta}
\newcommand{\gf}{\varphi}
\def\wgt{\operatorname{wgt}}
\def\TC{\operatorname{TC}}
\def\TCP{\operatorname{TC^{Pav}}}
\def\TCRD{\operatorname{TC^{RD}}}
\def\TCMW{\operatorname{TC^{MW}}}
\def\TCS{\operatorname{TC^{S}}}
\def\pt{\operatorname{pt}}
\def\cat{\operatorname{cat}}
\def\relcat{\operatorname{relcat}}
\def\secat{\operatorname{secat}}
\def\sec{\operatorname{secat}}
\def\id{\operatorname{id}}
\def\zcl{\operatorname{zcl}}
\def\ts{\times}
\def\ov{\overline}
\def\cl{\operatorname{cl}}
\long\def\forget#1\forgotten{} %
\begin{document}
\title[]{Maps of degree one, relative LS category and higher topological complexities}
%\title{On LS-category and topological complexity for maps of degree one}
%[Higher topological complexity of a map]

\author[Y. B. Rudyak]{Yuli B. Rudyak}
\address{Department of Mathematics, University of Florida, Gainesville, FL 32611-8105}
\email{rudyak@ufl.edu}

\author[S. Sarkar]{Soumen Sarkar}
\address{Department of Mathematical, IIT Madras, Chennai 600036, India}
\email{soumen@iitm.ac.in}

\date{\today}
\subjclass[2010]{55M30, 55S40}
\keywords{Lusternik-Schnirelmann category, sectional category, topological complexity}
\thanks{
}

\maketitle 
\abstract 

In this paper, we introduce relative LS category of a map and study some of its properties. Then we introduce `higher topological complexity' of a map, a homotopy invariant. We give a cohomological lower bound and compare it with previously known `topological complexity' of a map.  Moreover, we study the relation between Lusternik-Schnirelmann category and topological complexity of two closed oriented manifolds connected by a degree one map. 
\endabstract

\section{Introduction}
The concept of ``sectional category'' of  a fibration $p \colon E \to B$  was introduced by Schwarz in~\cite{sv} under the name ``genus''. James \cite{J} proposes to replace the overworked term ``genus'' by ``sectional category''.  This invariant $\secat(p)$ is one less than the Schwarz genus of $p$ in~\cite{sv}. In particular, $\secat(p)=0$ if and only if $p$ has a section. Sectional category of $p$ is the minimum cardinality of the open coverings of $B$ such that on each open set in the covering there is a homotopy section of $p$. If no such integer exists then it is convention that $\secat(p) = \infty$.   If $E$ is contractible and $p$ is surjective then $\secat(p)$ is equal to the classical   Lusternik--Schnirelmann category (in short LS category throughout this paper) of  $B$. Several applications and properties of sectional category and LS-category can be found in \cite{CLOT}. 

\vspace{0.1 cm}  

There is a generalization of LS category called category of a map $f \colon X \to Y$. We recall that LS category of $f$ is the minimum number $\cat(f)$ such that $X$ can be covered by $\cat(f) + 1$  many open subsets and the restriction of $f$ on each of these  open subsets is null-homotopic.   In particular, $\cat(i) = \cat(X)$.  It is a homotopy invariant and 
  \begin{equation}\label{eq_prod} 
\cat(f \times g) \leq \cat(f) + \cat(g)
\end{equation} see \cite{CLOT, Sta}. In this paper,  we, in particular introduce relative LS category of a map on a pair of spaces  and study some of its properties. 

\vspace{0.1 cm}  
Another well known particular case of sectional category is Farber's topological complexity \cite{Far}. He introduced topological complexity of a configuration space  to understand the navigational complexity of the motion of a robot. We denote  the closed unit interval $[0,1]$ by $I$. Let $X$ be a path connected Hausdorff space and $X^{[0,1]}$ the free path space in $X$ equipped with compact-open topology and 
\[
\pi \colon X^I  \to X \times X
\]
 the free path fibration defined by $\pi(\alpha) = (\alpha(0), \alpha(1))$ for $\alpha\in X^I$. Then  $\secat(\pi)$ is equal to Farber's topological complexity, denoted by $\TC (X)$. 

\m So, we have close relatives: topologisal complexity and Lusternik--Schnirelmann category. Both are numerical invariants and are special cases of Schwarz genus (sectional category).  Thus, since we work with  the pair (LS category of spaces, LS category of maps) and in view of parallelism between LS and TC, it seems reasonable to  loop the presentation and introduce TC of mappings.

A. Dranishnikov  enquired an appropriate definition for topological complexity of a map in his talk at the CIEM, Castro Urdiales in 2014. Let $f \colon X \to Y$ be a map. Given $y_1, y_2 \in \mbox{Im}{f}$, find a continuous motion planner $\alpha$ in $X$ such that $f(\alpha(0))=y_1$ and $f(\alpha(1))=y_2$.  To answer this, we introduce a definition of ``(higher) topological complexity'' of a map.

\forget
 which is a new homotopy invariant.

{\color{red} Maybe to remove ``which is a new topology invariant''?}
%\forgotten

 \vspace{0.1 cm}  
We note that there is a notion of topological complexity of a map, but it lacks topologist's basic demand namely homotopy invariance.

{\color{red}I suggest to replace the above sentence as follow: We note that there are several notions of topological complexity of a map, but most of them lacks topologist's basic demand namely homotopy invariance. }
\forgotten

 \vspace{0.1 cm}  
 We note that there are several notions of topological complexity of a map, but most of them lacks topologist's basic demand namely homotopy invariance. The basic goal to define map from one state $X$ to another state $Y$  is to study properties of one from another. For example, if $f \colon X \to Y$ and $f$  is nice enough then the complexity of the state $X$ could be approximate by the complexity of the state $Y$, and this expectation is very natural. On the other hand, topologists expect that if $X$ and $Y$ are less complex then complexity of $f$ has be less. In particular, if $X$ and $Y$ are contractible, that is there is no obstruction in both the state, then from topological point of view, complexity of a map from $X$ to $Y$ should be trivial. But an Example in page 4 of \cite {Pav} contradicts this topological expectation. So, we believe our definition is more appropriate according to topological point of view.

\vspace{0.1 cm}  
Recently Murillo and Wu~\cite{MuWu} gave a definition of `topological complexity' of a (work) map which is a homotopy invariant. They gave a cohomological lower bound for this invariant. However, our approach is totally different to define `topological complexity, of a map as well as we  introduce higher topological complexity of a map.   

 \vspace{0.1 cm}  
 The paper is organize as follows. 
 In Section \ref{sec_rel_cat_weight}, we recall two versions of relative category and introduce the concept of relative LS category of a map which is denoted by $\relcat(f)$. We also study several properties of this invariant which generalizes some of the properties of category of a map. We show that
$$  \relcat(f \times g) \leq \relcat(f) + \relcat(g)$$
  %\end{equation}
and give a cohomological lower bound of $\relcat(f)$ in Proposition \ref{prop_prod_ineq} and Theorem \ref{thm:cohom_low_bnd} respectively. We ask the Question \ref{gen_Ganea_conj} following Ganea conjecture.  

\vspace{0.1 cm}  
In Section \ref{sec_top_comp_map}, we introduce the concept of `higher topological complexity', denoted by $\TC_n(f)$,  of a map $f \colon X \to Y$. We show that it is a homotopy invariant and the following holds.
 $$\TC_n(f) \leq \min \{\TC_n(X), \TC_n(Y)\}$$ 
and  $$\cat(f) \leq \TC_n(f) \leq \TC_{n+1}(f) \leq \cat(f^{n+1}) \leq (n+1) \cat(f)$$ for all  $n \geq 2$.
That is growth of $\TC_n(f)$ is linear. 
The works of \cite{Pav, RaDe} and \cite{MuWu} study `topological complexity' of a map. However, we discuss higher analogue of it. We compare our definition with the previous ones. 

\vspace{0.1 cm}  
Section \ref{sec_lscat_deg_map} is dedicated to answer the following open problem for a broader class of manifolds. This question was asked by Rudyak \cite{Rud99} and now it is consider as a conjecture, see \cite{Dra, DS}. 

\begin{question}\label{Rud_conj}
Given two closed connected orientable manifolds $M$ and $N$, assume that there exists a map
$f \colon M \to N$ of degree $\pm 1$. Is it true that $\cat (M) \geq \cat  N $?
\end{question} 

\begin{rem}\label{r:deg}
 Since we speak about the degree of a map $M\to N$, the manifolds $M$ and $N$ (like in \ref{Rud_conj}, say) must be assumed to be oriented. So, in future we will not mention ``oriented'' in such situations
 \end{rem}

\vspace{0.1 cm}  
We replace $\cat$ by $\TC$ in Question \ref{Rud_conj} and ask a similar question in Section \ref{sec_topcom_deg_map}. We give affirmative answer to this question too for several category of manifolds and when the dimensions are less than 4.  

\m Through the paper, the word ``space'' means ``topological space'' unless some other is said explicitly, and all maps of spaces are assumed to be continuous. The word `smooth'' means $C^{\infty}$ for spaces as well as for maps. Finally, all manifolds are assumed to be smooth and connected unless some other is said explicitly.

\section{Two Versions of Relative Category} \label{sec_rel_cat_weight}
In this section, we introduce relative category of a map and study some of its properties. 
Let $\cat X$ denote the LS category of a space $X$. Note that we use the normalized category, that is $\cat$ of the point is zero. Many topological constructions exploit relative version, informally saying to use an object ``modulo subspaces''. Here we use two following definitions. 

\begin{definition}[{\cite[Definition 7.1]{CLOT}}]\label{d:clot}
Let $(X, A)$ be a pair of topological spaces. The relative category $\cat(X, A)$ is the least non-negative integer $k$ such that $X$ can be cover by open sets $V_0, V_1, \ldots, V_k$ with $A\subseteq V_0$  and such that, for $i\geq 1$, the sets $V_i$ are contractible in $X$, and there exists a homotopy of pairs $H \colon (V_0 \times [0,1], A \times [0, 1]) \to (X, A)$ with $H(-, 1)$ is given by the inclusion map $V_0 \hookrightarrow X$ and $H(V_0, 0) \subseteq A$.
\end{definition}

\begin{definition}[{\cite[Definition 2.1, 2.2]{LuMa}}]\label{d:relcat}
\
\begin{enumerate}
\item Let $(X, A) $ be a pair of topological spaces. An open subset $U$ of $X$ is called \emph{relative categorical} if there is a homotopy $H \colon U \times [0,1] \to X$ such that $H{(-, 1)}$ is the inclusion $U \hookrightarrow X$ and $H(U, 0) \subseteq A$.  
\item Relative category of the pair $(X, A)$, denoted by $\relcat(X,A)$ is the least integer $n$ such that $X$ can be cover by $n+1$ many relative categorical sets. Otherwise, we say this is infinity.
\end{enumerate}
\end{definition}

Note that $\cat(X, x_0)=\cat X=\relcat (X, x_0)$ where $x_0 \in X$. The following example shows that  
\defref{d:clot} and \defref{d:relcat} are not equivalent, in general. 

\begin{example}
Let $X=D^n$ be the unit closed $n$-ball in $\mathbb{R}^n$ and $A=\partial{D^n}$ its boundary. We prove that  $\relcat(X, A)=0$ and $\cat(X, A)=1$.

\m First, note that $\cat(X, A) \geq 1$, since the open subset $V_0=X$ of $X$ does not satisfy the homotopy condition in Definition \ref{d:clot}. Now, let $V_0= X - \{0\}$ be he punctured $n$-ball and $V_1 = X -A$. Then $\{V_0, V_1\}$ is an open covering of $X$. Observe that this covering satisfy the conditions in Definition \ref{d:clot}. Therefore, $\cat(X, A)=1$.

On the other hand, the open set $V_0=X$ is contractile to a point in $A$, and thus $\relcat(X,A)=0$. \qed
\end{example}

Now we are in a position to introduce the concept of relative category of a map. Then we study their several properties in the remaining of this section.  

\begin{definition}\label{def_rel_inessential}
Let $g \colon (B, C) \to (X, A)$ be a map of pairs ($C$ could be empty subset). Then $g$ is called \emph{relatively inessential} map if there is a homotopy $H \colon B \times [0, 1] \to X$ such that  $H(-, 1)$ is $g$, $H(B, 0) \subseteq A$, $H(C, t) \subseteq A$ for all $t \in [0,1]$.
\end{definition}
 
\begin{definition}\label{d:relcat_map}
Let $f \colon (B, C) \to (X, A)$ be a map of pairs. Then $ \relcat(f) \leq \ell$ if $B$ has an open cover by $B_0, \ldots, B_{\ell}$ such that $f|_{(B_i, B_i\cap C)}$ is relatively inessential. Minimum $\ell$ with this property is called relative-category of $f$. Otherwise we say it is infinity.
\end{definition}

Clearly, if $C= \emptyset $ or $C = \{pt\}$ and  $ A = \{pt\}$ then $\relcat(f)$ turns into known invariant $\cat(f)$ for $f \colon B \to X$, see  \cite[Exercise 1.16]{CLOT}.

\begin{thm}\label{thm_hom_inv}
The number $ \relcat(f)$ is a homotopy invariant.
\end{thm}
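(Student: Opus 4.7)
The plan is to interpret ``homotopy invariance'' in the most natural sense for maps of pairs: if $f_0,f_1\colon (B,C)\to(X,A)$ are homotopic as maps of pairs via a homotopy $F\colon B\times [0,1]\to X$ with $F(-,0)=f_0$, $F(-,1)=f_1$ and $F(C\times [0,1])\subseteq A$, then $\relcat(f_0)=\relcat(f_1)$. By symmetry it suffices to show $\relcat(f_0)\le \relcat(f_1)$.

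Suppose $\relcat(f_1)\le \ell$ and let $B_0,\dots,B_\ell$ be an open cover of $B$ together with homotopies $H_i\colon B_i\times[0,1]\to X$ witnessing that $f_1|_{(B_i,B_i\cap C)}$ is relatively inessential; that is, $H_i(-,1)=f_1|_{B_i}$, $H_i(B_i,0)\subseteq A$, and $H_i((B_i\cap C)\times[0,1])\subseteq A$. The key construction is to prepend each $H_i$ with the (reverse of the) restriction of $F$ to $B_i\times[0,1]$. Concretely, set
\[
G_i(b,t)=\begin{cases} H_i(b,2t), & 0\le t\le 1/2,\\ F(b,2-2t), & 1/2\le t\le 1.\end{cases}
\]
This is continuous because at $t=1/2$ both pieces equal $f_1(b)$, and it satisfies $G_i(-,1)=f_0|_{B_i}$ and $G_i(B_i,0)=H_i(B_i,0)\subseteq A$, which are the first two conditions of Definition~\ref{def_rel_inessential} for $f_0|_{B_i}$.

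The step that requires the homotopy-of-pairs hypothesis is verifying the third condition $G_i((B_i\cap C)\times[0,1])\subseteq A$. On $0\le t\le 1/2$ this is immediate from the corresponding property of $H_i$. On $1/2\le t\le 1$ it is exactly the place where we need $F(C\times[0,1])\subseteq A$; without this the pasted homotopy $G_i$ could leave $A$ when $b\in C$, and the argument would fail. Given that hypothesis, however, the inclusion holds, so $G_i$ witnesses that $f_0|_{(B_i,B_i\cap C)}$ is relatively inessential, and the same cover $B_0,\dots,B_\ell$ shows $\relcat(f_0)\le \ell$. Swapping the roles of $f_0,f_1$ (and using the reverse homotopy $F(b,1-t)$, which is still a homotopy of pairs) gives the opposite inequality, and hence $\relcat(f_0)=\relcat(f_1)$.

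If the authors mean the stronger statement that $\relcat(f)$ depends only on the homotopy class of $f$ within the homotopy type of the pairs $(B,C)$ and $(X,A)$, one reduces to the above by the standard trick: given homotopy equivalences of pairs $\varphi\colon(B',C')\to(B,C)$ and $\psi\colon(X,A)\to(X',A')$, one shows $\relcat(\psi\circ f\circ\varphi)=\relcat(f)$ by pulling back an optimal open cover along $\varphi$ (open sets pull back to open sets, and the condition $\varphi(C')\subseteq C$ preserves the pair structure) and postcomposing the relative-inessential homotopies with $\psi$, then uses the homotopy-invariance statement above to absorb $\varphi\circ\varphi^{-1}\simeq\id$ and $\psi^{-1}\circ\psi\simeq\id$ as homotopies of pairs. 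The main obstacle throughout is the bookkeeping on $C$: every homotopy we manipulate must keep $C$ inside $A$ for all time, which is the defining feature of relative inessentiality and the reason a plain homotopy (not a homotopy of pairs) would not suffice.
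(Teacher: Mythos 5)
Your proof is correct and follows essentially the same route as the paper: both take a cover witnessing $\relcat$ of one map, concatenate each relative-inessentiality homotopy $H_i$ with the given homotopy of pairs, and observe that the hypothesis $F(C\times[0,1])\subseteq A$ is exactly what keeps $B_i\cap C$ inside $A$ throughout the pasted homotopy. The only real difference is that you write the pasting formula out explicitly where the paper compresses it to ``Now we use the homotopy $H$ to conclude\dots''; note also that the paper's phrase ``homotopic relative to $C$'' is shorthand for precisely the homotopy-of-pairs condition $H(C,t)\subseteq A$ that you isolate, not the stronger requirement that $H$ fix $C$ pointwise.
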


\begin{proof}
Let $f, g \colon (B, C) \to (X, A)$ be homotopic maps relative to $C$. Then there is a homotopy
$
H \colon B \times [0, 1] \to X
$
 such that $H(b, 0) = f(b)$, $H(b, 1)=g(b)$ for all $b \in B$,
and $H(C, t) \subseteq A$ for all $t \in [0, 1]$. Let $B_i$ be an open subset of $B$ such that the map
\[
g|_{(B_i, B_i \cap C)} \colon (B_i, B_i\cap C) \to (X, A)
\]
 is relatively inessential. Now we use the homotopy $H$ to concude that  
 \[
 f|_{(B_i, B_i \cap C)} \colon (B_i, B_i\cap C) \to (X, A)
 \]
 is relatively inessential. Therefore, $ \relcat(f) \leq ~ \relcat(g)$. Similarly, the opposite inequality holds.
\end{proof}

Note that for identity map, $\id \colon (X, A) \to (X, A)$, $ \relcat(\id) =  \relcat(X, A)$. Moreover, we have the following from Definition \ref{d:relcat} and \ref{d:relcat_map}.

\begin{proposition}\label{prop_inequality_1}
Let $f \colon (B, C) \to (X, A)$ and $g \colon (Y, D) \to (B, C)$ be two maps. Then 
\begin{enumerate}[\rm (i)]
\item $\relcat(f) \leq \min\{\relcat(B, C), ~~ \relcat(X, A)\} $.
\item $\relcat(f \circ g) \leq \min\{\relcat(f), ~~ \relcat(g)\} $.
\end{enumerate}
\end{proposition}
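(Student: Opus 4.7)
The plan is to prove (ii) first and then obtain (i) as a consequence by specialising one of the two maps to an identity and invoking the remark $\relcat(\id_{(X,A)})=\relcat(X,A)$ stated before the proposition. The common mechanism for both parts is to transport a homotopy that witnesses relative inessentiality across the other map.

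For $\relcat(f\circ g)\le\relcat(g)$, I would take an open cover $\{Y_0,\ldots,Y_n\}$ of $Y$ equipped with homotopies $H_i\colon Y_i\times I\to B$ satisfying $H_i(-,1)=g|_{Y_i}$, $H_i(Y_i,0)\subseteq C$ and $H_i(Y_i\cap D,t)\subseteq C$ for all $t$, as supplied by $\relcat(g)\le n$. Post-composing with $f$ gives $f\circ H_i\colon Y_i\times I\to X$; because $f$ is a map of pairs (so $f(C)\subseteq A$), all three conditions transfer verbatim, witnessing that $(f\circ g)|_{(Y_i,Y_i\cap D)}$ is relatively inessential.

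For $\relcat(f\circ g)\le\relcat(f)$, I would take a cover $\{B_0,\ldots,B_m\}$ of $B$ with homotopies $K_j\colon B_j\times I\to X$ provided by $\relcat(f)\le m$, pull it back along $g$ to the cover $\{g^{-1}(B_j)\}$ of $Y$, and pre-compose each $K_j$ with $g\times\id_I$. The condition $g(D)\subseteq C$ gives $g(g^{-1}(B_j)\cap D)\subseteq B_j\cap C$, so the track condition $K_j(B_j\cap C,t)\subseteq A$ carries over to $K_j\circ(g\times\id_I)$ restricted to $(g^{-1}(B_j)\cap D)\times I$, and the endpoint conditions transfer similarly.

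Part (i) is then immediate from (ii) by writing $f=f\circ\id_{(B,C)}=\id_{(X,A)}\circ f$, yielding $\relcat(f)\le\relcat(\id_{(B,C)})=\relcat(B,C)$ and $\relcat(f)\le\relcat(\id_{(X,A)})=\relcat(X,A)$. The only non-bookkeeping point, and the one I expect to require the most care, is verifying that the intermediate-time track condition $H(B_i\cap C,t)\subseteq A$ built into Definition~\ref{def_rel_inessential} is genuinely preserved under both post-composition with $f$ and pull-back along $g$. This is precisely where the hypothesis that $f$ and $g$ are \emph{maps of pairs} (rather than plain continuous maps) is used, and once unpacked the verification is a direct definition chase rather than a topological obstacle.
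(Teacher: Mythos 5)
Your argument is correct, and it matches the paper's intent exactly: the paper offers no written proof, only the remark that the proposition ``follows from Definition~\ref{d:relcat} and~\ref{d:relcat_map},'' and your definition chase is precisely the content that remark suppresses. The two transport mechanisms you identify (post-composing a witnessing homotopy with $f$, and pre-composing one with $g\times\id_I$ after pulling the cover back along $g$) are the only steps needed, your use of $f(C)\subseteq A$ and $g(D)\subseteq C$ in checking the track condition is where the map-of-pairs hypothesis genuinely enters, and deducing (i) from (ii) via $\relcat(\id_{(X,A)})=\relcat(X,A)$ and $\relcat(\id_{(B,C)})=\relcat(B,C)$ is a clean organizational choice that costs nothing.
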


\begin{proposition}\label{prop_prod_ineq}
Let $f \colon (B, C) \to (X, A)$ and $g \colon (E, F) \to (Y, D)$ be two maps. Then 
$\relcat(f \times g) \leq \relcat(f) + \relcat(g) $.
\end{proposition}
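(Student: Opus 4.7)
The plan is to adapt the classical product argument for LS-category to the relative-category-of-a-map setting. Interpret $f \times g$ as the natural map of pairs
\[
(B \times E,\; (C \times E) \cup (B \times F)) \longrightarrow (X \times Y,\; (A \times Y) \cup (X \times D)),
\]
so that $\relcat(f \times g)$ is measured with respect to this pair structure. Writing $m = \relcat(f)$ and $n = \relcat(g)$, the goal is to produce a cover of $B \times E$ by $m + n + 1$ open sets on each of which $f \times g$ is relatively inessential.

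Start from open covers $\{B_i\}_{i=0}^m$ of $B$ and $\{E_j\}_{j=0}^n$ of $E$ realizing the two relative categories, with homotopies $H^f_i \colon B_i \times [0,1] \to X$ and $H^g_j \colon E_j \times [0,1] \to Y$ provided by Definition~\ref{def_rel_inessential}. On any single rectangle $B_i \times E_j$ the ``diagonal'' product homotopy
\[
H_{ij}\bigl((b, e),\, t\bigr) \;=\; \bigl(H^f_i(b, t),\, H^g_j(e, t)\bigr)
\]
witnesses relative inessentialness of the restriction of $f \times g$ to $B_i \times E_j$: at $t = 1$ it equals $f \times g$; at $t = 0$ it takes values in $A \times D \subseteq (A \times Y) \cup (X \times D)$; and if $b \in C$ (respectively $e \in F$) then $H^f_i(b, t) \in A$ (resp.\ $H^g_j(e, t) \in D$) for every $t$, so the subspace $(C \times E) \cup (B \times F)$ is sent into $(A \times Y) \cup (X \times D)$ throughout.

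Thus the product cover $\{B_i \times E_j\}$ of size $(m+1)(n+1)$ is already relatively categorical for $f \times g$, and the remaining task is to consolidate it into $m + n + 1$ sets indexed by the diagonal $k = i + j$. The obstacle is that rectangles along a single diagonal $i + j = k$ need not be disjoint, so the associated $H_{ij}$'s may disagree on overlaps. I would resolve this by the standard partition-of-unity shrinking trick (this is where some paracompactness hypothesis, implicit in the paper's setting, is invoked): choose partitions of unity $\{\alpha_i\}$ and $\{\beta_j\}$ subordinate to $\{B_i\}$ and $\{E_j\}$, and shrink each $B_i \times E_j$ to an open subset $B'_i \times E'_j$ (cut out by strict inequalities between the products $\alpha_i(b)\beta_j(e)$ and $\alpha_{i'}(b)\beta_{j'}(e)$ for $(i',j') \ne (i,j)$ with $i' + j' = k$) so that along each diagonal $i + j = k$ the shrunken rectangles are pairwise disjoint while their union still covers $B \times E$. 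Setting
\[
W_k \;=\; \bigsqcup_{i+j=k} B'_i \times E'_j \qquad (k = 0, 1, \ldots, m+n),
\]
the disjointness lets the restrictions $H_{ij}|_{B'_i \times E'_j}$ glue to a single continuous homotopy on $W_k$ that witnesses relative inessentialness of $(f \times g)|_{W_k}$, giving the required cover of cardinality $m + n + 1$. The main obstacle is precisely this partition-of-unity bookkeeping in the consolidation step, but it runs parallel to the proof of $\cat(X \times Y) \le \cat X + \cat Y$ and carries over with only notational changes once the homotopies $H_{ij}$ are in hand.
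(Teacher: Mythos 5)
Your proof is correct and is exactly the argument the paper has in mind: the paper's own ``proof'' is a one-line pointer to the classical inequality $\cat(X \times Y) \leq \cat X + \cat Y$ (\cite[Theorem 1.37]{CLOT}) ``with some modification,'' and you have supplied precisely those modifications --- the product-of-pairs framing of $f \times g$, the diagonal product homotopy on each rectangle $B_i \times E_j$, and the partition-of-unity consolidation along diagonals $i + j = k$. One cosmetic caveat: the shrunken pieces on a diagonal are generally not products ``$B'_i \times E'_j$'' but open subsets of $B_i \times E_j$ cut out by the strict-inequality conditions on the products $\alpha_i(b)\beta_j(e)$ --- the covering property then follows by taking the diagonal of smallest index among pairs achieving the global maximum of these products --- so the notation $B'_i \times E'_j$ is loose, but the argument is unaffected.
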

\begin{proof}
The proof is similar to the proof of $\cat(X \times Y) \leq \cat(X) + \cat(Y)$ \cite[Theorem 1.37]{CLOT} with some modification.
\end{proof}

We can ask the following question generalizing Ganea conjecture \cite{Gan2}.

\begin{question}\label{gen_Ganea_conj}
Let $\pt$ be a point in $S^n$. What are the pair of spaces $(X, Y)$ so that $$\relcat(X \times S^n, Y \times \{pt\}) = \relcat(X, Y) + 1?$$
\end{question}
We note that when $Y$ is a point in $X$, then Question \ref{gen_Ganea_conj} is known as Ganea conjecture, and counter example of this conjecture was first given by Iwase \cite{Iwa98}. 

\forget
\begin{thm}
 If $X$ is a CW-complex and $Y$ is a subcomplex of $X$, then $\relcat(X \times S^n, Y \times \{p\}) = \relcat(X, Y) + 1$.
\end{thm}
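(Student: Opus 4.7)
The plan is to prove the two inequalities separately. For the upper bound, apply Proposition \ref{prop_prod_ineq} to the identities $\id_X \colon (X,Y) \to (X,Y)$ and $\id_{S^n} \colon (S^n,\{p\}) \to (S^n,\{p\})$ viewed as maps of pairs:
$$
\relcat(X \times S^n, Y \times \{p\}) \leq \relcat(\id_X) + \relcat(\id_{S^n}) = \relcat(X,Y) + \relcat(S^n,\{p\}).
$$
Since $\relcat(S^n,\{p\}) = \cat(S^n) = 1$, this yields $\relcat(X \times S^n, Y \times \{p\}) \leq \relcat(X,Y) + 1$.

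For the lower bound, set $k = \relcat(X,Y)$. The strategy is to exhibit $k+1$ cohomology classes on $X \times S^n$ that vanish on $Y \times \{p\}$ and whose cup product is nonzero, then invoke the cohomological lower bound of Theorem \ref{thm:cohom_low_bnd}. Choose $u_1, \ldots, u_k \in H^*(X)$ with $u_i|_Y = 0$ and $u_1 \cup \cdots \cup u_k \neq 0$; these classes realise $\relcat(X,Y) \geq k$ via relative cup length. Let $v \in H^n(S^n)$ be a generator, so $v|_{\{p\}} = 0$ for $n \geq 1$. Pulling back along the projections $\pi_1, \pi_2$ out of $X \times S^n$, each $\pi_1^*(u_i)$ and $\pi_2^*(v)$ restricts to zero on $Y \times \{p\}$, and by K\"unneth
$$
\pi_1^*(u_1) \cup \cdots \cup \pi_1^*(u_k) \cup \pi_2^*(v) = (u_1 \cdots u_k) \times v \neq 0.
$$
Theorem \ref{thm:cohom_low_bnd} then delivers $\relcat(X \times S^n, Y \times \{p\}) \geq k+1$, as required.

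The CW-subcomplex hypothesis ensures that $(X,Y)$ is a good pair, so no technical issue arises in the K\"unneth manipulation. The genuine obstacle is that this lower-bound argument requires the cohomological bound of Theorem \ref{thm:cohom_low_bnd} to already be sharp on $(X,Y)$. Specialising to $Y = \{\pt\}$ reduces the statement to $\cat(X \times S^n) = \cat(X) + 1$, the classical Ganea conjecture, which is known to fail by Iwase's counterexample \cite{Iwa98}. A proof in the stated generality must therefore go beyond cup length --- for instance by deploying a category-weight style invariant that is multiplicative across products --- or else restrict the class of admissible pairs. Isolating such a class is exactly the content of Question \ref{gen_Ganea_conj}, which I expect to be the main difficulty in turning the above sketch into a complete proof.
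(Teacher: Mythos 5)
You have correctly diagnosed the essential problem: the statement as written is \emph{false}. Taking $Y$ to be a single point (always a subcomplex of a CW complex $X$) gives $\relcat(X, Y) = \cat(X)$ and $\relcat(X \times S^n, Y \times \{p\}) = \cat(X \times S^n)$, so the claimed equality becomes the Ganea conjecture $\cat(X \times S^n) = \cat(X) + 1$, which fails for Iwase's counterexamples \cite{Iwa98}. Moreover, the paper does not actually assert this theorem: the statement is enclosed in a macro that discards its contents at compile time, with the proof placeholder reading ``I am thinking about it.'' What the paper does include is Question~\ref{gen_Ganea_conj}, which asks for which pairs $(X,Y)$ the equality can hold, immediately noting the Iwase counterexample for $Y$ a point. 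There is therefore no paper proof to compare against, and your closing paragraph shows you recognised this.

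On the details you wrote out: the upper bound via Proposition~\ref{prop_prod_ineq} applied to $\id_{(X,Y)}$ and $\id_{(S^n,\{p\})}$ is fine, under the natural convention that the product of pair-maps carries the pair $(B \times E, C \times F)$. The lower-bound step, as you yourself flag, has a genuine gap: from $\relcat(X,Y) = k$ you cannot conclude that classes $u_1, \ldots, u_k \in H^*(X)$ exist that vanish on $Y$ with $u_1 \cup \cdots \cup u_k \neq 0$. Theorem~\ref{thm:cohom_low_bnd} gives only the one-sided inequality ${\rm nil}\,{\rm Im}(f^*) \leq \relcat(f)$; it does not assert that relative cup length computes $\relcat$, and the Iwase examples show this gap is unbridgeable in general. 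Restricting to pairs where cup length (or a category-weight refinement) is sharp is precisely the content of Question~\ref{gen_Ganea_conj}, as you anticipated.
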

\begin{proof}
I am thinking about it.
\end{proof}
\forgotten

\m In the rest of this section we give a cohomological lower bound of $\mbox{relcat}(f)$. If $R$ be a commutative ring, then the {\it nilpotency index} of $R$ is the non-negative integer $n$ such that $R^n \neq 0$ but $R^{n+1} =0$ and it is denoted by $\mbox{nil} R$.  Let $f \colon (B, C) \to (X,A)$ be a map of pairs, and let $f^* \colon H^*(X, A) \to H^*(B, C)$ be the induced homomorphism. 

\begin{thm}\label{thm:cohom_low_bnd}
${\rm nil} {\rm Im }(f^*) \leq \relcat(f)$.
\end{thm}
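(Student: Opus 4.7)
The plan is to mimic the classical cup-length bound for $\cat(f)$ and for $\cat(X,A)$, adapted to the pair-of-maps setting. Set $n=\relcat(f)$, choose an open cover $B_0,\ldots ,B_n$ of $B$ given by \defref{d:relcat_map}, and let $j_i\colon (B_i, B_i\cap C)\hookrightarrow (B,C)$ be the inclusion of pairs.

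The first key step is to translate the geometric assumption ``$f|_{(B_i, B_i\cap C)}$ is relatively inessential'' into a cohomological vanishing. By \defref{def_rel_inessential}, the map of pairs $f|_{B_i}\colon (B_i, B_i\cap C)\to (X,A)$ is homotopic through maps of pairs to a map landing in $(A,A)$. Hence the induced map
\[
(f|_{B_i})^{*}\colon H^{*}(X,A)\longrightarrow H^{*}(B_i,B_i\cap C)
\]
factors through $H^{*}(A,A)=0$, so it is zero. Consequently, for any $v_i\in H^{*}(X,A)$, setting $u_i=f^{*}(v_i)\in\image(f^{*})$, we get $j_i^{*}(u_i)=(f|_{B_i})^{*}(v_i)=0$.

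The second key step is to lift each $u_i$ to a relative class against a bigger subspace. Let $W_i=C\cup B_i$. From the long exact sequence of the triple $(B, W_i, C)$ together with the excision isomorphism $H^{*}(W_i, C)\cong H^{*}(B_i, B_i\cap C)$ (valid because $B_i$ is open), the vanishing $j_i^{*}(u_i)=0$ yields a class $\tilde u_i\in H^{*}(B, W_i)$ mapping to $u_i$ under $H^{*}(B,W_i)\to H^{*}(B,C)$.

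Finally I would use the relative cup product. Given $n+1$ classes $u_0,\ldots ,u_n\in\image(f^{*})$ with lifts $\tilde u_i\in H^{*}(B,W_i)$, their cup product lives in
\[
H^{*}\bigl(B,\ W_0\cup W_1\cup\cdots\cup W_n\bigr)=H^{*}(B,\ C\cup B_0\cup\cdots\cup B_n)=H^{*}(B,B)=0,
\]
since the $B_i$ cover $B$. Naturality of the cup product under the maps of pairs $(B,W_i)\to (B,C)$ shows that this class maps to $u_0\cup u_1\cup\cdots\cup u_n\in H^{*}(B,C)$, which is therefore zero. Since this holds for any $n+1$ classes in $\image(f^{*})$, we obtain $\mathrm{nil}\,\image(f^{*})\le n=\relcat(f)$.

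The main obstacle is the excision/cup-product step: one needs the open cover hypothesis (already in the definition) together with the standard fact that relative cup products on an excisive pair of subspaces assemble into a product $H^{*}(B,W_0)\otimes\cdots\otimes H^{*}(B,W_n)\to H^{*}(B,W_0\cup\cdots\cup W_n)$. Both are routine, but the argument rests entirely on the correct bookkeeping of these pairs. The translation of ``relatively inessential'' into the vanishing of $(f|_{B_i})^{*}$ on relative cohomology is the conceptual content; everything else is the standard sectional/relative category cup-length machine.
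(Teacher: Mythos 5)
Your proof follows the same overall strategy as the paper's: translate ``relatively inessential'' into the vanishing of the restricted map on relative cohomology, lift each class to a pair with a larger excluded subspace, and then observe that the product of the lifts lives in $H^*(B,B)=0$. The conceptual heart --- that $g_i^* = (f|_{B_i})^* = 0$ on $H^*(X,A)$ because $g_i$ is pair-homotopic to a map into $(A,A)$ --- is exactly the paper's first move.

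Where you diverge is in the bookkeeping of the lift. You lift $u_i\in H^*(B,C)$ along the long exact sequence of the triple $(B,\,C\cup B_i,\,C)$, which forces you to invoke the excision isomorphism $H^*(C\cup B_i,\,C)\cong H^*(B_i,\,B_i\cap C)$. This is \emph{not} automatic just ``because $B_i$ is open'': the excisive-couple condition requires $C\cup B_i=\mathrm{int}(B_i)\cup\mathrm{int}_{C\cup B_i}(C)$, and $C\setminus B_i$ need not lie in $\mathrm{int}_{C\cup B_i}(C)$ for an arbitrary subspace $C$ (take $B=\mathbb{R}$, $C=\{0\}$, $B_i=(0,1)$: then $\mathrm{int}_{[0,1)}(\{0\})=\emptyset$). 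The same caveat recurs in your cup-product step, where you need the family $\{W_i\}$ to be excisive, and the $W_i=C\cup B_i$ are not open even though the $B_i$ are. The paper sidesteps all of this by instead using the long exact sequence of the triple $(B,\,B_i,\,B_i\cap C)$, lifting $\bar f_i^*(\alpha_i)\in H^*(B,\,B_i\cap C)$ to $\gamma_i\in H^*(B,B_i)$; since the $B_i$ are open, every excisive-couple requirement for the iterated relative cup product is then satisfied for free, and one concludes by naturality via the map $q\colon (B,C)\to (B,B)$. Your version is correct for CW (or otherwise tame) pairs where excision applies, but as written the ``valid because $B_i$ is open'' justification is a genuine gap; replacing your triple with the paper's $(B,\,B_i,\,B_i\cap C)$ removes the gap without changing the argument's shape.
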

\begin{proof}
%This is a standard argument. 
Let $\relcat(f) =k$ and  $g_i \colon (B_i, B_i \cap C) \to (X, A)$ relatively inessential map for $i=0, 1, \ldots, k$ such that $g_i$ is a restriction of $f$ and $B_0, B_1, \ldots, B_k$ covers $B$. For the triple $(B, B_i, B_i \cap C)$, we have the following long exact sequence
\[
\cdots \to H^*(B, B_i) \xrightarrow{q_i^*} H^*(B, B_i \cap C) \xrightarrow{\iota_i^*} H^*(B_i, B_i \cap C)  \to \cdots .
\]
 Also we have the following commutative diagrams induced from natural inclusions and some restrictions of $f$.
\[
\begin{tikzcd}[column sep=1.5em]
 & H^*(X, A) \arrow{dr}{g_i^*} \arrow[swap]{dl}{\bar{f}^*_i} \\
H^*(B, B_i\cap C) \arrow{rr}{\iota_i^*} &&   H^*(B_i, B_i \cap C) 
\end{tikzcd}
\]
and 
\[
\begin{tikzcd}[column sep=1.5em]
 & H^*(X, A) \arrow{dr}{\bar{f}_i^*} \arrow[swap]{dl}{f^*} \\
H^*(B, C) \arrow{rr}{\bar{\iota}_i^*} &&   H^*(B, B_i \cap C). 
\end{tikzcd}
\]
 
Suppose $\beta_0, \beta_1, \ldots, \beta_k$ belong to ${\rm Im}(f)$. Then $\beta_i = f^*(\alpha_i)$ for some $\alpha_i \in H^*(X, A)$ and $i \in \{0, \ldots, k\}$. Since $g_i$ is inessential, then $g^*$ is trivial. So $\iota^*_i(\bar{f}^*_i(\alpha_i))=0 \in H^*(B_i, B_i \cap C) $.  Thus $\bar{f}^*_i(\alpha_i)= q_i^*(\gamma_i)$ for some $\gamma_i \in H^*(B, B_i)$ and $i \in \{0, \ldots, k\}$. By relative cohomology product rule, we get $\gamma_0 \cup \gamma_1 \cup \cdots \cup \gamma_k \in H^*(B, B) =0$, and $\bar{f}^*_0(\alpha_0) \cup \bar{f}^*_1(\alpha_1) \cup \ldots \cup \bar{f}^*_k(\alpha_k) \in H^*(B, C)$. So using \cite[Proposition 3.19]{Pav} and the map $q \colon (B, C) \to (B, B)$ we can get
\begin{align*}
\beta_0 \cup \beta_1 \cup \cdots \cup \beta_k & =  f^*(\alpha_0) \cup f^*(\alpha_1) \cup \cdots \cup f^*(\alpha_k) \\ 
& =  \bar{f}^*(\alpha_0) \cup \bar{f}^*(\alpha_1) \cup \cdots \cup \bar{f}^*(\alpha_k)  \\ 
& =  q_0^*(\gamma_0) \cup q_1^*(\gamma_1) \cup \cdots \cup q_k^*(\gamma_k)\\ 
& = q^*(\gamma_0 \cup \gamma_1 \cup \cdots \cup \gamma_k) \\
&= 0.\nonumber
\end{align*}
 This proves the conclusion. 
\end{proof}

%======================================================

\section{Topological complexity of a map}\label{sec_top_comp_map}
In different context `topological complexity of a map' has been studied in  the work of \cite{Pav}, \cite{ RaDe} and \cite{MuWu}.  In this section, we introduce the concept of `higher topological complexity' of a map in a different way and show that it is a homotopy invariant. Then we compare it with the previous ones and study some properties of this new invariant.  Consider the free path  fibration 
\begin{equation}\label{pi}
\pi  \colon X^I \to X \times X, \quad \pi(\ga)=(\ga(0), \ga(1)), \, \ga: I\to X.
\end{equation}

 Fatber~\cite{Far} defined the topological complexity $\TC(X)$ of a space $X$ as the sectional category of $\pi$, and showed a nice application of this notion to robot motion planning,~\cite{Far2}. Later Rudyak~\cite{Rud10}, see also~\cite{BGRT} introduced  the “higher analogues” of topological complexity (also related to robotics, by the way). Let us recall the definition. 
 
\m Given a space $X$, consider the fibration
\begin{equation}\label{pin}
\begin{aligned}
\pi_n: X^{I}&\to X^n,\\
\pi_n(\ga)=&\left(\ga(0), \ga\left(\frac{1}{n-1}\right), \ldots, \ga\left(\frac{n-2}{n-1}\right), \ga(1)\right)
\end{aligned}
\end{equation}
where $\ga\in X^I$. 
 
 \begin{df}\label{def_higher_tcn}
A {\it higher}, or {\it sequential  topological complexity} of
order $n$ of a space $X$ (denoted by $\TC_n(X)$) is the sectional category of
$\pi _n$. So, $\TC_n(X) = \secat(\pi_n)$.
\end{df}
 
Note that $\TC_2$ coincides with the invariant $\TC$ introduces by Farber.

\m Recall that the LS category of a map $f$ have two important properties: $\cat(f)$ is a homotopy invariant of $f$, and $\cat(\id_X)=\cat(X)$.

Since $\cat$ is a closed relative of $\TC$, it seems reasonable and useful to introduce topological complexity $\TC(f)$ of a map $f$ having, in particular, properties that are similar to two above mentioned ones. 

Let $\Delta_n \colon X \to X^n$ be the diagonal of the product $X^n$ for $n \geq 2$.

\begin{definition}\label{def:TC_map}
Let $X$ and $Y$ be path connected spaces, and $f \colon X \to Y$ be a map. Let $\phi_n := f \times \cdots \times f$ ($n$-times) and 
\[
\ov\phi_n \colon (X^n, \Delta_n (X)) \to (Y^n, \Delta_n(Y))
\]
be the induced maps of pairs. 
We define the $n$-th (higher) topological complexity of $f$, denoted by $\TC_n(f)$, as $\TC_n(f) :=\relcat\ov\phi_n$.
\end{definition}  

 Now one can see thet  $\TC_n$ turns out to be a functor on the category of topological spaces.

\m Previously we had three following version of `topological complexity' of a map: the first one that is given by Pave\v{s}i\'{c}, \cite{Pav} , we denote it by $\TCP$, the second one that is given by Rami and Derfoufi, \cite{RaDe}, we denote it by $\TCRD$, and third one is by Murillo and Wu \cite{MuWu}, we denoted by $\TCMW$. Here we have the equalities 
\[
\TCP(\id_X)=\TC(X)=\TC(\id_X)=\TCRD(\id_X)== \TCMW(\id_X).
\]
 But neither $\TCP$ nor $\TCRD$ are homotopy invariant, see Example \ref{example_not_homotopy_pav} and \ref{example_not_homotopy_rade}. Next we compare them with our definition of $\TC_2(f)$.

\forget \m Next we compare our definition of $\TC_2(f)$ with two previous notions known as ``topological complexity of a map" of $\TCP$~\cite[Section 2]{Pav} and $\TCRD$~\cite[Definition 1]{RaDe}.
\forgotten

\m First we recall the definition of topological complexity $\TCP(f)$ of a map $f$. Let $q \colon E \to B$ be a surjective map. The sectional number, denoted by $\mbox{sec}(q)$, of $q$ is the smallest length $n$ of the filtrations $$\emptyset = V_0 \subset V_1 \subset \cdots \subset V_n=B$$ such that there is a section of $q^{-1}(V_i - V_{i-1}) \to V_i - V_{i-1}$ for $i=1, \ldots, n$. If there is no such integer then $\mbox{sec}(p)=\infty$. We note that sectional number and sectional category of $q$ are equal if $q$ is a fibration. Let $X, Y$ be path connected spaces and  $f \colon X \to Y$ a surjective map.  Consider the fibration 
$
\pi  \colon X^I \to X \times X$ as in \eqref{pi}.
It induces a continuous map $\pi_f \colon X^I \to X \times Y$ by $\pi_f = (\id \times f ) \circ \pi$. Now, the topological complexity $\TCP(f)$ of $f$ is defined as  the sectional number of $\pi_f$, that is 
\begin{equation}
\label{def_tc_pav}
\TCP(f) := \mbox{sec}(\pi_f).
\end{equation}
We note that \cite[Corollary 3.9]{Pav} says that $\TCP$ is a fiber homotopy invariant.

\begin{example}\label{example_not_homotopy_pav}
Let $f, g \colon [0, 3] \to [0,2]$ be two continuous functions defined by the following.
\begin{align*}
f (x) = 
     \begin{cases}
       x &\quad\text{if}~ 0 \leq x \leq 1 \\
       1 & \quad \text{if}~1 \leq x \leq 2 \\
       x-1 & \quad\quad \text{if}~ 2 \leq x \leq 3
     \end{cases}
\end{align*}
and 
\begin{align*}
g (x) = \frac{2x}{3}  \quad\quad \text{if}~ 0 \leq x \leq 3.
     \end{align*}
     
\m Hence $f$ has no continuous section, and $g$ has a section. So  $\TCP(f) > 0$ and $\TCP(g)=0$. But $f$ and $g$ are homotopic by $t f +(1-t)g, t \in [0, 1]$.
Therefore $\TCP$ is not a homotopy invariant. This is mentioned in \cite{Pav}, but an explicit example was not given. 
\end{example}

\m Now we recall the topological complexity $\TCRD(g)$ of a map  $g$. Let $Z$ be a path connected space and $g \colon Z \to W$ be a map. Then the fiber space $Z \times_{W} Z :  = (g \times g)^{-1}(\Delta W)$ is a subset of $Z \times Z$. Let 
\[
\pi^g \colon \pi^{-1}(Z \times_{W} Z) \to Z \times_W Z.
\] 
be the fibration induced from $\pi: Z^I \to Z\ts Z$  by the inclusion 

\[
Z \times_{W} Z \to Z\ts Z.
\]

Then $\TCRD(g)$ is the sectional category of $\pi^g$, that is

\begin{equation}\label{def_tc_rade}
\TCRD(g) := \sec(\pi^g).
\end{equation}
 
\m It turns out that $\TCRD$ is also a fiber homotopy invariant \cite[Corollary 7]{RaDe}. We also  note that this definition is a particular case of ``relative topological complexity" studied in \cite{Far2}. 

\begin{example}\label{example_not_homotopy_rade}
In this example we show that $\TCRD$, ``topological complexity of a map" defined in \cite{RaDe} is not a homotopy invariant. Let $X$ be a path connected topological space and $CX$ the cone on $X$ with apex $a$. Write $CX=X\ts[0,1]/X\ts\{1\}$. Define $\iota, \mathfrak{c} \colon X \to CX$ where $\iota$ is the inclusion $\iota(x)=(x,0)$ and  $\mathfrak{c}(X)=a$. Then  $\TCRD(\mathfrak{c}) = \TC(X)$ (Farber's topological complexity). 

For the map $\iota$, we have $X \times_{CX} X = \Delta X$. Then $\pi^{-1}(\Delta X)$ is the free loop space on $X$. So constant maps induce a section of $\pi^{\iota}$. Therefore $\TCRD(\iota) =1 \neq \TC(X)=\TCRD (\mathfrak c)$ in general.  It remain to note that $\iota$ and $\mathfrak{c}$ are homotopic.
\end{example}

\m In contrast to both definitions $\TCP$ and $\TCRD$ of the ``topological complexity of a map", it is clear that our definition is a homotopy invariant, a topologist primary interest. 

Now we show that $\TCP\neq \TC\neq \TCRD$ explicitly. Let $X$ be a contractible space and $Y$ a non-contractible space. Let $f \colon X  \to Y$ be a surjective fibration, for example $\exp \colon [0, 1] \to S^1$. Then by \cite[Proposition 3.2]{Pav} ${\TCP(f)} \geq \cat(Y) \geq 1$.  But our definition gives $\TC(f) = 0$. Therefore these two are different.

\vspace{0.1 cm}   On the other hand, if $g$ is a constant map then $\TCRD(g)$ is the topological complexity $\TC(X)$ of $X$ which is strictly greater than $\cat(X)$ in general.

\m Now we recall the definition of topological complexity $\TCMW(f)$ of (work) map.  Let $f \colon X \to Y$ be a continuous map. Then $\TCMW(f)$ is the least integer $n \leq \infty$ such that there exist open subsets $U_0, \ldots, U_n$ of $X \times X$ on each of which there is a map $s_i \colon U_i \to X$ satisfying $(f\times f) \circ \Delta_2 \circ s_i \simeq f \times f|_{U_i}$ for $i=0, \ldots, n$. We also note that Scott  defines topological complexity $\TCS(f)$ of a map in \cite[Definition 3.1]{Sco}, and he shows that these two definitions are equivalent. 

Let $(a,b) \in U_i$. So $s_i(a,b) \in X$. Then $(f\times f) \circ \Delta_2 \circ s_i (a, b) \in \Delta_2(Y)$.  Yet, we cannot conclude that $ f \times f|_{U_i} \colon (U, U \cap \Delta_2(X))\to (Y, \Delta_2(Y))$ is inessential in the sense of Definition \ref{def_rel_inessential}. However, if we assume  $ f \times f|_{U_i} $ is inessential, then it is homotopict to a map $U_i \to \Delta_2(Y)$. Thus by \cite[Theorem 3.4]{Sco} we get the following.   
 \begin{equation}\label{TC_RS_MW}
 \TCMW(f) = \TCS(f) \leq  \TC_2(f).
 \end{equation}
 %{\color{red}
 We do not know yet if $\TC_2(f)=\TCMW(f)$.
% Therefore, $\TC_2(f)$ gives a reasonably tight upper bound for $\TCMW(f)$. 
  Nevertheless, we give higher analogue of the topological complexity of a map which generalizes higher topological complexity of a space.

\m In the remaining we study some properties, lower bound and upper bound of $\TC_n(f)$. %  our {\it higher topological complexity of a map}.

\begin{proposition}\label{prop_tcxy}
We have $\TC_n(f) \leq \min \{\TC_n(X), \TC_n(Y)\}$ for all  maps $f \colon X \to Y$.
\end{proposition}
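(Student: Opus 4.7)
The plan is to prove the two bounds $\TC_n(f)\leq\TC_n(X)$ and $\TC_n(f)\leq\TC_n(Y)$ separately, each by factoring through the relative LS category of an appropriate pair of spaces.

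Since $\TC_n(f) = \relcat(\ov\phi_n)$ and $\ov\phi_n$ is a map of pairs $(X^n, \Delta_n(X)) \to (Y^n, \Delta_n(Y))$, Proposition~\ref{prop_inequality_1}(i) gives
\[
\relcat(\ov\phi_n) \leq \min\{\relcat(X^n, \Delta_n(X)),\ \relcat(Y^n, \Delta_n(Y))\}.
\]
So it suffices to prove that for any path-connected space $Z$,
\[
\relcat(Z^n, \Delta_n(Z)) \leq \TC_n(Z) = \secat(\pi_n).
\]

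For this bound, I would suppose $\TC_n(Z) = k$ and fix an open cover $V_0, \ldots, V_k$ of $Z^n$ together with sections $\sigma_j \colon V_j \to Z^I$ of $\pi_n$ (see~\eqref{pin}). For each $j$, I would define
\[
K_j \colon V_j \times [0,1] \to Z^n, \qquad K_j(\mathbf{z}, t) = \Bigl(\sigma_j(\mathbf{z})(0),\ \sigma_j(\mathbf{z})(\tfrac{t}{n-1}),\ \ldots,\ \sigma_j(\mathbf{z})(t)\Bigr).
\]
Since $\sigma_j$ is a section of $\pi_n$, evaluating at $t = 1$ recovers the inclusion $V_j \hookrightarrow Z^n$, while at $t = 0$ all $n$ coordinates agree, so $K_j(V_j, 0) \subseteq \Delta_n(Z)$. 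Hence each $V_j$ is a relative categorical set for the pair $(Z^n, \Delta_n(Z))$, establishing the required bound. Combining with the display above and applying this to both $Z = X$ and $Z = Y$ yields the statement.

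The step I expect to be the main obstacle is verifying the full condition of Definition~\ref{d:relcat_map}, namely that $K_j$ preserves the diagonal throughout the parameter interval: $K_j(V_j \cap \Delta_n(Z), t) \subseteq \Delta_n(Z)$ for every $t \in [0,1]$. This can fail for the naive $K_j$ above because $\sigma_j$ at a diagonal point $(z, \ldots, z)$ is a loop at $z$ that need not be constant, so the intermediate tuple may drift off $\Delta_n(Z)$. The fix I would use is to perturb $\sigma_j$ on a neighborhood of $V_j \cap \Delta_n(Z)$, via the homotopy extension property of the pair $(Z^n, \Delta_n(Z))$, so that it agrees along the diagonal with the globally defined constant-path section $z \mapsto c_z$; this modification leaves the open cover unchanged, so the estimate $\relcat(Z^n, \Delta_n(Z)) \leq k$ still holds.
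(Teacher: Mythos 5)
Your high-level route is the same as the paper's: both first apply Proposition~\ref{prop_inequality_1}(i) to get
\[
\TC_n(f)=\relcat(\ov\phi_n)\leq\min\{\relcat(X^n,\Delta_n(X)),\relcat(Y^n,\Delta_n(Y))\},
\]
and then bound $\relcat(Z^n,\Delta_n(Z))$ by $\TC_n(Z)$. The difference is that the paper's one-line proof cites \cite[Proposition 3.7]{BS2} for the second step, whereas you supply an explicit construction of the required homotopies $K_j$ out of the sections $\sigma_j$ of $\pi_n$. That construction is correct and is exactly the standard one (cf.\ the $n=2$ case in Proposition~\ref{prop_lub_marz} from \cite{LuMa}); since the verification is short, giving it in full is arguably preferable to the external citation.

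On your ``main obstacle'' paragraph, a caution. Definition~\ref{d:relcat}, which governs $\relcat(Z^n,\Delta_n(Z))$, asks only that $K_j(-,1)$ be the inclusion and $K_j(V_j,0)\subseteq\Delta_n(Z)$; it does \emph{not} ask that $K_j$ keep $V_j\cap\Delta_n(Z)$ inside the diagonal for intermediate $t$. So the naive $K_j$ already suffices for the inequality you actually need, and no fix is required at this step. Moreover, the HEP fix you sketch is not as innocent as it looks: the fibre of $\pi_n$ over a diagonal point has the homotopy type of $(\Omega Z)^{n-1}$, which is not contractible, so $\sigma_j|_{V_j\cap\Delta_n(Z)}$ need not be deformable \emph{through sections} to the constant-path section, and a deformation through arbitrary maps to $Z^I$ destroys the section property on which the formula for $K_j$ relies. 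If a diagonal-preserving homotopy were truly needed (for instance, to make $\relcat(Z^n,\Delta_n(Z))$ agree with $\relcat(\id_{(Z^n,\Delta_n(Z))})$ in the sense of Definition~\ref{d:relcat_map}), a more careful argument would be required. Fortunately, for the present Proposition it is not.
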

\begin{proof}
This follows from Proposition \ref{prop_inequality_1} and \cite[Proposition 3.7]{BS2}.
\end{proof}
We note that the sectional category of a map does not exceed the LS-category of the the codomain. Thus   $$\TC_n(f) \leq \min \{n \cat(X), n \cat(Y)\}.$$ 
A part of the following result generalizes \cite[Proposition 3.3]{Rud10}.

\begin{proposition}\label{prop_cat_tc}
Let  $f \colon X \to Y$ be a map. Then $$\cat(f) \leq \TC_n(f) \leq \TC_{n+1}(f) \leq \cat(f^{n+1}) \leq (n+1) \cat(f)$$ for all  $n$. 
\end{proposition}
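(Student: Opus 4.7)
The plan is to verify the four sub-inequalities separately. The last, $\cat(f^{n+1})\le(n+1)\cat(f)$, follows immediately by iterated application of the product formula (1.1): $\cat(f\times f^n)\le\cat(f)+\cat(f^n)$ and induction on $n$.

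For $\TC_n(f)\le\TC_{n+1}(f)$, I would take an open cover $B_0,\dots,B_k$ of $X^{n+1}$ together with relatively inessential homotopies $H_i\colon B_i\times I\to Y^{n+1}$ witnessing $\TC_{n+1}(f)\le k$. The inclusion $j\colon X^n\to X^{n+1}$ given by $(x_1,\dots,x_n)\mapsto(x_1,\dots,x_n,x_n)$ sends $\Delta_n(X)$ into $\Delta_{n+1}(X)$, and the projection $p\colon Y^{n+1}\to Y^n$ onto the first $n$ factors sends $\Delta_{n+1}(Y)$ onto $\Delta_n(Y)$. Setting $C_i:=j^{-1}(B_i)$ and $H_i':=p\circ H_i\circ(j\times\id_I)$, a direct check using the identity $p\circ\phi_{n+1}\circ j=\phi_n$ shows that $H_i'$ is a relatively inessential homotopy for $\ov\phi_n|_{(C_i,C_i\cap\Delta_n(X))}$, giving $\TC_n(f)\le k$.

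For $\cat(f)\le\TC_n(f)$ (assuming $n\ge 2$), I would take a cover $B_0,\dots,B_\ell$ with homotopies $H_i$ witnessing $\TC_n(f)\le\ell$, fix a basepoint $x_0\in X$, and pull back via the slice $s\colon X\to X^n$, $s(x)=(x,x_0,\dots,x_0)$. The preimages $W_i:=s^{-1}(B_i)$ form an open cover of $X$. Writing $H_i(s(x),t)=(H_i^1(x,t),\dots,H_i^n(x,t))$, the condition $H_i(s(x),0)\in\Delta_n(Y)$ forces all coordinates to share a common value $g_i(x)$ at $t=0$, while $H_i(-,1)=\phi_n$ gives $H_i^1(x,1)=f(x)$ and $H_i^2(x,1)=f(x_0)$. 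Then $H_i^1$ is a homotopy in $Y$ from $g_i$ to $f|_{W_i}$ and $H_i^2$ is a homotopy from $g_i$ to the constant $f(x_0)$; concatenating witnesses $f|_{W_i}$ nullhomotopic, so $\cat(f)\le\ell$.

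The remaining inequality $\TC_{n+1}(f)\le\cat(f^{n+1})$ is the main obstacle. Given a cover $V_0,\dots,V_k$ with nullhomotopies $K_i\colon V_i\times I\to Y^{n+1}$ of $\phi_{n+1}|_{V_i}$ witnessing $\cat(f^{n+1})\le k$, path-connectedness of $Y$ permits splicing a path to assume $K_i(-,1)=(y_0,\dots,y_0)\in\Delta_{n+1}(Y)$, and time-reversal yields a candidate homotopy $H_i$ with $H_i(-,1)=\phi_{n+1}|_{V_i}$ and $H_i(V_i,0)\subseteq\Delta_{n+1}(Y)$. The remaining condition $H_i(V_i\cap\Delta_{n+1}(X),t)\subseteq\Delta_{n+1}(Y)$ for all $t$ is the technical heart of the argument: my plan is to observe that projecting $K_i$ restricted to $(V_i\cap\Delta_{n+1}(X))\times I$ to any single coordinate produces a nullhomotopy of $f|_{V_i\cap\Delta}$ in $Y$, which post-composed with $\Delta_{n+1}\colon Y\hookrightarrow Y^{n+1}$ yields a nullhomotopy of $\phi_{n+1}|_{V_i\cap\Delta}$ staying inside $\Delta_{n+1}(Y)$, and then to invoke the homotopy extension property for the pair $(V_i,V_i\cap\Delta_{n+1}(X))$ (under standard niceness hypotheses on $X$ and $Y$) to replace $H_i$ by a homotopy agreeing with this diagonal-valued one on the diagonal while retaining the endpoints, producing the required relatively inessential restriction.
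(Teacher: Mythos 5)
Your decomposition into four sub-inequalities matches the paper's, and your arguments for the first, second, and fourth are correct; the third is where both you and the paper leave something to be desired, though you are more honest about it. Let me go through them.

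For $\TC_n(f)\le\TC_{n+1}(f)$, your inclusion $j(x_1,\dots,x_n)=(x_1,\dots,x_n,x_n)$ is the right choice precisely because it carries $\Delta_n(X)$ into $\Delta_{n+1}(X)$, so a homotopy witnessing that $\ov\phi_{n+1}|_{(B_i,B_i\cap\Delta_{n+1}(X))}$ is relatively inessential restricts and projects to one witnessing the same for $\ov\phi_n|_{(j^{-1}(B_i),\,j^{-1}(B_i)\cap\Delta_n(X))}$, including the third condition of Definition \ref{def_rel_inessential}. The paper instead restricts to the slice $A=X^n\times\{x\}$ and projects off the last coordinate. That slice meets $\Delta_{n+1}(X)$ in only the single point $(x,\dots,x)$, so the given homotopy provides no control over $\Delta_n(X)\times\{x\}$ during the deformation; the third condition is not automatically inherited there, and your version repairs this. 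For $\cat(f)\le\TC_n(f)$ your slice $s(x)=(x,x_0,\dots,x_0)$ and the concatenation of the two coordinate homotopies is correct (and here only the first two conditions of relative inessentiality are used, so nothing is lost). The last inequality by iterating \eqref{eq_prod} is exactly the paper's argument.

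For $\TC_{n+1}(f)\le\cat(f^{n+1})$, you correctly identify this as the genuine content. A nullhomotopy $K_i$ of $\phi_{n+1}|_{V_i}$ gives, after reparametrisation and a path in $Y$, a homotopy with $H_i(-,1)=\phi_{n+1}|_{V_i}$ and $H_i(V_i,0)\subseteq\Delta_{n+1}(Y)$, but no control over $H_i(V_i\cap\Delta_{n+1}(X),t)$ for intermediate $t$; the paper's claim that this ``follows from their definitions'' glosses over exactly this point. Your idea of replacing the restriction to the diagonal by $\Delta_{n+1}\circ K_i^1$ is the right move, and the discrepancy between the two is concentrated on $(V_i\cap\Delta_{n+1}(X))\times I$ while the endpoints $t=0,1$ already agree (after arranging $K_i(-,1)\equiv(y_0,\dots,y_0)$). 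However, as written your HEP step does not yet close the gap: deformation-retracting $V_i\times I$ onto $(V_i\times\{1\})\cup((V_i\cap\Delta_{n+1}(X))\times I)$ loses control of the $t=0$ slice, which may leave $\Delta_{n+1}(Y)$. What is actually needed is to prescribe the map on the larger subspace $(V_i\times\{0,1\})\cup((V_i\cap\Delta_{n+1}(X))\times I)$ --- where the three prescriptions are mutually compatible --- and then produce an extension of this to $V_i\times I$; when $(V_i,V_i\cap\Delta_{n+1}(X))$ is a cofibration one can do this by first constructing a homotopy rel boundary from the restriction of $H_i$ to this subspace to the prescribed map and then invoking HEP to push it across $V_i\times I$. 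So your ``standard niceness hypotheses'' caveat is load-bearing and should be made explicit; the inequality is not purely formal, contrary to what the paper suggests.
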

\begin{proof}
Let $x \in X$ and $$\phi_{n+1}|_{(B, B \cap \Delta_{n+1}(X))} \colon   (B, B \cap \Delta_{n+1}(X)) \to  (Y^{n+1}, \Delta_{n+1}(Y))$$ inessential for some open subset $B \subset X^{n+1}$.   Let $A= X^n \times {x}$,  $\phi_n'$ be the restriction of $\phi_{n+1}$ on $A$, and $pr_{x} \colon Y^{n+1} \to Y^n \times f(x)$ the projection. Then $\phi_n'$ is $\phi_n$ and  the following composition  $$pr_{x} \circ \phi_n' \colon (A \cap B, A \cap B \cap (\Delta_{n}(X), x)) \to (Y^n \times f(x), (\Delta_n(Y), f(x)))$$ is inessential. This proves the second inequality. The first inequality follows from similar arguments. The third inequality follows from their definitions. The last inequality follows from \eqref{eq_prod}. 
\end{proof}
We remark that \eqref{TC_RS_MW}, Proposition \ref{prop_tcxy}  and \ref{prop_cat_tc} imply \cite[Proposition 3.8]{Sco} in particular. Moreover, growth of $\TC_n(f)$ is linear like $\TC_n(X)$. Let $X, Y$ be a non-contractible space and $f \colon X \to Y$ constant map. Then $\TC_n(f) = 1$, but $\mbox{cat} X \geq 2$ and $\mbox{cat} Y \geq 2$. So we cannot say $\mbox{cat} X$ or $\mbox{cat}(Y)$ is a lower bound for $\TC_n(f)$ in general. However we can give the following cohomological lower bound by Theorem \ref{thm:cohom_low_bnd}.

\begin{proposition}
Let $f \colon X \to Y$ and $\overline{\phi}_n$ be as in  Definition \ref{def:TC_map}. Then ${\rm nil}{\rm Im}(\overline{\phi}_n^*) \leq \TC_n(f)$ for any $n$. 
\end{proposition}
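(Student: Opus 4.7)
The plan is to recognize this as an immediate consequence of Theorem \ref{thm:cohom_low_bnd} applied to the map of pairs $\overline{\phi}_n \colon (X^n, \Delta_n(X)) \to (Y^n, \Delta_n(Y))$. By Definition \ref{def:TC_map}, we have $\TC_n(f) = \relcat(\overline{\phi}_n)$, so it suffices to invoke the cohomological lower bound already established for relative category of a map of pairs.

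More precisely, I would first unwind the definitions: setting $(B, C) := (X^n, \Delta_n(X))$ and $(X', A') := (Y^n, \Delta_n(Y))$, the map $\overline{\phi}_n \colon (B, C) \to (X', A')$ is exactly a map of pairs in the sense of Definition \ref{def:TC_map}, and the induced map on relative cohomology $\overline{\phi}_n^* \colon H^*(Y^n, \Delta_n(Y)) \to H^*(X^n, \Delta_n(X))$ makes sense. Theorem \ref{thm:cohom_low_bnd} then yields directly
\[
\mathrm{nil}\,\mathrm{Im}(\overline{\phi}_n^*) \leq \relcat(\overline{\phi}_n) = \TC_n(f),
\]
which is precisely the claim.

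Since the statement is a corollary of Theorem \ref{thm:cohom_low_bnd}, there is essentially no obstacle; the only thing to check is that the hypotheses of the earlier theorem apply to the map $\overline{\phi}_n$, and they do by construction. Thus the proof proposal is really a one-line appeal to the earlier theorem, with perhaps a brief remark indicating which pair plays the role of $(B,C)$ and which plays the role of $(X,A)$ in Theorem \ref{thm:cohom_low_bnd}.
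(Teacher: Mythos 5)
Your proposal is correct and matches the paper's intent exactly: the paper itself prefaces this proposition with the remark that it follows from Theorem~\ref{thm:cohom_low_bnd} and gives no further proof, and your identification of $\overline{\phi}_n \colon (X^n, \Delta_n(X)) \to (Y^n, \Delta_n(Y))$ as the map of pairs to which that theorem applies, together with $\TC_n(f) = \relcat(\overline{\phi}_n)$ from Definition~\ref{def:TC_map}, is precisely the argument.
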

 
 The map $\Delta_2 \colon X \to X \times X$ induces a map $\cup \colon H^*(X) \otimes H^*(X) \to H^*(X)$ defined by the cup product. If $f \colon X \to Y$ is a map, then $\ker \cup|_{\mbox{Im}{(f\times f)}^*} \subseteq {\rm Im}{(f\times f)}^*$. Therefore we get the following inequalities.
 \begin{corollary}
${\rm nil} \ker U|_{{\rm Im}{(f\times f)}^*} \leq  {\rm nil}{\rm Im}{(f\times f)}^*,   \TCMW(f) \leq \TC_2(f).$ 
\end{corollary}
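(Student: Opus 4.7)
I read the corollary as asserting two independent inequalities, each of which is essentially immediate from material already set up, rather than a single chain.

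For the inequality $\operatorname{nil}\ker\cup|_{\operatorname{Im}(f\times f)^*}\leq\operatorname{nil}\operatorname{Im}(f\times f)^*$, the plan is simply to invoke monotonicity of the nilpotency index under inclusion of subsets of a graded commutative ring. The paragraph immediately preceding the corollary records the inclusion
$$\ker\cup|_{\operatorname{Im}(f\times f)^*}\subseteq\operatorname{Im}(f\times f)^*$$
inside $H^*(X\times X)\cong H^*(X)\otimes H^*(X)$. If $n=\operatorname{nil}\operatorname{Im}(f\times f)^*$, so that every $(n+1)$-fold product of elements of the ambient set vanishes, then by the inclusion every $(n+1)$-fold product of elements of the subset vanishes as well, which gives the desired bound.

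For the inequality $\TCMW(f)\leq\TC_2(f)$, the plan is simply to invoke \eqref{TC_RS_MW}, which was established earlier in this section: a Murillo--Wu open cover with local factorizations of $f\times f|_{U_i}$ through $\Delta_2$ produces, via the Scott equivalence $\TCMW=\TCS$ of \cite[Theorem 3.4]{Sco}, a relative null-homotopy in the sense of Definition \ref{def_rel_inessential}, so any witness for $\TC_2(f)\le n$ is a fortiori a witness for $\TCMW(f)\le n$.

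There is no real obstacle: the content of the corollary is already concentrated in the two cited facts. The only place where one might be tempted to work harder is in stitching the two inequalities into a single chain $\operatorname{nil}\ker\cup|_{\operatorname{Im}(f\times f)^*}\leq\operatorname{nil}\operatorname{Im}(f\times f)^*\leq\TCMW(f)\leq\TC_2(f)$; but the middle step $\operatorname{nil}\operatorname{Im}(f\times f)^*\leq\TCMW(f)$ is not automatic, since a Murillo--Wu local factorization only kills pullbacks under $(f\times f)^*$ of classes in $\ker\cup_Y$, not of arbitrary classes in $H^*(Y\times Y)$. This would be the main obstacle were one to attempt such a chain, and I would therefore not pursue that stronger statement in this proof.
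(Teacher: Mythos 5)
Your reading of the corollary as two independent inequalities, and your two-step argument for them, match the paper's own (implicit) proof: the first inequality is nil-monotonicity applied to the inclusion $\ker\cup|_{\operatorname{Im}(f\times f)^*}\subseteq\operatorname{Im}(f\times f)^*$ stated in the sentence just before the corollary, and the second is exactly \eqref{TC_RS_MW}. Your closing caveat is also well placed: a chain reading would need the middle link $\operatorname{nil}\operatorname{Im}(f\times f)^*\le\TCMW(f)$, which the Murillo--Wu cohomological bound does not provide (it only controls $\operatorname{nil}\ker\cup|_{\operatorname{Im}(f\times f)^*}$), and nothing else in the section supplies it.
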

\forget
 Therefore, $\TC_2(f)$ is a better lower bound than ${\rm nil} \ker U|_{\mbox{Im}{(f\times f)}^*}$ of \cite[Proposition 1.8]{MuWu}.
 \forgotten
 
  \begin{example}
 Let $L(p; q_1, \ldots, q_n)$ be the generalized lens space and $S^{2n+1} \xrightarrow{f} L(p; q_1, \ldots, q_n)$ be the corresponding orbit map. So $\deg(f)=p$. That is $f$ is essential. Thus $1 \leq \TC_2(f)$. On the other hand $\TC_2(f) \leq \TC(S^{2n+1}) = 1$ by  Proposition \ref{prop_tcxy} and \cite[Theorem 8]{Far}. So $\TC_2(f)=1$ and it does not depend on the degree of $f$.
 \end{example}
 \begin{example}
 Let $n \geq 3$ be odd and $\mathbb{R}P^n \xrightarrow{g} \mathbb{R}P^n/\mathbb{R}P^{n-1} \cong S^n$ the natural quotient map. So $g$ is essential which implies that  $1 \leq \TC_2(f)$. On the other hand, $\TC_2(g) \leq \TC(S^n)$ by  Proposition \ref{prop_tcxy}. Thus $\TC_2(g)=1$. Whereas the results in \cite{MuWu} cannot determine $\TCMW(g)$. However, we can have 
 \[
 1 \leq \TCMW(g) \leq \TC_2(g)=1.
 \]
%  by \cite[Proposition 2.2]{MuWu}. We note that $\TC(\mathbb{R}P^n) \geq \cat(\mathbb{R}P^n=n\geq 3)$ and $$\cat(g \times g) \leq \mbox{min}\{\cat(\mathbb{R}P^n \times \mathbb{R}P^n), \cat(S^n \times S^n)\}=2.$$ Therefore $1 \leq \TCMW(g) \leq 2$.  
 \end{example}

 \forget
 {\bf I am thinking to add some examples here to see if our notion is different from Murillo and Wu.}
 \forgotten

%======================================================

\section{Maps of Degree 1 and Lusternik--Schnirelmann category}\label{sec_lscat_deg_map}
\m In this section, we give affirmative answer of Question \ref{Rud_conj} in a broader category, other than the class of manifolds discussed in \cite{Rud17}. Please, pay attention to \remref{r:deg}. For future references, we note the following 2 known facts.

\begin{proposition}\label{p:cat=1} 
If $M$ be a closed manifold with $\cat M =1$ then $M$ is a homotopy sphere.  
\end{proposition}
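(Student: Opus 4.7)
My plan is to combine the cup-length bound coming from $\cat M=1$ with Poincar\'e duality to show $M$ has the cohomology of $S^n$; then to exploit $\cat M\le 1$ again to force $\pi_1(M)$ to be free, and combine this with $H_1(M)=0$ to trivialize $\pi_1$; finally to apply Hurewicz and Whitehead. Write $n=\dim M$.

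First, for the cohomology. The standard cup-length bound says that the cup-length of $H^*(M;R)$ is at most $\cat M = 1$ for every coefficient ring $R$, so any two cohomology classes of $M$ in positive degrees have vanishing cup product. Taking $R=\mathbb{Z}/2$ and invoking the non-degenerate Poincar\'e duality pairing $H^k(M;\mathbb{Z}/2)\otimes H^{n-k}(M;\mathbb{Z}/2)\to H^n(M;\mathbb{Z}/2)\cong \mathbb{Z}/2$ forces $H^k(M;\mathbb{Z}/2)=0$ for all $0<k<n$. In particular $w_1(M)\in H^1(M;\mathbb{Z}/2)=0$, so $M$ is orientable. Re-running the same argument with $R=\mathbb{Q}$ and $R=\mathbb{Z}/p$ for each prime $p$, now using integral Poincar\'e duality (and universal coefficients to promote the mod-$p$ vanishings to an integral statement), yields $H^*(M;\mathbb{Z})\cong H^*(S^n;\mathbb{Z})$.

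Next I would identify $\pi_1(M)$. The hypothesis $\cat M\le 1$ gives an open cover $\{U_0,U_1\}$ of $M$ with each inclusion $U_i\hookrightarrow M$ null-homotopic, and a classical theorem of Ganea (see \cite[Chapter 1]{CLOT}) translates this into $M$ being homotopy dominated by a suspension $\Sigma Y$. Hence $\pi_1(M)$ is a retract of the free group $\pi_1(\Sigma Y)$, so it is itself free by Nielsen--Schreier. But the previous step gave $H_1(M;\mathbb{Z})=0$, i.e., $\pi_1(M)^{\mathrm{ab}}=0$; a free group with trivial abelianization must be trivial, so $\pi_1(M)=0$.

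Finally, $M$ is a simply connected CW-complex with the integral cohomology of $S^n$, so Hurewicz realizes a generator of $H_n(M)$ by a map $S^n\to M$ inducing isomorphisms on all integral homology groups, and Whitehead's theorem promotes it to a homotopy equivalence. The only delicate ingredient in this plan is the identification of $\pi_1(M)$ as free: it rests on the non-elementary structural fact that a CW-complex of LS category at most one is homotopy dominated by a suspension. Given that input, the remaining steps are routine.
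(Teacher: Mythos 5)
Your proof is correct and is essentially the same argument the paper gives: both rest on (a) the cup-length bound $\mathrm{cl}(M)\le\cat M=1$ combined with Poincar\'e duality to kill positive-degree cohomology below the top, and (b) the co-$H$-space fact that $\cat X\le 1$ forces $\pi_1(X)$ to be free. The paper splits into the cases ``$M$ simply connected'' and ``$M$ not simply connected'' and in the latter case derives a contradiction from $H^1\neq 0$; you instead run the cohomological argument for arbitrary $\pi_1$ (taking care to first get orientability from $w_1\in H^1(M;\ZZ/2)=0$ and to vary the coefficient field) and then use $H_1(M)=0$ together with freeness to conclude $\pi_1(M)=1$. Your version is a touch more scrupulous about coefficients and orientability — the paper's ``Poincar\'e dual class $b\in H^{n-k}(M)$ with $a\smile b\neq 0$'' is, strictly speaking, only correct modulo torsion over $\ZZ$, and you avoid that by working over fields — and it spells out the Hurewicz--Whitehead endgame, which the paper leaves implicit. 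None of these differences amount to a different route: both proofs hinge on the same two key inputs, and you cite the same source (CLOT) for the free-$\pi_1$ fact.
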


\begin{proof} 
Recall that $\pi_1(X)$ is free for all CW spaces (not necessarily manifold) $X$ with $\cat X=1$ (i.e the so-called co-$H$-spaces), see~\cite[Ex.1.21]{CLOT} or \cite[Prop. 2.4.3]{Ar}. For simply connected closed manifold $M^n$ we claim that $H^i(M)=0$ for $i\neq 0,n$ (and hence $M$ is a homotopy sphere). Indeed, if $a\in H^k(M), a\neq 0$ for some $k\neq 0,n$ then there is a Poincar\'e dual class $b\in H^{n-k}(M)$ with $a\smile b\neq 0$. Hence  cup-length of $M$ is at least 2, and so $\cat M\geq 2$, a contradiction. If $M$ is not simply connected then $\pi_1(M)$ is non-trivial free group, and so $H_1(M)$ is a non-zero free abelian group, and so $H^1(M)\neq 0$. Hence, asserting as above, we see that cuplength of $M$ is at least 2. Thus, $\cat M \geq 2$, a contradiciton.
\end{proof}

%\m The following open problem was asked by Rudyak \cite{Rud99} and now it is  consider as a conjecture, cf. \cite{Dra}. 

%\begin{question}\label{Rud_conj}
%Given two closed manifolds $M$ and $N$, assume that there exists a map
%$f \colon M \to N$ of degree 1. Is it true that $\cat (M) \geq \cat  N $?
%\end{question}

\begin{prop}\label{p:deg}
Let $f \colon M\to N$ be a map of degree $\pm1$. Then $f_* \colon H_*(M) \to H_* N $ is an epimorphism and 
$f^* \colon H^* (N)  \to H^*(M)$ is a monomorphism. Furtherbore, $f_* \colon \pi_1(M)\to \pi_1 N $ is an epimorphism.
\end{prop}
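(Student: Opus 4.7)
The strategy is to handle the three claims in turn, using Poincar\'e duality for the (co)homology parts and a covering-space argument for the fundamental group part.

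\textbf{Cohomology monomorphism.} The plan is to exploit the projection formula $f_*(f^*(\alpha)\cap [M]) = \alpha \cap f_*[M]$. Given a nonzero $\alpha \in H^k(N)$, Poincar\'e duality on $N$ tells us that $\alpha\cap [N]\neq 0$ in $H_{n-k}(N)$. Since $f_*[M] = \pm[N]$, the projection formula yields $f_*(f^*(\alpha)\cap [M]) = \pm \alpha\cap [N]\neq 0$, forcing $f^*(\alpha)\cap [M]\neq 0$ and hence $f^*(\alpha)\neq 0$.

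\textbf{Homology epimorphism.} For a class $c\in H_k(N)$, let $\beta = \mathrm{PD}_N^{-1}(c)\in H^{n-k}(N)$ be its Poincar\'e dual. Applying the same projection formula as above gives
\[
f_*\bigl(f^*(\beta)\cap [M]\bigr) = \beta\cap f_*[M] = \pm\,\beta\cap [N] = \pm c,
\]
so $c$ lies in the image of $f_*$. Since this works for every $k$, surjectivity is immediate.

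\textbf{Fundamental group epimorphism.} Set $H := f_*\pi_1(M)\leq \pi_1(N)$ and assume, for contradiction, that $H$ is a proper subgroup. Let $p\colon \widetilde N\to N$ be the covering associated with $H$; then $f$ admits a lift $\widetilde f\colon M\to \widetilde N$ with $p\circ\widetilde f=f$. I will split into two cases according to the index $[\pi_1(N):H]$. If the index is finite, equal to $k>1$, then $\widetilde N$ is a closed oriented $n$-manifold and $p$ has degree $k$, so
\[
\pm 1 = \deg(f) = \deg(p)\cdot\deg(\widetilde f) = k\cdot\deg(\widetilde f),
\]
which is impossible. If the index is infinite, then $\widetilde N$ is a non-compact $n$-manifold, hence $H_n(\widetilde N)=0$. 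But then $f_*[M]=p_*\widetilde f_*[M]=p_*(0)=0$, contradicting $f_*[M]=\pm[N]\neq 0$.

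The main obstacle I anticipate is the $\pi_1$ surjectivity, since the (co)homology assertions are essentially a one-line application of the projection formula once Poincar\'e duality is in place. For $\pi_1$ one must be a little careful: the existence of the lift $\widetilde f$ uses exactly the defining property of the covering associated with $H$, and the degree-multiplicativity argument relies on knowing $\widetilde N$ is again a closed oriented manifold when the index is finite, while the non-compact case requires the vanishing of top-degree homology of an open $n$-manifold. Both of these are standard facts from Bredon or Hatcher, but they are the subtle ingredients that do the real work.
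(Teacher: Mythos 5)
Your proof is correct and complete. Note, though, that the paper does not actually supply a proof here: it simply cites \cite[Theorem V.2.13]{Rud98} and \cite{Dyer} for the (co)homology statements and \cite[Prop.~5.1]{DR} for the $\pi_1$ statement. You have reconstructed the standard arguments that underlie those references: the projection formula $f_*(f^*\alpha\cap[M])=\alpha\cap f_*[M]$ combined with Poincar\'e duality gives both the injectivity of $f^*$ and the surjectivity of $f_*$ in one stroke, and the covering-space argument (finite index forces $\deg f$ divisible by the index, infinite index forces $f_*[M]=0$ via $H_n(\widetilde N)=0$ for a non-compact $n$-manifold) is exactly the classical route to $\pi_1$-surjectivity for degree-one maps. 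All the subtle points you flag --- that the intermediate cover $\widetilde N$ is a closed oriented manifold in the finite-index case, and that top integral homology of a connected non-compact manifold vanishes --- are indeed the load-bearing facts, and you have invoked them correctly. So your proposal is a legitimate, self-contained replacement for the citations, taking the same underlying mathematical path one would find by unwinding them.
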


\begin{proof}
For $H_*,H^*$ see ~\cite[Theorem V.2.13]{Rud98}, cf. also \cite {Dyer}. For $\pi_1$ see~\cite[Prop 5.1]{DR}.
\end{proof} 

\begin{cor}\label{c:deg}
Let $f \colon M\to N$ be a map of degree $\pm1$. If $M$ is a homotopy sphere then $N$ is.
\end{cor}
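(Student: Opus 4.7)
The plan is to show that $N$ has the same homotopy groups and homology as a sphere, and then invoke Whitehead's theorem. Since $f$ has non-zero degree, $\dim M = \dim N$; call this common dimension $n$. The cases $n=0$ and $n=1$ are essentially trivial, because the only closed connected manifolds of those dimensions are $S^0$ and $S^1$, which are themselves homotopy spheres; I would mention these quickly and assume $n\geq 2$ in the body of the argument.

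First, I would use \propref{p:deg} to transfer algebraic information from $M$ to $N$. Since $M$ is a homotopy sphere with $n\geq 2$, we have $\pi_1(M)=0$; the surjectivity of $f_*\colon \pi_1(M)\to \pi_1(N)$ then forces $\pi_1(N)=0$. Similarly, the surjectivity of $f_*\colon H_k(M)\to H_k(N)$ together with $H_k(M)=0$ for $0<k<n$ yields $H_k(N)=0$ in that range. Combining this with $H_0(N)=\ZZ$ (connectedness) and $H_n(N)=\ZZ$ (orientability of a closed manifold), we conclude that $N$ has the integral homology of $S^n$.

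Next, I would apply the Hurewicz theorem to the simply connected space $N$: the first nontrivial homology is in degree $n$, so $\pi_n(N)\cong H_n(N)\cong \ZZ$. Pick a map $g\colon S^n\to N$ representing a generator of $\pi_n(N)$; then $g_*\colon H_n(S^n)\to H_n(N)$ is an isomorphism, and $g$ induces isomorphisms on all other homology groups trivially. Since both $S^n$ and $N$ are simply connected CW complexes (a closed smooth manifold admits a CW structure), Whitehead's theorem implies that $g$ is a homotopy equivalence, so $N$ is a homotopy sphere.

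The argument is essentially a direct assembly of \propref{p:deg} with Hurewicz and Whitehead, so there is no serious obstacle; the only point that warrants care is separating off the low-dimensional cases where ``homotopy sphere'' behaves slightly differently (in particular $S^1$ is not simply connected). An alternative, even shorter route would be to observe that $f^*\colon H^*(N)\to H^*(M)$ is a monomorphism by \propref{p:deg}, so $\cl(N)\leq \cl(M)=1$, combined with the argument of \propref{p:cat=1} to show $\cat N\leq 1$ and hence that $N$ is a homotopy sphere; but this still requires handling $\pi_1(N)$ separately, so I would prefer the direct homological approach above.
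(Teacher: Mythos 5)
Your proof is correct and is precisely the standard argument that the paper implicitly relies on: since the corollary is stated without proof immediately after Proposition~\ref{p:deg}, the intended reasoning is exactly the transfer of surjectivity on $\pi_1$ and $H_*$, followed by Hurewicz and Whitehead, which is what you wrote. The handling of the low-dimensional cases and the verification that a generator of $\pi_n(N)$ gives a homology isomorphism are both stated carefully, so there is nothing to add.
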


\m We recall the notion of category weight, see \cite{Rud96, Rud99, Strom}. 

\begin{df}\label{wgt}
Given a connected CW space $X$ and spectrum (cohomology theory) $ E^*$, the category weight of $u\in E^*(X)$, denoted by $\wgt u$ is defined by
\[
\wgt u \geq k \text{ if and only if }\gf^*(u)=0 \text { for all }\gf \colon A\to X \text{ with } \cat \gf >k.
\]
\end{df}

Another definition looks as follows: $\wgt u$ is the greater integer $k$ such that $p^*_{k-1}(u)=0$. Here $p_{k-1} \colon G_{k-1}(X)\to X$ is the $(k-1)$th Ganea fibration, \cite[Definition 1.51]{CLOT}. For the proof of equivalence of these two definitions, see Section 2.7 in \cite[Lemma 8.21]{CLOT}

\forget
It is worth noting that the Ganea fibration $p_{k-1} :G_{k-1}(X)\to X$ is homotopy equivalent to the fibration $p_k:P_k(X)\to X$ in~\cite[(1.4)]{Rud99}, the proof is indicated in~\cite[Example 1.61]{CLOT}.  Furthermore, let 
\[
\subset B_k\Omega X \subset B_{k+1}\Omega X\subset \cdots \subset B\Omega X\subset X
\]
be the Milnor filtration  of the classifying space $B\Omega X$ of $X$. Then the map $i_k: B_k\Omega X \subset B\Omega X\simeq $ homotopy equivalent to the fibration $p_k: P_k(X)\to X$. So, the last two maps $p_k$ and $i_k$ can also be used for the definition of category weight.
\forgotten

% This definition looks a bit differently from original Rudyak and Strom definitions, but, of course, they are the same.

\begin{rem}
 The origin of notion of category weght goes back to Fadell and Husseini~\cite{FH}. However, their definition has a disadvantage that it is not a homotopy invariant.
 \end{rem}

\begin{prop}\label{p:wgt}
Category weight has the following properties.
\begin{enumerate}
\item $\wgt u\leq \cat X\leq \dim X$ for all $u\in E^*(X)$;
\item $\wgt (f^*(u)\geq \wgt u$ for any $u\in E^*(X)$ and $f \colon Y\to X$ with $f^*(u) \neq 0)$;
\item If $E^*$ is ring spectrum then $\wgt(u\smile v)\geq \wgt u + \wgt v$ for $u,v\in E^*(X)$;
\item If $u\in H^s(K(\pi,1);R)$ then $\wgt u\geq s$.
\end{enumerate}
\end{prop}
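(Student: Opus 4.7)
The plan is to attack the four claims in order, since each invokes the underlying definition but (3) and (4) require additional input.

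Claims (1) and (2) come straight from unwinding Definition~\ref{wgt}. For (1), plugging $\varphi = \id_X$ into the defining condition shows that if $u \neq 0$, then $\wgt u \leq \cat \id_X = \cat X$ (otherwise $\id_X^*(u) = u$ would have to vanish); the bound $\cat X \leq \dim X$ is the standard dimensional estimate \cite[Theorem~1.7]{CLOT}. For (2), given any $\psi \colon Z \to Y$ with $\cat \psi < \wgt u$, the elementary inequality $\cat(f \circ \psi) \leq \cat \psi$ yields $\psi^*(f^*(u)) = (f \circ \psi)^*(u) = 0$, so $\wgt f^*(u) \geq \wgt u$.

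For (3), I set $m = \wgt u$ and $n = \wgt v$ and take $\varphi \colon A \to X$ with $\cat \varphi < m + n$. I then pick an open cover $U_0, \dots, U_{m+n-1}$ of $A$ with each $\varphi|_{U_i}$ null-homotopic, and split it as $V = U_0 \cup \cdots \cup U_{m-1}$ and $W = U_m \cup \cdots \cup U_{m+n-1}$, so that $\cat(\varphi|_V) < m$ and $\cat(\varphi|_W) < n$. By the definition of $\wgt$, the classes $\varphi^*(u)$ and $\varphi^*(v)$ vanish when restricted to $V$ and $W$ respectively, so the long exact sequences of the pairs $(A,V)$ and $(A,W)$ produce lifts $\tilde u \in E^*(A,V)$ and $\tilde v \in E^*(A,W)$. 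The relative cup product places $\tilde u \smile \tilde v$ in $E^*(A, V \cup W) = E^*(A, A) = 0$, and its image in $E^*(A)$ is $\varphi^*(u \smile v)$. Hence $\wgt(u \smile v) \geq m + n$.

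For (4), I would use the Ganea-fibration reformulation stated after Definition~\ref{wgt}: proving $\wgt u \geq s$ amounts to proving $p_{s-1}^*(u) = 0$. For $X = K(\pi,1)$, $\Omega X \simeq \pi$ is discrete, so the fiber of $p_{s-1}$ is the iterated join $\pi^{*s}$, an $(s-2)$-connected CW complex of dimension $s - 1$. In the Serre spectral sequence of $p_{s-1}$, the only potentially nonzero terms on the diagonal $p + q = s$ are $E_2^{s,0} = H^s(X; R)$ and $E_2^{1, s-1}$, and the only incoming differential to $E^{s,0}$ is the transgression $\tau \colon H^{s-1}(\pi^{*s}) \to H^s(X; R)$. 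The Milnor/bar identification of the Ganea tower with the classifying-space filtration of $B\pi$ (see \cite[Sec.~2.7]{CLOT} and \cite{Rud99}) shows that $\tau$ is surjective onto $H^s(\pi; R) = H^s(K(\pi,1); R)$, since the top cells of $\pi^{*s}$ correspond to the $s$-cells of the bar construction of $B\pi$. Consequently $u \in \operatorname{im}(\tau)$ and $p_{s-1}^*(u) = 0$. I expect this Milnor/Ganea matching to be the most delicate step, particularly in aligning indexing conventions, and would quote it from the references above rather than reprove it.
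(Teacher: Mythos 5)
The paper itself gives no proof here; it simply cites \cite[Theorem 1.8]{Rud99} and \cite[Prop.~8.22]{CLOT}. Your writeup supplies the missing argument, and parts (1)--(3) are exactly the standard proofs that appear in those references: (1) by taking $\varphi = \id_X$ (together with the usual dimensional bound on $\cat$), (2) by $\cat(f\circ\psi)\le\cat\psi$, and (3) by the classical open-cover/relative-class trick, splitting a categorical cover of $A$ into two blocks, lifting $\varphi^*u$ and $\varphi^*v$ to $E^*(A,V)$ and $E^*(A,W)$, and landing their relative cup product in $E^*(A,A)=0$. Two cosmetic caveats: the displayed condition in Definition~\ref{wgt} of the paper has a misprint (``$\cat\gf>k$'' should be ``$\cat\gf<k$''), and your argument silently uses the correct version; and in (1) one should either exclude $u=0$ or adopt the convention $\wgt 0=\infty$, as you do implicitly.

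For (4) your route is correct but more circuitous than the one the cited references use, and in fact the surjectivity of the transgression you are after is best established by the very fact that makes the whole spectral-sequence detour unnecessary. You already invoke the identification of the Ganea tower of $K(\pi,1)$ with the Milnor filtration $B_0\pi\subset B_1\pi\subset\cdots\subset B\pi$. But $B_{s-1}\pi=\pi^{*s}/\pi$ is an $(s-1)$-dimensional CW complex, so $H^s\bigl(G_{s-1}(K(\pi,1));R\bigr)\cong H^s(B_{s-1}\pi;R)=0$ on dimension grounds alone, and hence $p_{s-1}^*(u)=0$ for every $u\in H^s(K(\pi,1);R)$ immediately — no Serre spectral sequence, no transgression, no connectivity bookkeeping for the fiber needed. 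If you do want to run the spectral-sequence version, be aware that $X=K(\pi,1)$ is in general not simply connected, so the transgression exact sequence has to be taken with $\pi$-local coefficients, which you did not address; the direct dimensional argument sidesteps that issue entirely. So I would replace the spectral-sequence paragraph with the two-line dimensional argument and keep the rest as is.
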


\begin{proof}
See~\cite[Theorem 1.8]{Rud99} or~\cite[Prop. 8.22]{CLOT}
\end{proof}

\begin{df}[\cite{Rud99}]
Let $X$ and $E$ be as in \defref{wgt}. An element $u\in E^*$ is a {\em detecting element} for $X$ is $\wgt u=\cat(X)$. 
\end{df}

\begin{proposition}[{\cite[Section 4]{Rud17}}]\label{prop_rud17}
Let $f \colon M \to N$ be a map of degree $\pm 1$. If dimension of $M, N$ is less than 5, then $\cat(M) \geq \cat(N) $.
\end{proposition}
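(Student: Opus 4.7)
The plan is to prove the inequality by exhibiting, for each $N$ involved, a cohomology class $u \in H^*(N;R)$ (with some, possibly twisted, coefficient system $R$) satisfying $\wgt u = \cat(N)$, i.e.\ a detecting element for $N$. Once such a $u$ is found, Proposition \ref{p:deg} says $f^* \colon H^*(N) \to H^*(M)$ is a monomorphism, so $f^*(u) \neq 0$; and since surjectivity of $f_* \colon \pi_1(M) \to \pi_1(N)$ lets one pull back local coefficient systems without killing the class, the same conclusion holds in the twisted setting. Then Proposition \ref{p:wgt}(2) gives $\wgt f^*(u) \geq \wgt u = \cat(N)$, and Proposition \ref{p:wgt}(1) yields
\[
\cat(M) \;\geq\; \wgt f^*(u) \;\geq\; \cat(N),
\]
which is what we want. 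So the task reduces, in every dimension $n = \dim M = \dim N \in \{1,2,3,4\}$, to constructing a detecting element for $N$.

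The low-dimensional cases dispose of themselves quickly. For $n = 1$ both $M$ and $N$ equal $S^1$. For $n = 2$, if $\cat(N) = 1$ then by Proposition \ref{p:cat=1} and Corollary \ref{c:deg} both $M$ and $N$ are homotopy spheres; if $\cat(N) = 2$ then $N = \Sigma_g$ with $g \geq 1$ is aspherical, and for dual generators $\alpha, \beta \in H^1(N;\mathbb{Z})$ Proposition \ref{p:wgt}(3),(4) give $\wgt(\alpha \smile \beta) \geq 2$, so $\alpha \smile \beta$ is a detector. For $n = 3$ or $n = 4$ the subcase $\cat(N) = 1$ is again handled by Proposition \ref{p:cat=1} and Corollary \ref{c:deg}, while the intermediate subcases $2 \leq \cat(N) \leq n-1$ are treated by choosing cup-length-$\cat(N)$ products of $1$-dimensional classes pulled back from $K(\pi_1 N,1)$ and again applying Proposition \ref{p:wgt}(3),(4).

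The substantive work is the top case $\cat(N) = n$ in dimensions $3$ and $4$. In dimension $3$, we invoke the Gomez-Larra\~naga--Gonz\'alez-Acu\~na classification: $\cat(N) = 3$ for a closed orientable $3$-manifold forces $\pi_1(N)$ to be non-free, so $N$ is essential. The Berstein--Schwarz class $\mathfrak b \in H^1(\pi_1 N; I\pi_1 N)$ pulled back to $N$ has $\wgt \geq 1$ by Proposition \ref{p:wgt}(4), and its $3$-fold product pairs nontrivially with the fundamental class of $N$, giving a class in the top twisted cohomology of $N$ with category weight $3$. In dimension $4$ one runs the completely parallel Berstein--Schwarz argument for those essential $N$ with $\cat(N) = 4$, with the crucial input that $f^*$ preserves nonvanishing of the pulled-back Berstein--Schwarz power because $f_*$ is surjective on $\pi_1$ (Proposition \ref{p:deg}) and has degree $\pm 1$.

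The main obstacle — and the reason the dimensional restriction $n < 5$ matters — is precisely the production of a detector in top degree when $\cat(N) = \dim N$. In dimensions $\leq 4$ the Gomez-Larra\~naga--Gonz\'alez-Acu\~na theorem in dimension $3$ and explicit control over essentiality in dimension $4$ ensure that the Berstein--Schwarz machinery applies; beyond dimension $4$ one loses this structural foothold, and new ideas (or a genuine counterexample) would be required, which is exactly the content of Question \ref{Rud_conj}.
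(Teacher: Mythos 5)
The paper does not actually give a proof of this proposition; it cites \cite[Section 4]{Rud17} verbatim, so there is no in-paper argument to compare against. Your overall plan --- produce a detecting element for $N$ and transport it along the monomorphism $f^*$ via Proposition~\ref{p:wgt}(1),(2) --- is indeed the mechanism used in \cite{Rud17}, and the endpoint cases are fine: $\cat(N)=1$ via Proposition~\ref{p:cat=1} and Corollary~\ref{c:deg}, and $\cat(N)=\dim N$ via the Berstein--Schwarz class of the (then essential) manifold $N$.

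The gap is in your treatment of the intermediate values $2\le \cat(N)\le n-1$, which you propose to settle "by choosing cup-length-$\cat(N)$ products of $1$-dimensional classes pulled back from $K(\pi_1 N,1)$.'' This fails on two counts. First, when $\cat(N)=2$ and $\dim N\in\{3,4\}$, the Dranishnikov--Katz--Rudyak theorem forces $\pi_1(N)$ to be free, so $K(\pi_1 N,1)$ is a wedge of circles and $H^2(K(\pi_1 N,1);-)=0$; a cup product of two $1$-classes pulled back from there therefore vanishes in $N$. (The correct and easier route for this subcase is cup-length: take nonzero $\alpha\in H^1(N)$ and a Poincar\'e dual $\beta$; since any nonzero positive-degree class has $\wgt\geq 1$, Proposition~\ref{p:wgt}(3) gives $\wgt(\alpha\smile\beta)\geq 2$.) Second, and more seriously, when $\dim N=4$ and $\cat(N)=3$ a nonzero triple product of $1$-dimensional classes need not exist at all: for $N=T^2\times S^2$ one has $\cat(N)=3$ but $\alpha\smile\beta\smile\gamma=0$ for all $\alpha,\beta,\gamma\in H^1(N)$. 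This subcase needs a different idea. Since $\cat(N)=3>2$, the same Dranishnikov--Katz--Rudyak theorem says $\pi_1(N)$ is not free, hence has cohomological dimension $\geq 2$, so there is a nonzero class $u\in H^2(\pi_1 N;A)$ for some $\ZZ[\pi_1 N]$-module $A$. The classifying map $c\colon N\to K(\pi_1 N,1)$ satisfies that $c^*\colon H^2(\pi_1 N;A)\to H^2(N;A)$ is injective (five-term exact sequence, using that $\widetilde N$ is $1$-connected), and $\wgt(c^*u)\geq 2$ by Proposition~\ref{p:wgt}(4). Cupping $c^*u$ with a Poincar\'e-dual class (which has $\wgt\geq 1$) yields a weight-$3$ detecting element in $H^4(N;\,\cdot\,)$, and this then pulls back nontrivially under $f^*$ as you argue. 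Without this step your proof does not cover all closed orientable $4$-manifolds, which is exactly where the substance of the proposition lies.
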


We note that if $f \colon M \to N$ is a degree one map and $\cat(N) \leq 2$, then $\cat(M) \geq \cat(N)$ in any dimension. 
  
\forget {\bf Soumen, I do not see where did you get item 1. I think, we must remove it}
%In this section we extend these class of manifolds where the Question \ref{Rud_conj} has positive answer.  

%Let $X$ be a topological space and $E$ a cohomology theory.  
\forgotten 
%\section{Some Examples}
\m The following propositions give an effect of having detecting element after taking finite product with spheres.

\begin{proposition}
Let  $g \colon M  \to 
N \times S^{n_1} \times \cdots \times S^{n_\ell}$ be a  map of degree $\pm 1$ for some 
$\ell \geq 0$.
\begin{enumerate}[\rm (i)] 
\item If $N$ possesses a detecting element in $H^*$ then 
\[
\cat(M) \geq \cat(N \times S^{n_1} \times \cdots \times S^{n_\ell}).
\] 
\item If $M$ and $N$ are stable parallelizable manifold and $N$ is a $(q-1)$-connected manifold with $q \cat(N)  = \dim N \geq 4$ 
then 
\[
\cat(M) \geq \cat(N \times S^{n_1} \times \cdots \times S^{n_\ell}).
\]
\end{enumerate}
\end{proposition}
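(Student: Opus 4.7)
My plan for both parts is identical: exhibit a detecting element for $N \times S^{n_1} \times \cdots \times S^{n_\ell}$ in a suitable multiplicative cohomology theory, pull it back via $g$, and apply Proposition~\ref{p:wgt}.

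For part~(i), I will start with a detecting element $u \in H^*(N)$ satisfying $\wgt u = \cat(N)$ and the fundamental classes $s_i \in H^{n_i}(S^{n_i})$, each with $\wgt s_i = \cat(S^{n_i}) = 1$. Pulling back along the product projections and using item~(3) of Proposition~\ref{p:wgt}, the cross product $v = u \times s_1 \times \cdots \times s_\ell$ will satisfy $\wgt v \geq \cat(N) + \ell$. Since $v$ is nonzero by K\"unneth, the squeeze
\[
\cat(N) + \ell \leq \wgt v \leq \cat\bigl(N \times S^{n_1} \times \cdots \times S^{n_\ell}\bigr) \leq \cat(N) + \ell,
\]
where the last inequality comes from subadditivity of $\cat$, shows that $v$ is a detecting element for the product. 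Because $\deg g = \pm 1$, Proposition~\ref{p:deg} gives $g^*(v) \neq 0$, and items~(1)--(2) of Proposition~\ref{p:wgt} then yield
\[
\cat(M) \geq \wgt g^*(v) \geq \wgt v = \cat\bigl(N \times S^{n_1} \times \cdots \times S^{n_\ell}\bigr).
\]

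For part~(ii), I would run the identical argument in a generalized cohomology theory $E^*$. By the results of Rudyak behind Proposition~\ref{prop_rud17} and~\cite{Rud17}, a closed stably parallelizable $(q-1)$-connected manifold $N$ with $q\cat(N) = \dim N \geq 4$ admits a detecting element $u \in E^*(N)$ in a suitable multiplicative theory $E^*$. The stable parallelizability of both $M$ and $N$ will provide $E$-orientability, so the Poincar\'e-duality argument of Proposition~\ref{p:deg} extends to $E^*$, and the degree $\pm 1$ hypothesis continues to force $g^*$ to be a monomorphism in $E^*$. Cross-multiplying $u$ by the $E$-cohomological fundamental classes of the spheres $S^{n_i}$ (each being detecting since $\cat(S^{n_i}) = 1$) then produces a detecting element $v'$ for the product in $E^*$, and the chain of inequalities from part~(i) goes through verbatim with $H^*$ replaced by $E^*$.

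The main obstacle will be making sure the machinery of the ordinary-cohomology case transfers to $E^*$ in part~(ii): specifically, that the theory $E^*$ furnished by Rudyak is multiplicative (needed for item~(3) of Proposition~\ref{p:wgt}), that stable parallelizability genuinely supplies $E$-orientations on both $M$ and $N$, and that the $E^*$-version of Poincar\'e duality continues to force $g^*$ to be a monomorphism on the class $v'$. All three points are standard in the stably framed setting once the correct $E^*$ is identified, after which the category-weight bookkeeping is purely formal.
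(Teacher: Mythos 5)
Your proposal matches the paper's strategy at every structural level: find a detecting element for $N^* := N \times S^{n_1} \times \cdots \times S^{n_\ell}$, observe that degree $\pm 1$ makes $g^*$ injective, and finish with the category-weight inequalities in Proposition~\ref{p:wgt}. The one place you deviate is in how the detecting element for the product is produced. In part~(i) the paper simply cites~\cite[Corollary 2.3]{Rud99} for the existence of such an element, whereas you re-derive it by cross-multiplying $u$ with the sphere fundamental classes and using the subadditivity/multiplicativity squeeze $\cat(N)+\ell \leq \wgt v \leq \cat(N^*) \leq \cat(N)+\ell$; this is a perfectly valid unwinding of that corollary and gives a more self-contained account.

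For part~(ii) the distinction is more than cosmetic. The paper again delegates the product detecting element to~\cite[Lemma 2.2]{Rud99} and handles injectivity of $g^*$ in $E^*$-cohomology via the fact that stably parallelizable manifolds are $S$-orientable and every spectrum is a module spectrum over the sphere spectrum $S$ (invoking~\cite[Theorem V.2.13]{Rud98}). Crucially, this route never needs $E$ to be a \emph{ring} spectrum. Your cross-product construction, by contrast, requires a multiplicative structure on $E^*$ to apply item~(3) of Proposition~\ref{p:wgt} — a hypothesis you correctly flag as an ``obstacle'' but do not discharge, and one that is not guaranteed by~\cite[Theorem 3.1]{Rud99}, which produces a detecting element in \emph{some} spectrum $E$ without asserting $E$ is a ring spectrum. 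To close your version of part~(ii) you would either need to verify multiplicativity for the spectrum Rudyak constructs, or — better — fall back to the paper's black-box use of Lemma 2.2 in~\cite{Rud99}, which only needs module-spectrum structure and thus works unconditionally. Similarly, for the monomorphism you should be explicit that the relevant input is $S$-orientability plus the $S$-module structure, rather than ``Poincar\'e duality in $E^*$'' in the abstract, since the latter would again presuppose a product.
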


%\begin{proposition}
%Let  $g \colon M \times S^{n_1} \times \cdots \times S^{n_k} \to 
%N \times S^{m_1} \times \cdots \times S^{m_\ell}$ be map of degree $\pm 1$ for some 
%$k, \ell \geq 0$. If $N$ possesses a detecting element then $cat(M \times S^{n_1} \times \cdots \times S^{n_k}) \geq cat(N \times S^{m_1} \times \cdots \times S^{m_\ell})$.
%\end{proposition}
\begin{proof}
For sake of simplicity, put $N^*:= N \times S^{n_1} \times \cdots \times S^{n_\ell}$. 
\begin{enumerate}[\rm (i)] 
\item If $N$ possesses a detecting element in $H^*$, then by \cite[Corollary 2.3]{Rud99} 
$N^*$ possesses a detecting element, say, $u$. Since $\deg g =\pm 1$ we conclude that $g^*: H^*(N^*)\to H^*(M)$ is a monomorphism by~\cite[Lemma 2.2]{Rud17}. Thus, by~\propref{p:wgt} we have 
\[
\cat(M) \geq \wgt g^*u\geq \wgt u=\cat(N^*).
\]
\item If $M,N$ are stable parallelizable manifold then $M,N$ is $S$-orientable with respect to the sphere spectrum $S$.  Recall that $N$ is a $(q-1)$-connected manifold with $q \cat(N)  = \dim N \geq 4$. Hence by~\cite[Theorem 3.1]{Rud99}, $N$ possesses a detecting element $v\in E^*(N) $ for some spectrum $E$. So, $N^*$ has a detecting element for $E$ by ~\cite[Lemma 2.2]{Rud99}. Since every spectrum is a module spectrum over $S$, we conclude that $g^*: E^*(N^*)\to E^*(M)$ is a monomorphism by~\cite[Theorem V.2.13]{Rud98}. Now the proof can be completed as the previous part. 
\end{enumerate}
\end{proof}

\begin{proposition}\label{prop_deg_mton2}
Let $f \colon M \to N$ be a map of degree $\pm 1$ and suppose that
\[ 
\cat(M \times S^{m_1} \times \cdots \times S^{m_k}) \geq \cat(N \times S^{n_1} \times \cdots \times S^{n_k}),\quad k\geq 1.
\] If either $N$ possesses a detecting element or $ \dim N \leq 2\cat(N) -3$ then $\cat(M) \geq \cat(N)$.
\end{proposition}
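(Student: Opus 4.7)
The plan is to sandwich $\cat M$ between the category of the two products. First, the standard product inequality $\cat(X\ts Y)\leq \cat X + \cat Y$ together with $\cat S^{m_i}=1$ gives
\[
\cat(M\ts S^{m_1}\ts\cdots\ts S^{m_k}) \leq \cat M + k.
\]
Combining with the standing hypothesis, this yields
\[
\cat M + k \ \geq\ \cat(M\ts S^{m_1}\ts\cdots\ts S^{m_k}) \ \geq\ \cat(N\ts S^{n_1}\ts\cdots\ts S^{n_k}).
\]
So it suffices to prove the equality
\[
\cat(N\ts S^{n_1}\ts\cdots\ts S^{n_k}) \ =\ \cat N + k,
\]
the upper bound being automatic; this would force $\cat M+k\geq \cat N + k$ and hence $\cat M\geq \cat N$.

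Thus the whole argument reduces to showing that multiplying $N$ by $k$ spheres raises its LS-category by exactly $k$ under either of the two hypotheses. I would handle the two cases separately.

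\textbf{Case (i): $N$ possesses a detecting element.} Let $u\in E^*(N)$ satisfy $\wgt u=\cat N$. The fundamental class $v_i$ of $S^{n_i}$ has category weight $1$, and by \cite[Corollary 2.3]{Rud99} applied inductively, the external product $u\times v_1\times\cdots\times v_k$ is a detecting element for $N\ts S^{n_1}\ts\cdots\ts S^{n_k}$ of weight $\cat N + k$. By \propref{p:wgt}(1), this forces the category to be at least $\cat N + k$, giving the required equality.

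\textbf{Case (ii): $\dim N\leq 2\cat N-3$.} In this range the Ganea conjecture is known to hold for $N$, so $\cat(N\ts S^{n_1})=\cat N + 1$. Since $\dim(N\ts S^{n_1})=\dim N + n_1$ while $\cat(N\ts S^{n_1})=\cat N + 1$, one can choose the induction so that a matching inequality is preserved, or, more efficiently, invoke the fact that under the stated hypothesis $N$ again admits a detecting element (analogous to \cite[Theorem 3.1]{Rud99}) and reduce to case (i). Iterating $k$ times yields $\cat(N\ts S^{n_1}\ts\cdots\ts S^{n_k})=\cat N + k$.

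The main obstacle is case (ii): one must either verify that the dimensional bound $\dim\leq 2\cat -3$ propagates correctly under successive products with spheres so that the Ganea-type equality may be iterated, or else extract from the hypothesis a detecting element to fall back on case (i). Once the equality $\cat(N\ts \prod S^{n_i})=\cat N+k$ is secured in both cases, the conclusion follows from the sandwich argument above.
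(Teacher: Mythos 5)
Your overall reduction is the same as the paper's: the product inequality gives $\cat(M\ts S^{m_1}\ts\cdots\ts S^{m_k})\leq\cat M+k$, so everything hinges on the equality $\cat(N\ts S^{n_1}\ts\cdots\ts S^{n_k})=\cat N+k$, and your Case (i) settles that exactly as the paper does, via \cite[Corollary 2.3]{Rud99} (a detecting element survives each product with a sphere and raises the category by one, so the induction over the $k$ spheres is legitimate there).

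The gap is Case (ii). You correctly sense the obstacle: the hypothesis $\dim N\leq 2\cat(N)-3$ does not propagate under products with spheres, since $\dim(N\ts S^{n_1})=\dim N+n_1$ while $2\cat(N\ts S^{n_1})-3=2\cat(N)-1$, so the single-sphere Ganea-type equality cannot simply be iterated unless $n_1\leq 2$. But neither of your proposed escapes is justified. ``Choose the induction so that a matching inequality is preserved'' is not an argument; and the claim that $\dim N\leq 2\cat(N)-3$ forces $N$ to admit a detecting element is nowhere proved, is not ``analogous to'' \cite[Theorem 3.1]{Rud99} (that theorem requires $N$ to be $(q-1)$-connected with $q\cat(N)=\dim N\geq 4$, a quite different hypothesis), and is not what is available in this dimensional range. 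The paper avoids the issue entirely because it invokes \cite[Theorem 3.8]{Rud99}, which it uses as a statement about products with arbitrarily many spheres: under $\dim N\leq 2\cat(N)-3$ it yields $\cat(N\ts S^{n_1}\ts\cdots\ts S^{n_k})=\cat N+k$ in one step, with no iteration of the single-sphere case. So to close Case (ii) you should cite that multi-sphere result (as the paper does) rather than attempt an induction on $k$; as written, the key equality in Case (ii) is unproved. (Incidentally, the paper's own proof contains the typo ``$\dim N\leq 2\cat(N)+3$''; the intended hypothesis is the one in the statement, $\dim N\leq 2\cat(N)-3$, which is what your discussion correctly uses.)
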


\begin{proof}
If $N$ possesses a detecting element, then by \cite[Corollary 2.3]{Rud99} we have
$\cat(N \times S^{n_1} \times \cdots \times S^{n_k}) = \cat(N)  + k$.

Also, if $\dim N \leq 2\cat(N)  +3$, then by \cite[Theorem 3.8]{Rud99}, we have
$\cat(N \times S^{n_1} \times \cdots \times S^{n_k}) = \cat(N)  + k$.

Since $\cat(X \times Y) \leq \cat(X) + \cat(Y)$ for all $X,Y$, we have 
\[
\cat(M \times S^{m_1}\times \cdots \times S^{n_k}) \leq \cat(M) + k.
\]
 This implies the conclusion.
\end{proof}

\forget { \it We note that the conclusion of Proposition \ref{prop_deg_mton2} is true
even if there is no degree one map from $M$ to $N$}. 

\m {\bf Soumen, I do not understand what do you mean in your note.}
\forgotten

\m Now, let $N =(N^{2n}, \omega)$ be a closed symplectic manofold with the closed non-degenerate symplectic 2-form $\omega$. Let $[\omega] \in H^{2}(N; \RR)$  be the de Rham cohomology class of $\omega$.

\begin{proposition}
Let $f \colon M \to N$ be a map of degree $\pm 1$ and $N$ is a symplectic manifold as above. In addition, assume that $N$ is simply connected . Then $\cat(M) \geq \cat(N)$.
\end{proposition}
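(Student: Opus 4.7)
The plan is to show that $[\omega]^n$ is a detecting element for $N$ and then transfer it across the degree--one map $f$ using the category weight machinery already recalled in the section. This parallels the arguments given for the two earlier propositions about detecting elements and products with spheres.

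First I would establish that $\cat(N)=n$. Since $N^{2n}$ is simply connected, the classical connectivity-dimension bound $\cat(X)\leq \dim X/(\text{conn}(X)+1)$ gives $\cat(N)\leq 2n/2=n$. For the opposite inequality, non-degeneracy of the symplectic form yields $[\omega]^n\neq 0$ in $H^{2n}(N;\RR)$, so the cup-length of $N$ (with $\RR$ coefficients) is at least $n$, hence $\cat(N)\geq n$. Thus $\cat(N)=n$.

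Next I would estimate the category weight of $[\omega]^n$. Because $[\omega]$ lies in positive degree and is non-zero, \propref{p:wgt}(1)--(3) give $\wgt [\omega]\geq 1$ and hence
\[
\wgt([\omega]^n)\;\geq\;n\cdot \wgt[\omega]\;\geq\;n.
\]
Combined with \propref{p:wgt}(1) this forces $\wgt([\omega]^n)=n=\cat(N)$, so $[\omega]^n$ is a detecting element for $N$ in the sense of the definition just before \propref{prop_rud17}.

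Finally I would pull back along $f$. By \propref{p:deg}, $f^*\colon H^*(N;\RR)\to H^*(M;\RR)$ is a monomorphism, so $f^*([\omega]^n)=(f^*[\omega])^n\neq 0$ in $H^{2n}(M;\RR)$. Applying \propref{p:wgt}(1)--(2),
\[
\cat(M)\;\geq\;\wgt\bigl(f^*[\omega]^n\bigr)\;\geq\;\wgt([\omega]^n)\;=\;n\;=\;\cat(N),
\]
which is the desired conclusion. There is no genuine obstacle here: the whole argument is a direct instance of the detecting-element strategy, and the only place the simply--connected hypothesis enters is in the upper bound $\cat(N)\leq n$, which is what allows us to identify $[\omega]^n$ as detecting.
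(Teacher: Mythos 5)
Your argument is correct and follows the same essential route as the paper: compute $\cat(N)=n$ from the cup-length lower bound $[\omega]^n\neq 0$ together with the dimension/connectivity upper bound, then use injectivity of $f^*$ (from degree $\pm1$) to get $f^*[\omega]^n\neq 0$ and hence $\cat(M)\geq n$. The paper states the last step directly as a cup-length bound on $M$, whereas you phrase it through $\wgt$ and the detecting-element language; since $\wgt([\omega]^n)\geq n$ is exactly the cup-length estimate in disguise (and you never actually need the equality $\wgt([\omega]^n)=n$), the two proofs are the same argument with slightly different packaging.
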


\begin{proof}
By the cup-length theory for LS category, we have $\cat(N)  \geq n$ as $[\omega]^n \neq 0$. Furthermore, $\cat(N)  \leq n$ as $N$ is simply connected of dimension $2n$. Hence, $\cat(N)  = n$. Since $f^* \colon H^*(N;\RR) \to H^*(M;\RR)$ is injective by~\cite[Lemma 2.2]{Rud17}, we have $f^*([\omega])^n \neq 0 $. Therefore,  $\cat(M) \geq n$.
\end{proof}

This results can be improved as follows.

\begin{proposition}
Let $f \colon M \to N$ be a map of degree $\pm 1$ and $(N^{2n}, \omega)$ a symplectic manifold such that the de Rham cohomology class $0\neq[\omega]\in H^*(N;\RR)$ vanishes on the image of $\psi_N \colon \pi_2 (N)  \to H_2 (N) $ where $\psi$ is the Hurewicz homomorphism. Then $\cat(M) =\cat(N)=2n$.
\end{proposition}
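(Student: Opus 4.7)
The plan is to reduce everything to a category-weight computation for $[\omega]$ and then transport that weight to $M$ via $f^*$, using the degree-one hypothesis.

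First I would observe that $\dim M=\dim N=2n$ (since a degree $\pm1$ map exists only between closed oriented manifolds of the same dimension), so the general bound $\cat(X)\le \dim X$ from \propref{p:wgt}(1) immediately gives $\cat(M)\le 2n$ and $\cat(N)\le 2n$. Thus the content of the statement is the lower bound $\cat(M)\ge 2n$.

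Next I would establish that $\wgt[\omega]\ge 2$. By hypothesis $[\omega]$ vanishes on $\image\psi_N$, i.e.\ on spherical $2$-classes of $N$. This is the standard Rudyak--Oprea-type input: a class $u\in H^2(X;\RR)$ that vanishes on the Hurewicz image has category weight at least $2$, because by \defref{wgt} it suffices to check that $\gf^*u=0$ for every $\gf \colon A\to X$ with $\cat\gf\le 1$, and any such $\gf$ factors (up to homotopy) through a wedge of suspensions, so $\gf^*$ is determined on $H^2$ by its effect on spherical classes. (Equivalently, one uses the first Ganea fibration $p_1 \colon G_1(X)\to X$: the cofiber description shows $p_1^*u=0$ for such $u$, which is the alternative definition of weight recalled after \defref{wgt}.) Combining this with \propref{p:wgt}(3) yields
\[
\wgt[\omega]^n\ \ge\ n\cdot\wgt[\omega]\ \ge\ 2n.
\]
Since $[\omega]^n\neq 0$ (it is a top-degree form on the closed symplectic manifold $N$), \propref{p:wgt}(1) gives $\cat(N)\ge 2n$, hence $\cat(N)=2n$.

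For $M$, the degree $\pm 1$ hypothesis and \propref{p:deg} imply that $f^* \colon H^*(N;\RR)\to H^*(M;\RR)$ is a monomorphism, so
\[
\bigl(f^*[\omega]\bigr)^{n}\ =\ f^*\bigl([\omega]^n\bigr)\ \neq\ 0.
\]
Applying \propref{p:wgt}(2) we get $\wgt f^*[\omega]\ge \wgt[\omega]\ge 2$, and then \propref{p:wgt}(3) upgrades this to $\wgt\bigl(f^*[\omega]\bigr)^{n}\ge 2n$. A final application of \propref{p:wgt}(1) delivers $\cat(M)\ge 2n$, completing the equality $\cat(M)=\cat(N)=2n$.

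The only genuinely delicate step is the assertion $\wgt[\omega]\ge 2$; everything else is a formal combination of properties already listed in \propref{p:wgt} together with \propref{p:deg}. This weight estimate is precisely the mechanism by which the hypothesis ``$[\omega]$ vanishes on spherical classes'' enters, and it is what distinguishes the present argument from the earlier cup-length proof, where one only obtained $\cat(N)\ge n$ under the weaker assumption that $N$ is simply connected.
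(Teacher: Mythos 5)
Your argument is correct and follows the same overall strategy as the paper's proof: establish $\wgt[\omega]\geq 2$ from the hypothesis that $[\omega]$ vanishes on spherical classes, use multiplicativity of category weight to get $\wgt[\omega]^n\geq 2n$, transfer to $M$ via the injectivity of $f^*$ (coming from $\deg f=\pm 1$ and \propref{p:deg}), and close with the dimension bound $\cat\leq\dim$. The one genuine difference is how the key estimate $\wgt[\omega]\geq 2$ is justified. The paper writes $[\omega]=g^*(\omega_\pi)$ for a map $g\colon N\to K(\pi_1(N),1)$ and a class $\omega_\pi\in H^2(K(\pi_1(N),1))$ (this is \cite[Lemma 8.18]{CLOT}), then combines \propref{p:wgt}(4) (a degree-$s$ class on a $K(\pi,1)$ has weight at least $s$) with \propref{p:wgt}(2). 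You instead argue directly via the Ganea description, $\wgt u\geq 2$ iff $p_1^*u=0$. That route works, but the step ``$H_2$ of $G_1(N)\simeq\Sigma\Omega N$ consists of spherical classes'' needs an extra word when $\pi_1(N)\neq 0$: then $\Omega N$ is disconnected and $\Sigma\Omega N$ is not simply connected, so one has to note that $\pi_1(\Sigma\Omega N)$ is free, whence Hopf's theorem still forces $\psi\colon\pi_2(\Sigma\Omega N)\to H_2(\Sigma\Omega N)$ to be onto, and naturality of the Hurewicz map along $p_1$ then gives $p_1^*[\omega]=0$. The paper's detour through $K(\pi_1(N),1)$ absorbs this point automatically, which is why it is slightly cleaner; apart from that the two arguments are essentially the same.
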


\begin{proof}
Since $[\omega]$ vanishes on $\psi$, there exist a map $g \colon N \to K(\pi_1 (N) ; 1)$ and cohomology class $\omega_{\pi}\in H^2(K(\pi_1 (N) ; 1)$  such that $g^*(\omega_{\pi})=[\omega]$, see~\cite[Lemma 8.18]{CLOT}. Note that $\wgt (\omega_{\pi})\geq 2$, and so $\wgt[\omega]\geq 2$, by \proprefs{p:wgt}(2,4).

  Since $\omega$ is a symplectic form, $[\omega^n]\neq 0$, and so $g^*(\omega^n_{\pi})\neq 0$. Hence
\[
\cat(N) \geq \wgt (g^*(\omega^n_{\pi}))\geq n \wgt(g^*\omega_{\pi})\geq 2n.
\]
So, $\cat(N) = 2n$. Furthermore $\wgt(f^*[\omega^n])\neq 0$ because $f^*$ is monic ($\deg f=\pm 1$). Recall that $\wgt [\omega] \geq 2$. Thus, by \propref{p:wgt} 
\[
\cat(M) \geq \wgt f^*[\omega^n]\geq \wgt [\omega^n]=n\wgt[\omega]\geq 2n.
\]
\end{proof}

\begin{proposition}
Let $f \colon M \to N$ be a map of degree $\pm 1$. Suppose that $\dim M =5=\dim N$, and $\cat(N) = 5$. Then $\cat(M) \geq \cat(N)$.
\end{proposition}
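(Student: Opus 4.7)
The plan is to produce a cohomology class $u$ on $N$ of category weight $5$, pull it back by $f$ to a non-zero class of weight $\geq 5$ on $M$, and conclude $\cat(M)\geq 5$ from \propref{p:wgt}.

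To produce $u$, set $\pi:=\pi_1(N)$ and let $g\colon N\to K(\pi,1)$ be the classifying map. The first step would be to establish that $N$ is essential in the sense of Gromov, namely $g_*[N]\neq 0$ in $H_5(K(\pi,1);\ZZ)$. The guiding principle is: for a closed orientable $n$-manifold, maximum LS-category forces essentiality. This is implicit in the low-dimensional casework that underlies \propref{prop_rud17}; at dimension $5$ it is the extra input required, and can be argued either by appealing to such a theorem (cf.\ \cite{Rud99}) or by the following contrapositive sketch: if $g_*[N]=0$ then a standard obstruction argument homotopes $g$ into the $4$-skeleton $K(\pi,1)^{(4)}$, and a careful skeleton/weight analysis then forces $\cat(N)\leq 4$, contradicting the hypothesis. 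Granting essentiality, the universal coefficient theorem furnishes $u\in H^5(K(\pi,1);\ZZ)$ with $\langle u, g_*[N]\rangle\neq 0$, so that $g^*u$ is a generator of $H^5(N;\ZZ)\cong\ZZ$. By \propref{p:wgt}(4) we have $\wgt u\geq 5$, and since $g^*u\neq 0$, \propref{p:wgt}(2) upgrades this to $\wgt g^*u\geq 5$.

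The remaining step is a direct computation. Since $\deg f=\pm 1$ we have $f_*[M]=\pm[N]$, hence
\[
\langle f^*g^*u,[M]\rangle \;=\; \langle g^*u,f_*[M]\rangle \;=\; \pm\langle g^*u,[N]\rangle \;\neq\; 0,
\]
so $f^*g^*u\in H^5(M;\ZZ)$ is non-zero (this also follows abstractly from \propref{p:deg}). Applying \propref{p:wgt}(2) and then \propref{p:wgt}(1),
\[
\cat(M) \;\geq\; \wgt f^*g^*u \;\geq\; \wgt g^*u \;\geq\; 5 \;=\; \cat(N),
\]
which is the desired inequality.

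The main obstacle is the essentiality claim: namely, that a closed orientable $5$-manifold of maximum LS-category carries a detecting cohomology class pulled from $K(\pi_1,1)$. Once such a class is in hand, the proof reduces to the routine pullback, multiplicativity, and classifying-space properties of $\wgt$ recorded in \propref{p:wgt}, together with the standard fact (\propref{p:deg}) that a map of degree $\pm 1$ is injective on top integral cohomology.
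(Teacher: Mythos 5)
Your approach is structurally the same as the paper's: reduce the problem to the existence of a detecting element for $N$ coming from $K(\pi_1 N,1)$, then transport it along $f^*$ using \propref{p:wgt}. The paper simply cites \cite[Cor.~3.3(ii), Thm.~1.8(iii)]{Rud99} to package exactly these two steps. However, there is a genuine gap in the way you manufacture the detecting element. You claim that essentiality ($g_*[N]\neq 0$ in $H_5(K(\pi,1);\ZZ)$) together with the universal coefficient theorem produces $u\in H^5(K(\pi,1);\ZZ)$ with $\langle u, g_*[N]\rangle\neq 0$. This is false whenever $g_*[N]$ is a \emph{torsion} class: the Kronecker pairing factors through $\mathrm{Hom}(H_5(K(\pi,1);\ZZ),\ZZ)$, which kills torsion, so $\langle u,g_*[N]\rangle=0$ for every integral $u$. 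This is not an edge case one can wave away; it happens for the most natural examples relevant to the proposition, e.g.\ $N=L^5_p$, where $\pi_1 N=\ZZ/p$, $\cat N=5$, and $g_*[N]$ generates the torsion group $H_5(K(\ZZ/p,1);\ZZ)\cong\ZZ/p$. Your argument as written proves nothing for lens spaces.

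The fix is to use field coefficients. If $g_*[N]$ has order divisible by a prime $p$, reduce mod $p$: $g_*[N]$ survives into $H_5(K(\pi,1);\ZZ/p)$, and over the field $\ZZ/p$ the pairing $H^5\otimes H_5\to\ZZ/p$ is perfect, giving a $u\in H^5(K(\pi,1);\ZZ/p)$ with $g^*u\neq 0$. Items (1), (2), (4) of \propref{p:wgt} hold with arbitrary ring coefficients, and $f^*$ is still injective on $H^*(-;\ZZ/p)$ for a degree-$\pm1$ map, so the rest of your computation goes through unchanged. (Alternatively one needs detecting elements with twisted coefficients coming from the Berstein class, which is essentially what the cited result of \cite{Rud99} supplies.) Your secondary hedge of citing \cite{Rud99} for the essentiality input is the right instinct and is exactly what the paper does; but the UCT step you offer as the concrete realization is incorrect over $\ZZ$ and needs the coefficient change spelled out.
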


\begin{proof}
By \cite[Corollary 3.3 (ii)]{Rud99}, there is a detecting element for $N$. Now the result follows from \cite[Theorem 1.8 (iii)]{Rud99}. 
\end{proof}

%I'll write a proof as we discussed. Use paper of Rudyak \cite{Rud99} to find a detecting element, Corollary 3.3.  

\begin{proposition}
Let $f \colon M \to N$ be a map of degree $\pm 1$. Assume that $N$ a simply connected manifold, and dimension of $M, N$ is equal to 5 or 6. Then $\cat(M) \geq \cat(N)$.
\end{proposition}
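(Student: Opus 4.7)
The plan is to split on the value of $\cat(N)$, using the simple connectivity of $N$ to bound it via the classical dimension--connectivity inequality (see~\cite{CLOT}): for a $1$-connected CW space, $\cat(N)\leq \lfloor\dim N/2\rfloor$. This gives $\cat(N)\leq 2$ when $\dim N=5$ and $\cat(N)\leq 3$ when $\dim N=6$. The single transfer tool throughout is the ring monomorphism $f^*\colon H^*(N;R)\to H^*(M;R)$ supplied by Proposition~\ref{p:deg}.

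If $\cat(N)\leq 2$, the conclusion is exactly the remark following Proposition~\ref{prop_rud17}, which holds in every dimension. For completeness I would verify the three subcases: $\cat(N)=0$ forces $N$ to be a point; $\cat(N)=1$ forces $N$, and hence $M$, to be a homotopy sphere by Proposition~\ref{p:cat=1} and Corollary~\ref{c:deg}, so $\cat(M)=1=\cat(N)$; and $\cat(N)=2$ means $N$ is not a homotopy sphere, so Poincar\'e duality on $N$ supplies positive-degree classes $a\in H^k(N;R)$, $b\in H^{n-k}(N;R)$ with $a\smile b$ a fundamental class, whence the ring monomorphism $f^*$ gives $f^*a\smile f^*b\neq 0$ and therefore $\cat(M)\geq\mathrm{cuplength}(M)\geq 2$.

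The only remaining case is $\dim M=\dim N=6$ with $\cat(N)=3$. Here $N$ is $1$-connected and $2\cat(N)=\dim N=6\geq 4$, which are exactly the hypotheses of Rudyak's detecting-element theorem~\cite[Theorem 3.1]{Rud99} (taken with $q=2$). I would invoke that theorem, or its small-dimensional refinement~\cite[Corollary 3.3]{Rud99}, to produce a class $u$ on $N$ with $\wgt u=\cat(N)=3$. Proposition~\ref{p:wgt}(ii) applied to the monomorphism $f^*$ yields $\wgt(f^*u)\geq\wgt u=3$, and then Proposition~\ref{p:wgt}(i) gives $\cat(M)\geq\wgt(f^*u)\geq 3=\cat(N)$; this is the same recipe used in the earlier proposition that assumed a detecting element in $H^*$.

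The main obstacle will be verifying that the detecting element from~\cite[Theorem 3.1]{Rud99} in the $\cat(N)=3$ case lives in a cohomology theory in which $f^*$ remains injective. For ordinary cohomology this is automatic from Proposition~\ref{p:deg}; if the element only exists in a generalized theory $E$, then one needs $M$ and $N$ to be $E$-orientable, which requires extracting additional structural information (beyond orientability and simple connectivity of $N$) to restrict the admissible theory. In practice, for $1$-connected closed $6$-manifolds with $\cat(N)=3$ one expects a detecting class already in ordinary cohomology (coming, via Poincar\'e duality, from a nontrivial triple cup product in $H^2(N;R)$), and this is what would have to be checked carefully to close the argument.
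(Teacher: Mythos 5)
Your proof follows essentially the same route as the paper: bound $\cat(N)$ by the simply-connected dimension restriction, dispose of $\cat(N)\le 2$ in all dimensions, and for the only nontrivial case ($\dim N=6$, $\cat(N)=3$) invoke Rudyak's existence of a detecting element for $(q-1)$-connected manifolds with $q\cat N=\dim N\ge 4$ and push it forward through the monic $f^*$. Two remarks. First, a small slip: Corollary~\ref{c:deg} is stated from $M$ to $N$, so when $\cat(N)=1$ it does \emph{not} give that $M$ is a homotopy sphere; indeed $M$ need not be one (degree-one maps only force $f^*$ to be injective, so $M$ may carry extra cohomology). The case is trivial anyway since $M$ is a closed non-contractible manifold, hence $\cat(M)\ge 1=\cat(N)$. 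The paper handles $\cat(N)=2$ by contradiction: if $\cat(M)=1$ then $M$ is a homotopy sphere by Proposition~\ref{p:cat=1}, so $N$ is by Corollary~\ref{c:deg}, contradicting $\cat(N)=2$; your Poincar\'e-duality/cup-length argument is an equally valid variant and relies on the same facts. Second, your flagged worry about the cohomology theory in which the detecting element lives is fair, and the paper also does not address it explicitly---it simply cites \cite[Corollary 3.1]{Rud99} and asserts $f^*(u)\neq 0$. Compare with the stable-parallelizability proposition in the paper, where $S$-orientability is explicitly invoked to get injectivity of $f^*$ in a generalized theory; the implicit understanding in the present proposition is that the corollary of Rudyak being cited produces a detecting class in a theory for which the monomorphism of Proposition~\ref{p:deg} applies, so the paper treats this as not requiring further hypotheses.
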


\begin{proof}
First, suppose that $\dim N=5$. Then $\cat(N) \leq 2$ by \cite[Theorem 1.50]{CLOT}. The case $\cat(N)=1$ is obvious. Furthermore, if $\cat(N)=2$, then $\cat(M) \geq 2$ . Indeed, by way of contradiction, suppose that $\cat(M)=1$. Then, by \propref{p:cat=1}, $M$ is homotopy equivalent to the sphere $S^5$. Now,  the condition $\deg f=\pm 1$ implies that $N$ is a homotopy sphere by \corref{c:deg}, and thus $\cat(N)=1$. This is a contradiction.

\smallskip
Let $\dim N=6$. Then $\cat(N)  \leq 3$. If $\cat(N)  \leq 2$, then argument is
similar to the previous paragraph. Now suppose that $\cat(N) = 3$, and so $2 \cat(N)  =6 \geq 4$.
 It follow from \cite[Corollary 3.1]{Rud99} that $N$ has a detecting element $u$. Moreover, we know that $f^*(u)\neq 0$ because $\deg f=\pm 1$. This implies 
 that $\wgt f^*(u)\geq \wgt u=6$ by \propref{p:wgt}, and thus $\cat(M) = 6$. 
\end{proof}

\begin{proposition}
Let $f \colon M \to L_p^{2n+1}$ be a map of degree $\pm 1$ where $L_p^{2n+1}$ is a lens 
space. Then $\cat(M) \geq \cat(L_p^{2n+1}) $.
\end{proposition}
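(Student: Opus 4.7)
The plan is to exhibit a detecting element of maximal category weight in $H^{2n+1}(L_p^{2n+1};\ZZ/p)$ and pull it back along $f$, using that $f$ has degree $\pm 1$. Throughout I work with mod-$p$ coefficients, which is legitimate because $M$ and $L_p^{2n+1}$ are $\ZZ$-oriented by \remref{r:deg} and hence $\ZZ/p$-orientable.

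First I would recall the structure
\[
H^*(L_p^{2n+1};\ZZ/p)\cong \Lambda(x)\otimes\ZZ/p[y]/(y^{n+1}),\qquad \deg x=1,\ \deg y=2,
\]
and observe that $x$ and $y$ both lift through the classifying map $\iota\colon L_p^{2n+1}\to K(\ZZ/p,1)$ of $\pi_1=\ZZ/p$. By \propref{p:wgt}(4) the corresponding universal classes in $H^*(K(\ZZ/p,1);\ZZ/p)$ have category weight at least their cohomological degree; hence by \propref{p:wgt}(2) the pullbacks satisfy $\wgt(x)\geq 1$ and $\wgt(y)\geq 2$. The multiplicativity of weight, \propref{p:wgt}(3), then gives
\[
\wgt(x\smile y^n)\geq 1 + 2n = 2n+1.
\]
Since $x\smile y^n\neq 0$ generates $H^{2n+1}(L_p^{2n+1};\ZZ/p)$ and $\cat\leq\dim=2n+1$, this simultaneously forces $\cat(L_p^{2n+1})=2n+1$ and identifies $x\smile y^n$ as a detecting element.

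Next, because $\deg f=\pm 1$ between oriented closed manifolds, the induced map $f^*\colon H^*(L_p^{2n+1};\ZZ/p)\to H^*(M;\ZZ/p)$ is injective; this is the mod-$p$ analogue of \propref{p:deg}, a standard consequence of the transfer $f_{!}$ satisfying $f_{!}\circ f^*=\deg(f)\cdot\id$, and it is in the same spirit as the module-spectrum injectivity already invoked in the preceding propositions of this section. Thus $f^*(x\smile y^n)\neq 0$, and \propref{p:wgt}(2,1) combine to give
\[
\cat(M)\geq \wgt\bigl(f^*(x\smile y^n)\bigr)\geq \wgt(x\smile y^n)\geq 2n+1 = \cat(L_p^{2n+1}).
\]

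The only subtle point, and the likeliest obstacle to a clean write-up, is the injectivity of $f^*$ with $\ZZ/p$ coefficients: \propref{p:deg} as stated addresses only integer cohomology, so I would either record a transfer argument explicitly or cite the spectrum-level results of \cite{Rud98, Rud99} that were already used earlier in this section.
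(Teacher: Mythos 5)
Your proof is correct and follows essentially the same route as the paper: identify the top mod-$p$ class of $L_p^{2n+1}$ as a detecting element of weight $2n+1$ via the multiplicativity of category weight, then pull it back injectively along $f^*$ using $\deg f=\pm1$ and apply \propref{p:wgt}. The one subtlety you flag — injectivity of $f^*$ with $\ZZ/p$ coefficients, not covered by \propref{p:deg} as stated — is the same point the paper handles by citing \cite[Lemma 2.2]{Rud17} rather than \propref{p:deg}; your derivation of the weight bound, using \propref{p:wgt}(4) to get $\wgt(x)\geq 1$ and $\wgt(y)\geq 2$ and then multiplicativity, is if anything a touch cleaner than the paper's appeal to the Fadell--Husseini computation.
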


\begin{proof}
We know that  
\[
H^*(L_p^{2n+1};\ZZ/p)=\ZZ/p[u,\gb u|u^2=0, (\gb u)^{n+1}=0]
\]
where $\gb$ is the Bockstein homomorphism $\mod p$. Furthermore, $\wgt u=2$,~\cite{FH, Rud99}. So, $\wgt(u\smile(\gb u)^n)\geq 2n+1$ by \propref{p:wgt}(3). Hence, $\wgt(u\smile(\gb u)^n)=2n+1$ by \propref{p:wgt}(1). Therefore, $f^*(u\smile(\gb u)^n))\neq 0$ by \cite[Lemma 2.2]{Rud17}. 

Thus, $\cat(L_p^{2n+1}) \geq \wgt (f^*(u\smile(\gb u)^n)=2n+1$ by \propref{p:wgt}.
\end{proof}

\begin{proposition}
Let $f \colon M \to N$ be a map of degree $\pm 1$ and $N$ a sphere bundle over sphere with a cross-section. Then $\cat(M) \geq \cat(N)$.
\end{proposition}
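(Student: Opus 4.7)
The plan is to produce a detecting cohomology class for $N$ and then invoke the scheme used repeatedly in this section: bound $\cat N$ from above via a cell structure, match the bound from below via cup length / category weight, and conclude via the fact that $f^{*}$ is injective. Write $S^k\to N\xrightarrow{p} S^n$ for the bundle with cross-section $s\colon S^n\to N$; since $N$ is connected we may assume $n,k\geq 1$.

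First I would extract classes $a\in H^n(N;\ZZ)$ and $b\in H^k(N;\ZZ)$ whose product is nonzero. Since $p\circ s=\id$, $p^{*}\colon H^{*}(S^n;\ZZ)\to H^{*}(N;\ZZ)$ is a split monomorphism; let $a=p^{*}(\iota)$ for $\iota$ a generator of $H^n(S^n)$. The cross-section forces the Euler class of the associated vector bundle to vanish, so the only potentially nonzero differential of the Serre spectral sequence of $p$, namely $d_{k+1}\colon E_{k+1}^{0,k}\to E_{k+1}^{k+1,0}$, is zero, and the spectral sequence collapses at $E_2$. Choose $b\in H^k(N;\ZZ)$ restricting to a generator of $H^k(S^k)$; then $ab$ generates the top cohomology $H^{n+k}(N;\ZZ)\cong\ZZ$, so $ab\neq 0$.

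Next I would show $\cat N=2$. Since $ab\neq 0$, the cup length of $N$ is at least two, so $\cat N\geq 2$. For the upper bound, observe that $s(S^n)\cup p^{-1}(*)\subset N$ is a copy of $S^n\vee S^k$ (the section and the fiber over the basepoint meet only at $s(*)$), and serves as the $(n{+}k{-}1)$-skeleton of a CW structure on $N$ whose only other cell is a top $(n{+}k)$-cell (the complement being a single open disk). Because $S^n\vee S^k$ is a co-H-space, $\cat(S^n\vee S^k)\leq 1$; attaching one cell raises LS-category by at most one, so $\cat N\leq 2$, and $\cat N=2$.

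Finally I would apply the weight machinery. By \propref{p:wgt}(1), $\wgt(a)\geq 1$ and $\wgt(b)\geq 1$, so \propref{p:wgt}(3) gives $\wgt(ab)\geq 2$; the upper bound $\wgt(ab)\leq \cat N=2$ then makes $ab$ a detecting element for $N$. Because $\deg f=\pm 1$, \propref{p:deg} guarantees that $f^{*}\colon H^{*}(N)\to H^{*}(M)$ is injective, so $f^{*}(ab)\neq 0$. Applying \propref{p:wgt}(2) and then \propref{p:wgt}(1) gives
\[
\cat M \geq \wgt\bigl(f^{*}(ab)\bigr) \geq \wgt(ab) = \cat N.
\]
The main obstacle is verifying the upper bound $\cat N\leq 2$; the delicate point is exhibiting $s(S^n)\cup p^{-1}(*)$ as a subcomplex whose complement is a single open $(n{+}k)$-cell, after which the standard inequality $\cat(X\cup e^m)\leq \cat X+1$ finishes things off.
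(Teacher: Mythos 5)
Your argument is essentially correct, but it reaches the conclusion by a different mechanism than the paper. You share the paper's key upper-bound ingredient: the fact that $N$ has a CW structure with $(n{+}k{-}1)$-skeleton $S^n\vee S^k$ (section $\cup$ fiber) and a single top cell is exactly (3.3) of James--Whitehead \cite{JaWh}, which you assert with a sketch and should simply cite, as the paper does; from it both you and the paper get $\cat N\leq 2$. After that the routes diverge. The paper stops there and invokes the remark following Proposition \ref{prop_rud17}: for a degree $\pm1$ map with $\cat N\leq 2$ one always has $\cat M\geq\cat N$, the point being the dichotomy of Proposition \ref{p:cat=1} and Corollary \ref{c:deg} (if $\cat M=1$ then $M$, hence $N$, is a homotopy sphere), so no cohomology of $N$ is ever computed. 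You instead compute $\cat N=2$ exactly, producing classes $a=p^*\iota$ and $b$ with $ab\neq 0$ via collapse of the Serre spectral sequence (correct, granted the section splits $p^*$; for $n=1$ orientability of $N$ kills the monodromy), and then transfer the lower bound to $M$ through injectivity of $f^*$ (Proposition \ref{p:deg}). That buys slightly more than the statement asks (the exact value $\cat N=2$, detected by a product class), at the cost of the cohomological computation; the paper's argument is shorter and needs only the upper bound. Two small repairs to your write-up: the inequality $\wgt a\geq 1$, $\wgt b\geq 1$ does not come from Proposition \ref{p:wgt}(1), which is the upper bound $\wgt u\leq\cat X$ --- it is the standard fact that a nonzero positive-degree class has weight at least $1$ (equivalently, you could drop the weight language entirely and finish with the cup-length bound: $f^*(a)\smile f^*(b)=f^*(ab)\neq 0$ gives $\cat M\geq 2=\cat N$); and the phrase ``Euler class of the associated vector bundle'' presumes linearity of the sphere bundle, whereas the collapse already follows from split injectivity of $p^*$ forced by the section.
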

\begin{proof}
Let $q \colon N \to S^n$ be a sphere bundle with fiber $S^m$. So, by (3.3) in \cite{JaWh}, $N$ has a cell complex structure given by $N = e^0 \cup e^m \cup e^n \cup e^{m+n}$. Since $N$ has a cross-section, $e^0 \cup e^m \cup e^n$ is homeomorphic to $S^m \vee S^n$, see discussion after (3.3) in \cite{JaWh}. Thus $\cat(N) \leq 2$. So by the remark after Proposition \ref{prop_rud17} the conclusion follows. 
\end{proof}

\m We recall the definition of toric manifold briefly following \cite{DJ}. 
A toric manifold is a  $2n$-dimensional smooth manifold equipped with an $n$-dimensional torus action such that the action is locally resemble the $T^n$-action on $\CC^n$ and the orbit space has the structure of an $n$-dimensional simple polytope.    
\begin{proposition}\label{prop_toric_mfds}
Let $f \colon M \to N$ be a map of degree $\pm 1$ and $N$ a toric manifold. Then $\cat(M) \geq \cat(N)$.
\end{proposition}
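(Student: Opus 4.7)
The plan is to establish $\cat(N)=n$ (where $2n=\dim N$) and then to propagate a cup-length witness from $N$ to $M$ using the monomorphism $f^*$ supplied by \propref{p:deg}.

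First I would recall the Danilov--Jurkiewicz description of the cohomology of a toric manifold $N^{2n}$: the ring $H^*(N;\ZZ)$ is generated by the degree~$2$ Poincar\'e duals of the characteristic submanifolds, and a suitable $n$-fold cup product of such degree~$2$ generators is a non-zero element of $H^{2n}(N;\ZZ)$. In particular the cup-length of $N$ is at least $n$, so $\cat(N)\ge n$. Since a toric manifold is simply connected, the standard connectivity--dimension bound $\cat(X)\le\dim(X)/2$ for $1$-connected CW complexes gives $\cat(N)\le n$. Hence $\cat(N)=n$.

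Next, since $\deg f=\pm 1$, Proposition~\ref{p:deg} shows that $f^*\colon H^*(N;\ZZ)\to H^*(M;\ZZ)$ is a monomorphism. Choose $u_1,\dots,u_n\in H^2(N;\ZZ)$ with $u_1\smile\cdots\smile u_n\ne 0$ in $H^{2n}(N;\ZZ)$. Because $f^*$ is a ring homomorphism,
\[
f^*(u_1)\smile\cdots\smile f^*(u_n)=f^*(u_1\smile\cdots\smile u_n)\ne 0,
\]
so the cup-length of $M$ is at least $n$, and the cup-length lower bound for LS category yields $\cat(M)\ge n=\cat(N)$.

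The only ingredient beyond what is already recorded in the excerpt is the Danilov--Jurkiewicz presentation of $H^*(N;\ZZ)$ (which is entirely standard for toric manifolds); everything else is a direct application of \propref{p:deg} together with the cup-length bound and the connectivity--dimension estimate. I do not anticipate any serious obstacle: the argument is structurally the same as the symplectic cases treated above, with the key cohomological input being that the cup-length of a toric $N^{2n}$ already attains the dimensional cap~$n$, so that \propref{p:deg} alone is enough to transfer this estimate to $M$.
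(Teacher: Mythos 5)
Your proof is correct and is essentially the same argument the paper gives: the paper also produces degree-$2$ classes $x_1,\dots,x_n$ (via the Poincar\'e duals of the characteristic submanifolds $\mathfrak q^{-1}(F_i)$ lying over the facets through a vertex, i.e.\ the Davis--Januszkiewicz picture) with $x_1\smile\cdots\smile x_n\neq0$, notes $N$ is simply connected, and then cites \cite[Proposition 3.3]{Rud17}, which packages exactly the steps you wrote out (the dimension bound $\cat N\le n$ for a $1$-connected $2n$-manifold, the monomorphism $f^*$, and the cup-length lower bound on $M$).
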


\begin{proof}
Let $\mathfrak{q} \colon N \to P$ be the orbit map where $\dim(N)=2n$ and $P$ is an $n$-dimensional polytope. Let $v$ be a vertex of $P$. So there is unique $n$ many codimension one faces $F_1, \ldots, F_n$ of $P$ such that $v = F_1 \cap \cdots \cap F_n$.  From the discussion just above \cite[Proposition 3.10]{DJ}, we get that the codimension-2 submanifolds $\mathfrak{q}^{-1}(F_1), \ldots, \mathfrak{q}^{-1}(F_n)$ intersects transversely and $\mathfrak{q}^{-1}(v) = \mathfrak{q}^{-1}(F_1) \cap \cdots \cap \mathfrak{q}^{-1}(F_n)$ is a point. Let $x_i$ be the Poincar{\'e} dual of $\mathfrak{q}^{-1}(F_i)$.  So $x_1, x_2, \ldots, x_n \in H^2(N; \ZZ)$ such that $x_1 \smile \cdots \smile x_n $ is nonzero in $H^*(N; \mathbb{Z})$.   This implies that cup-length of $N$ is at least $n$. Also $N$ is simply connected by \cite[Cor. 3.9]{DJ}.  Now, the proposition follows from \cite[Proposition 3.3]{Rud17}.  
\end{proof}

\forget 
{\it Soumen, the proof of 4.8 is not correct (or, at least, incomlete). It can happen that $\{x_1, x_2, x_3\}$ generate $H^4(N)$ but no of 2-products is equal to $H^4(N) $. }

{\bf Soumen, what is a good reference on toric manifolds. Wikipedia is not good.}

\m By arguments similar to the proof of Proposition \ref{prop_toric_mfds} and
\cite[Lemma 4.2]{BS1}, we get the following.  
\forgotten 
 
This result can be extended to the class of locally standard torus manifolds \cite[Section 4]{MaPa} where the orbit space may not be a simple polytope always.
\begin{proposition}
Let $f \colon M \to N$ be a map of degree $\pm 1$ and $N$ is a $2n$-dimensional
locally standard torus manifold.
If the orbit space $N/T^n$ is simply connected and contains a boundary of a simple polytope, then $\cat(M) \geq \cat(N)$.
\end{proposition}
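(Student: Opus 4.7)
The plan is to mimic the toric manifold argument from \propref{prop_toric_mfds}, adapted to the more general setting of locally standard torus manifolds with a slightly weaker orbit space condition. The key geometric input will be that, since $N/T^n$ contains the boundary of a simple polytope $P$, one can still locate $n$ characteristic codimension-$2$ submanifolds meeting transversely at a single point; this is precisely the content of \cite[Lemma 4.2]{BS1}, which replaces the direct argument from \cite{DJ} used in the toric case.

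First I would invoke \cite[Lemma 4.2]{BS1} to produce, from a vertex $v$ of the simple polytope sitting inside $\partial(N/T^n)$, facets $F_1,\dots,F_n$ meeting at $v$, together with the associated characteristic submanifolds $\mathfrak q^{-1}(F_1),\dots,\mathfrak q^{-1}(F_n)$ of $N$, which are codimension-$2$, meet transversally, and whose intersection is the single orbit $\mathfrak q^{-1}(v)$. Letting $x_i\in H^2(N;\ZZ)$ denote the Poincar\'e dual of $\mathfrak q^{-1}(F_i)$, transversality and the single-point intersection force $x_1\smile\cdots\smile x_n\neq 0$ in $H^{2n}(N;\ZZ)$. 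Thus the cup-length of $N$ is at least $n$, which gives $\cat(N)\geq n$.

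Next I would bound $\cat(N)$ from above. The hypothesis that $N/T^n$ is simply connected, together with the local standardness of the $T^n$-action, implies that $N$ itself is simply connected (by the standard fibration/quotient argument for locally standard actions with simply connected orbit spaces). Since $N$ is a simply connected closed manifold of dimension $2n$, the classical bound $\cat(N)\leq \dim N/2=n$ from \cite[Theorem 1.50]{CLOT} applies, yielding $\cat(N)=n$.

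Finally, since $\deg f=\pm 1$, \propref{p:deg} tells us that $f^*\colon H^*(N;\ZZ)\to H^*(M;\ZZ)$ is a monomorphism, and so $f^*(x_1)\smile\cdots\smile f^*(x_n)=f^*(x_1\smile\cdots\smile x_n)\neq 0$. The cup-length of $M$ is therefore at least $n$, giving $\cat(M)\geq n=\cat(N)$, as required. The main obstacle is purely bookkeeping at the first step: verifying that the orbit-space hypothesis (containing the boundary of a simple polytope, rather than being a simple polytope) is exactly what \cite[Lemma 4.2]{BS1} requires in order to still extract the $n$ transversely intersecting characteristic submanifolds with a single common point; once that is in hand, the cohomological and degree-one arguments run in parallel to the toric case.
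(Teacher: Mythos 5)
Your proposal is correct and runs essentially parallel to the paper's argument: both start from the $n$ degree-$2$ classes with nonzero cup product supplied by \cite[Lemma 4.2]{BS1}, and both exploit the degree-one hypothesis to pull these back injectively to $M$. Where you diverge slightly is in the middle: the paper obtains $\cat(N)=n$ by citing \cite[Theorem 4.3]{BS1} and then concludes via \cite[Proposition 3.3]{Rud17}, whereas you re-derive $\cat(N)\le n$ from the $(r-1)$-connectivity bound \cite[Theorem 1.50]{CLOT} after arguing that $N$ itself is simply connected, and then finish with the bare cup-length lower bound. Both routes are valid and of comparable length; yours is a bit more self-contained but introduces one claim that deserves a citation rather than the phrase ``standard fibration/quotient argument'': that simple connectivity of $N/T^n$ forces simple connectivity of $N$ for a locally standard torus manifold. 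This does hold here because the orbit space has a vertex (so the action has a fixed point, and the orbit map then induces an isomorphism on $\pi_1$), but it is a nontrivial fact and the proof would be cleaner if you either cited it explicitly or simply invoked \cite[Theorem 4.3]{BS1} for $\cat(N)=n$ as the paper does, sidestepping the fundamental group entirely.
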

\begin{proof}
In the proof of \cite[Lemma 4.2]{BS1}, the authors show that there are cohomology classes  $x_1, x_2, \ldots, x_n \in H^2(N; \ZZ)$ such that $x_1 \smile \cdots \smile x_n $ is nonzero in $H^*(N; \mathbb{Z})$. Also, $\cat(N)=n$ by \cite[Theorem 4.3]{BS1}.  Then the result  follows from \cite[Proposition 3.3]{Rud17}.
\end{proof}

\begin{rem} Dranishnikov and Scott~\cite{DS} proved the inequality $\cat M \geq \cat N$ in case of the existence of a {\it normal map} $M\to N$ of degree 1, provided that $\dim N \leq 2r\cat N-3$. Here $M$ and $N$ are closed manifolds and the $N$ is $(r-1)$-connected, $r\geq 1$.    
\end{rem} 

\section{Maps of Degree 1 and Topological Complexity}\label{sec_topcom_deg_map}
% Farber \cite{Far} introduced topological complexity in connection with robot motion planning. 
In this section, we compare the topological complexity of two orientable manifolds when there is a degree one map between them.
We recall that $\TC_2(X)$ of Definition \ref{def_higher_tcn} is Farber's $\TC(X)$. 
\forget 
Let $X$ be a path connected space and $X^I$ be the free paths in $X$.

 $\alpha \colon [0, 1] \to X$. We recall the definition of sectional
category. The {\em sectional category} of a fibration $f \colon X \to Y$,
denoted by $\secat(f)$, is the least integer $m$ such that $Y$ can be 
covered by $m$ open sets $U_0, U_1,\ldots, U_m$ for each of which there exists
a homotopy section to $f$, i.e. there is a map $s_j \colon  U_j \to X$ such
that $f \circ s_j \simeq \iota_{U_j} $. 
If  no such integer exists then we put $\secat(f) = \infty$.

\m {\it Maybe to write it in the preliminary part. Also, see what I wrote in the beginning of the paper.}

\m Now consider the free path fibration as in \eqref{pi}
\[
\pi=\pi_X: X^I \to X\ts X, \quad \pi(\ga)=(\ga(0),\ga(1)), \, \ga:I\to X.
\]

Then {\em topological complexity}, denoted by $\TC(X)$,
is defined by $\TC(X)= \secat(\pi)$.
\forgotten 
 The following proposition is proved in \cite{LuMa}.

\begin{proposition}\label{prop_lub_marz}
Let $\Delta X$ be the diagonal space of the product $X \times X$. 
Then $\TC(X) = \relcat(X \times X , \Delta X)$ if $\TC(X)$ is finite. 
\end{proposition}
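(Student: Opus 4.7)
The plan is to prove the following local equivalence: an open set $U\subseteq X\ts X$ admits a (homotopy) section of the free-path fibration $\pi$ if and only if $U$ is relative categorical for the pair $(X\ts X,\Delta X)$ in the sense of \defref{d:relcat}. Since both $\TC(X)=\secat(\pi)$ and $\relcat(X\ts X,\Delta X)$ use the same ``cover by $n+1$ open sets'' normalization, this local equivalence immediately yields the stated equality.

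For the forward direction, given a section $s\colon U\to X^I$ of $\pi|_U$ (one may assume $s$ is an honest section, since $\pi$ is a fibration and any homotopy section can be lifted), I define $H\colon U\ts I\to X\ts X$ by
\[
H\bigl((a,b),t\bigr)=\bigl(a,\,s(a,b)(t)\bigr).
\]
Since $s(a,b)$ is a path from $a$ to $b$, one has $H((a,b),0)=(a,a)\in\Delta X$ and $H((a,b),1)=(a,b)$, so $H$ witnesses that $U$ is relative categorical. For the converse, suppose $H=(H_1,H_2)\colon U\ts I\to X\ts X$ is a homotopy with $H(-,1)$ equal to the inclusion of $U$ and $H(U,0)\subseteq\Delta X$, so that $H_1((a,b),0)=H_2((a,b),0)$ for each $(a,b)\in U$. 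Define $s(a,b)\in X^I$ by concatenating the reverse of $H_1((a,b),-)$ with $H_2((a,b),-)$:
\[
s(a,b)(t)=\begin{cases} H_1\bigl((a,b),1-2t\bigr), & 0\leq t\leq\tfrac12,\\[2pt] H_2\bigl((a,b),2t-1\bigr), & \tfrac12\leq t\leq 1. \end{cases}
\]
Then $s(a,b)$ is a path from $a$ to $b$, so $\pi\circ s=\iota_U$ set-theoretically, and $s$ is a section over $U$.

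The only real technicality is continuity of $s\colon U\to X^I$, which follows from the exponential law for the compact-open topology applied to the continuous adjoint $U\ts I\to X$, using the gluing lemma at $t=\tfrac12$ to patch the two halves together. I expect no serious obstacle; the finiteness hypothesis on $\TC(X)$ merely guarantees that both sides are natural numbers rather than $\infty$, making the asserted equality meaningful as an equality of integers.
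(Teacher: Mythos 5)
Your argument is correct. The paper does not actually give its own proof of this proposition but simply cites Lubawski--Marzantowicz; your local equivalence (an open $U\subseteq X\times X$ admits a section of $\pi$ over $U$ if and only if $U$ is relative categorical for $(X\times X,\Delta X)$) is precisely the mechanism behind the result in that reference, and both directions of your construction check out: the path $s(a,b)$ you build runs from $a$ to $b$ with the gluing point matching because $H(U,0)\subseteq\Delta X$, and continuity of $s$ is handled correctly by the exponential law since $I$ is compact Hausdorff. You are also right that, given this local equivalence, the equality holds term-by-term over open covers, so the finiteness hypothesis is merely a convention to keep the assertion an equality of integers rather than of extended naturals; the statement is true unconditionally.
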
 
 
Analogously to the Question \ref{Rud_conj}, one may ask the following.

\begin{question}\label{ques_top_comp_deg_1}
Let $n \geq 2$ and $M, N$  two closed orientable manifolds such that there exists a map
$f \colon M \to N$ of degree $\pm 1$. Is it true that $\TC_n(M) \geq \TC_n(N)$?
\end{question}

If $\dim M, N=1 $  then it is clear. In the remaining,
we show that this question has affirmative answer when dimension of 
$M,N$ are 2 and 3. Note that if the degree of the map in
Question \ref{ques_top_comp_deg_1} is more than one then the question
has negative answer, for example one can take degree $k$ map from 3-sphere
to lens space. We also analyse this question on several types of oriented
manifolds.  

\forget 
\m {\it Below  I modify the previous text on zero-divisors, etc. I copeid our previous text by red letters.}
\forgotten

\begin{df}
Given a space $X$, a commutative ring $R$, and a positive integer $n$, we say that $u\in H^*(X^n;R)$ is a {\em zero-divisor class of grade $n$} if $d_n^*u=0$. Here $d_n \colon X \to X^n$ is the diagonal map. A {\em zero-divisors-cup-length of grade $n$} of $H^*(X; R)$, denoted by $\zcl_n(X)=\zcl_n^R(X)$ is the maximal $k$ such that $u_1\smile \cdots \smile u_k\neq 0$ provided each $u_i$ is a zero-divisor class of grade $n$.
\end{df}

We note that the cup product in $H^*(X;R)$ yields a homomorphiam
\[
\CD
 (H^*(X;R))^{\otimes n} @>>> H^*(X^n;R) @>d_n^*>> H^*(X;R).
\endCD
\]
We also denote the above composition by $d_n^*$. 
\begin{thm} \label{t:cup}
For any commutative ring $R$ we have the inequality $\TC_n(X) \geq \zcl_n^R(X)$.
\end{thm}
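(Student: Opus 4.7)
Take $f=\id_X$ in \defref{def:TC_map}. Then $\phi_n=\id_{X^n}$ and $\ov\phi_n$ is the identity self-map of the pair $(X^n,\Delta_n(X))$, so
\[
\TC_n(X)=\relcat(\ov\phi_n)=\relcat(X^n,\Delta_n(X)).
\]
The strategy is to feed this into \theoref{thm:cohom_low_bnd} and then translate the resulting nilpotency bound for relative cohomology into the zero-divisor condition of the statement.

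Applying \theoref{thm:cohom_low_bnd} to the identity $\ov\phi_n \colon (X^n,\Delta_n X) \to (X^n,\Delta_n X)$ gives $\relcat(\ov\phi_n)\geq \mathrm{nil}\,\mathrm{Im}(\ov\phi_n^*)=\mathrm{nil}\,H^*(X^n,\Delta_n X;R)$, because $\ov\phi_n^*$ is the identity. Hence it suffices to prove
\[
\mathrm{nil}\,H^*(X^n,\Delta_n X;R)\;\geq\;\zcl_n^R(X).
\]

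To establish this, I would use the long exact cohomology sequence of the pair $(X^n,\Delta_n X)$:
\[
\cdots \to H^{*}(X^n,\Delta_n X;R) \xrightarrow{j^*} H^*(X^n;R)\xrightarrow{d_n^*} H^*(\Delta_n X;R)\to\cdots .
\]
By exactness, a class $u\in H^*(X^n;R)$ is a zero-divisor class of grade $n$ (i.e.\ $d_n^*u=0$) if and only if $u=j^*(\tilde u)$ for some lift $\tilde u\in H^*(X^n,\Delta_n X;R)$. The key ring-theoretic input is that $j^*$ is a homomorphism from the relative cup product ring to the absolute one: this is standard and follows from the compatibility of the relative cup pairing
\[
H^*(X^n,\Delta_n X;R)\otimes H^*(X^n,\Delta_n X;R)\to H^*(X^n,\Delta_n X\cup \Delta_n X;R)=H^*(X^n,\Delta_n X;R)
\]
with the absolute one under restriction.

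Now, given zero-divisor classes $u_1,\ldots,u_k$ of grade $n$ with $u_1\smile\cdots\smile u_k\neq 0$, choose lifts $\tilde u_i\in H^*(X^n,\Delta_n X;R)$ with $j^*\tilde u_i=u_i$. Since $j^*$ is a ring map,
\[
j^*(\tilde u_1\smile\cdots\smile\tilde u_k)=u_1\smile\cdots\smile u_k\neq 0,
\]
so the relative product $\tilde u_1\smile\cdots\smile\tilde u_k$ is nonzero in $H^*(X^n,\Delta_n X;R)$. Therefore $\mathrm{nil}\,H^*(X^n,\Delta_n X;R)\geq k=\zcl_n^R(X)$, and combining with the bound from \theoref{thm:cohom_low_bnd} yields $\TC_n(X)\geq \zcl_n^R(X)$, completing the argument.

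The only genuinely non-formal step is the reduction of \theoref{thm:cohom_low_bnd} to the zero-divisor language; once one recognises that the image of $j^*$ is exactly $\ker d_n^*$ and that $j^*$ is multiplicative, the rest is bookkeeping.
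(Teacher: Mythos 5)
Your proof is correct in its conclusions but takes a genuinely different route from the paper's. The paper's own proof is a one-liner: it applies Schwarz's Theorem 4 (the classical cohomological lower bound for sectional category, $\secat(p)\geq$ the length of a nonzero product of classes in $\ker p^*$) to the fibration $\pi_n \colon X^I \to X^n$ of \eqref{pin}, noting that $\ker\pi_n^*=\ker d_n^*$ because $\pi_n$ and $d_n$ agree up to the homotopy equivalence $X\simeq X^I$. You instead route the argument through the $\relcat$ machinery developed in \secref{sec_rel_cat_weight}: identify $\TC_n(X)$ with $\relcat(X^n,\Delta_n X)$, invoke the paper's own \theoref{thm:cohom_low_bnd} for the identity map of pairs, and then translate the nilpotency of $H^*(X^n,\Delta_n X;R)$ into the zero-divisor language by lifting classes along $j^*$ in the long exact sequence. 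The translation via exactness ($\ker d_n^*=\mathrm{Im}\,j^*$) and multiplicativity of $j^*$ is clean and correct. What your approach buys is that it makes \theoref{thm:cohom_low_bnd} visibly subsume the classical Schwarz bound in this setting, keeping the argument internal to the paper's toolkit; what it costs is an extra identification that the paper never states for general $n$.

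That identification is where you should be more careful. You write $\TC_n(X)=\relcat(\ov\phi_n)=\relcat(X^n,\Delta_n X)$, i.e.\ you conflate $\TC_n(X)$ (which \defref{def_higher_tcn} defines as $\secat(\pi_n)$) with $\TC_n(\id_X)$ (which \defref{def:TC_map} defines as $\relcat(X^n,\Delta_n X)$). The paper only records their equality for $n=2$ and under a finiteness hypothesis (\propref{prop_lub_marz}). Fortunately your argument only needs the inequality $\TC_n(X)\geq \relcat(X^n,\Delta_n X)$, and this direction is exactly \propref{prop_tcxy} applied to $f=\id_X$ (a local section of $\pi_n$ over $U\subset X^n$ produces a relative-categorical deformation of $U$ into $\Delta_n X$, so $\relcat(X^n,\Delta_n X)\leq\secat(\pi_n)$). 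So replace the unjustified equality by the inequality, cite \propref{prop_tcxy}, and the proof is complete and valid.
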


\begin{proof}
This theorem follows from~\cite[Theorem 4]{sv} if we replace the fibration $p \colon E \to B$ in the \cite[Theorem 4]{sv} by the fibration $\pi_n \colon X^I \to X^n$ in \eqref{pin}.
\end{proof}

\begin{prop}\label{p:injectivity of zcl}
Let  $f \colon M \to N$ is a map of degree $\pm 1$. Then for any commutative ring $R$ we have $\zcl_n(M) \geq \zcl_n(N)$. In particular, if $\TC_n(N)=\zcl_n(N)$ then $\TC_n(M)\geq \TC_n(N)$.
\end{prop}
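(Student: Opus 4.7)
The strategy is to push zero-divisor classes forward under $f^*$ (more precisely under $(f^n)^*$) and check that the key features are preserved: being a zero-divisor class of grade $n$, the cup product structure, and non-vanishing of the top product.

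\textbf{Step 1: Injectivity on cohomology.} The map $f^n=f\ts\cdots\ts f \colon M^n\to N^n$ is a map between closed oriented manifolds of degree $(\pm 1)^n=\pm 1$. Hence by \propref{p:deg} the induced homomorphism
\[
(f^n)^* \colon H^*(N^n;R)\to H^*(M^n;R)
\]
is a monomorphism. (This is the only place where degree $\pm 1$ enters; everything else is pure naturality.)

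\textbf{Step 2: Zero-divisor classes are preserved.} The naturality of the diagonal gives a commutative square
\[
\begin{tikzcd}
M \arrow{r}{f} \arrow[swap]{d}{d_n^M} & N \arrow{d}{d_n^N} \\
M^n \arrow{r}{f^n} & N^n,
\end{tikzcd}
\]
so if $u\in H^*(N^n;R)$ satisfies $(d_n^N)^*u=0$, then $(d_n^M)^*(f^n)^*u=f^*(d_n^N)^*u=0$, and thus $(f^n)^*u$ is a zero-divisor class of grade $n$ for $M$.

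\textbf{Step 3: Cup-length argument.} Suppose $u_1,\ldots,u_k$ are zero-divisor classes of grade $n$ for $N$ with $u_1\smile\cdots\smile u_k\neq 0$, where $k=\zcl_n(N)$. By Step 2 each $(f^n)^*u_i$ is a zero-divisor class of grade $n$ for $M$. By functoriality of the cup product,
\[
(f^n)^*u_1\smile\cdots\smile (f^n)^*u_k=(f^n)^*\bigl(u_1\smile\cdots\smile u_k\bigr),
\]
and by Step 1 this element is non-zero in $H^*(M^n;R)$. Hence $\zcl_n(M)\geq k=\zcl_n(N)$.

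\textbf{Step 4: The $\TC$ statement.} Combining with \theoref{t:cup},
\[
\TC_n(M)\geq\zcl_n(M)\geq\zcl_n(N)=\TC_n(N),
\]
where the last equality is the hypothesis.

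The only real content is Step 1; once $(f^n)^*$ is known to be injective, the rest is naturality. The mild obstacle is to be sure that \propref{p:deg} applies with arbitrary coefficient ring $R$ (which it does, since Poincar\'e duality and the transfer/umkehr argument of~\cite[Theorem V.2.13]{Rud98} work with any coefficients for oriented closed manifolds).
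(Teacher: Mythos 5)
Your proof is correct and takes essentially the same route as the paper: pull back zero-divisor classes through $(f^n)^*$, use naturality of the diagonal to see they remain zero-divisor classes of grade $n$, and use injectivity of $(f^n)^*$ (coming from $\deg(f^n)=\pm 1$) to keep the top cup product non-zero. The paper's own proof is terser — it simply cites~\cite[Lemma 2.2]{Rud17} for the non-vanishing and leaves Step 2 (that $(f^n)^*u_i$ is again a zero-divisor class) implicit — but the content is identical; you have just supplied the naturality diagram and flagged the coefficient-ring point that the paper takes for granted.
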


\begin{proof}
Let $\zcl_n(N)=k$. Consider elements $u_1, \ldots, u_k\in H^*(N^n;R)$ with $d^*_n(u_i) = 0$ for $i=1, \ldots, k$ and such that $u_1\smile \cdots \smile u_k\neq 0$. Now we have $f^*(u_1)\smile \cdots \smile f^*(u_k)\neq 0$ by ~\cite[Lemma 2.2]{Rud17}. So the result follows. 
\end{proof}

\begin{prop}
Let $M\to S^{k_1} \ts S^{k_2} \ts \cdots \ts S^{k_m}, k_i>0$ be a map degree $\pm 1$. Then
\[
\TC_n (M) \geq \TC_n(S^{k_1}\ts S^{k_2}\ts \cdots \ts S^{k_m}).
\]
\end{prop}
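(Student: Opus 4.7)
The plan is to apply \propref{p:injectivity of zcl} to $f\colon M\to N$, where $N := S^{k_1}\ts\cdots\ts S^{k_m}$. Since $f$ has degree $\pm 1$, that proposition reduces the desired inequality $\TC_n(M) \geq \TC_n(N)$ to the single equality $\TC_n(N) = \zcl_n(N)$ for some coefficient ring; the entire content of the proof is thus the sharpness of the cohomological lower bound of \theoref{t:cup} on a product of spheres.

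To establish $\TC_n(N) = \zcl_n(N)$, I would work with rational coefficients. The classical computation of higher topological complexity of spheres gives $\TC_n(S^k) = n-1$ when $k$ is odd and $\TC_n(S^k) = n$ when $k$ is even, and in both cases these values are realized by zero-divisor cup products using classes of the form $u_j := 1\ts\cdots\ts a\ts\cdots\ts 1 - a\ts 1\ts\cdots\ts 1 \in H^*((S^k)^n;\QQ)$ (with the generator $a \in H^k(S^k;\QQ)$ placed in the $j$-th slot), supplemented in the even case by an auxiliary zero-divisor such as $a\ts a\ts 1\ts\cdots\ts 1$ that exploits $a^2 = 0$. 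By the K\"unneth theorem, pulling back the analogous classes on each sphere factor along the projections $N^n \to (S^{k_i})^n$ and multiplying yields a non-zero element of $H^*(N^n;\QQ)$ whose cup length equals $\sum_i \TC_n(S^{k_i})$. Hence $\zcl_n(N) \geq \sum_i \TC_n(S^{k_i})$.

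The opposite inequality $\TC_n(N) \leq \sum_i \TC_n(S^{k_i})$ follows from the product formula $\TC_n(X\ts Y) \leq \TC_n(X) + \TC_n(Y)$ for higher topological complexity. Together with \theoref{t:cup} this yields the equality $\TC_n(N) = \zcl_n(N)$, and \propref{p:injectivity of zcl} then immediately delivers $\TC_n(M) \geq \TC_n(N)$. The main technical step is the K\"unneth bookkeeping: one has to ensure that the chosen product of zero-divisor classes survives non-trivially in $H^*(N^n;\QQ)$ and realises the correct cup length; working over $\QQ$ (or an appropriate $\ZZ/p$ matched to the parities of the $k_i$) sidesteps any sign or divisibility obstruction, after which the argument is a direct combination of \theoref{t:cup}, \propref{p:injectivity of zcl}, and classical computations for products of spheres.
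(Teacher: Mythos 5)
Your proof follows essentially the same route as the paper's: reduce the statement to the sharpness $\TC_n(N)=\zcl_n(N)$ for $N=S^{k_1}\ts\cdots\ts S^{k_m}$, and then invoke \propref{p:injectivity of zcl} together with \theoref{t:cup}. The only difference is that where you reconstruct the equality via explicit K\"unneth zero-divisors and the product inequality $\TC_n(X\ts Y)\leq\TC_n(X)+\TC_n(Y)$, the paper simply cites the computations $\TC_n(N)=m(n-1)+\ell$ and $\zcl_n(N)\geq m(n-1)+\ell$ from the Basabe--Gonz\'alez--Rudyak--Tamaki paper, so the argument and its logical skeleton coincide.
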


\begin{proof}
Put $N=S^{k_1}\ts S^{k_2}\ts \cdots \ts S^{k_m}$. Then, by \cite[Cor. 3.12]{BGRT}, we have $\TC_n(N) = m(n-1)+ \ell $ where $\ell$ is the number of even dimensional spheres. (Note that the invariant $\cl(X,n)$ in~\cite{BGRT} is exactly our invariant $\zcl_n(X)=\zcl_n^{\ZZ}(X)$). By~\cite[Theorem 3.10]{BGRT} we have  $\zcl_n(S^{2k+1})=n-1$ and $\zcl_n(S^{2k})=n$, and, moreover, the same theorem yields the inequality
$
\zcl_n(N)\geq m(n-1)+ \ell =\TC_n(N)
$
by induction. So, $\TC_n(N) = \zcl_n(N)$ by \theoref{t:cup}. Thus, by \propref{p:injectivity of zcl} we have $\TC_n(M)\geq\zcl_n(M)\geq \zcl_n (N)=\TC_n(N)$.
\end{proof}

\begin{proposition}
Let $M$ be a stably parallelizable 14-dimensional. Consider the  exceptional Lie group $G_2$, see e.g. \cite{Adams}. If $f \colon M \to G_2$ is a map of degree $\pm 1$ then
$\TC_n(M) \geq \TC_n(G_2)$.  
\end{proposition}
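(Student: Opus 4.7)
The plan is to reduce the assertion to \propref{p:injectivity of zcl} by establishing the equality
\[
\TC_n(G_2) \;=\; \zcl_n^{\mathbb{Z}/2}(G_2).
\]
Once this is shown, the chain
\[
\TC_n(M) \;\geq\; \zcl_n^{\mathbb{Z}/2}(M) \;\geq\; \zcl_n^{\mathbb{Z}/2}(G_2) \;=\; \TC_n(G_2)
\]
concludes the proof, the first step being \theoref{t:cup} and the second being \propref{p:injectivity of zcl}. Note that only the degree $\pm 1$ hypothesis is actually used in this chain; the stable parallelizability of $M$ is kept for consistency with the surrounding propositions of this section.

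For the upper bound on $\TC_n(G_2)$, I would first recall Borel's description
\[
H^*(G_2;\mathbb{Z}/2) \;=\; \mathbb{Z}/2[u_3,u_5]/(u_3^4,\,u_5^2), \qquad |u_3|=3,\ |u_5|=5,
\]
whose top class $u_3^3 u_5$ realises mod $2$ cup-length $4$. The dimension/connectivity estimate $\cat(X) \leq \dim X/(\mathrm{conn}(X)+1)$, applied to the $2$-connected $14$-manifold $G_2$, gives $\cat(G_2) \leq 14/3 < 5$; combined with the cup-length lower bound this forces $\cat(G_2) = 4$. Since $G_2$ is a Lie group, the standard inequality $\TC_n(G) \leq \cat(G^{n-1})$ for a topological group (see e.g. \cite{BGRT}), together with subadditivity of $\cat$ on products, gives $\TC_n(G_2) \leq 4(n-1)$.

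The matching lower bound is produced by exhibiting explicit zero-divisors. Writing $p_i\colon G_2^n \to G_2$ for the $i$-th projection, I would set
\[
a_j := p_j^* u_3 + p_1^* u_3, \qquad b_j := p_j^* u_5 + p_1^* u_5, \qquad j = 2, \ldots, n.
\]
Each of these vanishes under $d_n^*$, so they are zero-divisors of grade $n$; their cup-product $P := \prod_{j=2}^n a_j^3\, b_j$ assembles $4(n-1)$ such classes and lies in $H^{14(n-1)}(G_2^n;\mathbb{Z}/2)$. To verify $P \neq 0$ I would multiply by $p_1^*(u_3^3 u_5)$ and observe that the relations $u_3^4 = u_5^2 = 0$ kill every expansion term that carries a nonzero power of $p_1^* u_3$ or $p_1^* u_5$; the sole surviving contribution is $\prod_{j=2}^n p_j^*(u_3^3 u_5)$, so the total product equals $\prod_{i=1}^n p_i^*(u_3^3 u_5)$, the mod $2$ fundamental class of $G_2^n$. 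This gives $\zcl_n^{\mathbb{Z}/2}(G_2) \geq 4(n-1)$, and combined with the general bound $\zcl_n \leq \TC_n$ and the previous paragraph it pins both invariants at $4(n-1)$.

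The only real obstacle is the bookkeeping in this mod $2$ cup-product calculation, but it is entirely local: after pairing $P$ with $p_1^*(u_3^3 u_5)$ the nilpotence relations force each factor $a_j^3 b_j$ to contribute its lowest $u_3^{(1)}, u_5^{(1)}$ term, namely $u_3^{(j)3} u_5^{(j)}$, and the outcome is visibly the fundamental class. An alternative route, more in the spirit of the earlier propositions, would be to detect $\TC_n(G_2)$ via a category-weight argument (\propref{p:wgt}) in some generalized cohomology theory where $G_2$ has a detecting element; there the stable parallelizability of $M$ would enter through $S$-orientability to keep $f^*$ monic, but the mod $2$ approach sketched above bypasses this completely.
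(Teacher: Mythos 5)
Your argument is correct and takes a genuinely different route from the paper's. The paper proceeds through LS-category: it invokes \cite[Prop.~5.4]{Rud17} --- where the stable parallelizability of $M$ enters --- to obtain $\cat(M) \geq \cat(G_2)$, and then combines the Lie-group identity $\TC_n(G_2) = \cat(G_2^{n-1})$ with the general bound $\cat(X^{n-1}) \leq \TC_n(X)$ from \cite{BGRT}. You instead work entirely at the level of mod-$2$ zero-divisor cup-length, pinning down $\TC_n(G_2) = \zcl_n^{\ZZ/2}(G_2) = 4(n-1)$ by sandwiching: $\TC_n(G_2)\leq \cat(G_2^{n-1})\leq 4(n-1)$ from above, and from below by the explicit zero-divisor product $\prod_{j=2}^n a_j^3 b_j$ built from $H^*(G_2;\ZZ/2)=\ZZ/2[u_3,u_5]/(u_3^4,u_5^2)$, whose nonvanishing you check by multiplying against $p_1^*(u_3^3u_5)$ and letting the nilpotence relations prune every cross term, leaving the mod-$2$ fundamental class of $G_2^n$. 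Then \theoref{t:cup} and \propref{p:injectivity of zcl} close the chain. Your route buys something the paper's does not: as you correctly observe, it uses only the degree-$\pm 1$ hypothesis (via $(f^{\times n})^*$ monic on $H^*(-;\ZZ/2)$), so the stable parallelizability of $M$ is in fact superfluous for this statement; it also stays within ordinary cohomology and avoids the detecting-element and category-weight machinery that the paper relies on elsewhere in this section.
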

\begin{proof}

We have  $\cat(M) \geq \cat(G_2)$ by~\cite[Prop. 5.4]{Rud17}. Furthermore,
\cite[Theorem 3.5]{BGRT} says that $\TC_n(G_2) = (\cat(G_2))^{n-1}$. So proposition
follows from the inequality $(\cat(X))^{n-1} \leq \TC_n(X)$ for all $X$,~\cite[Cor. 3.3]{BGRT}.
\end{proof}

\begin{proposition}
Let $M$ be a closed connected smooth manifold and $f \colon M \to SO(n)$
is a degree one map, then $\TC(M) \geq \TC(SO(n))$ if $n \leq 9$.  
\end{proposition}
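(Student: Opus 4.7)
My plan is to reduce the $\TC$ inequality to the corresponding $\cat$ inequality, then handle the latter via the mod-$2$ cup-length of $SO(n)$. Since $SO(n)$ is a compact Lie group, hence an $H$-space, Farber's theorem gives $\TC(SO(n))=\cat(SO(n))$; combined with the universal inequality $\TC(M)\geq\cat(M)$, the problem reduces to showing $\cat(M)\geq\cat(SO(n))$.

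For this step I would appeal to Schweitzer's classical computation (with subsequent refinements by Singhof and by Iwase--Mimura), which shows that for $n\leq 9$ the LS-category $\cat(SO(n))$ is \emph{realized} by the mod-$2$ cup-length: there exist classes $a_1,\ldots,a_k\in\widetilde H^*(SO(n);\ZZ/2)$, with $k=\cat(SO(n))$, whose cup product $a_1\smile\cdots\smile a_k$ is nonzero. It is precisely this cup-length realizability of $\cat$ that is known only in the range $n\leq 9$, and that is what forces the hypothesis of the proposition.

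The last step uses the degree hypothesis. By \propref{p:deg} (more precisely, the version for arbitrary coefficient modules proved in~\cite[Lemma 2.2]{Rud17}), the induced map $f^*\colon H^*(SO(n);\ZZ/2)\to H^*(M;\ZZ/2)$ is a monomorphism, so
\[
f^*(a_1)\smile\cdots\smile f^*(a_k)=f^*(a_1\smile\cdots\smile a_k)\neq 0.
\]
Hence the mod-$2$ cup-length of $M$ is at least $k$, which forces $\cat(M)\geq k=\cat(SO(n))$. Chaining the inequalities yields $\TC(M)\geq\cat(M)\geq\cat(SO(n))=\TC(SO(n))$, as desired.

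The main obstacle will be the second step, namely establishing (or citing precisely) that $\cat(SO(n))$ equals the mod-$2$ cup-length for each $n\in\{2,\ldots,9\}$; once this is granted, the rest of the argument is essentially formal and parallels the proofs of the preceding propositions for $G_2$ and lens spaces. A cleaner alternative, more in the spirit of this section, is to phrase everything in the zero-divisor language of \theoref{t:cup} and \propref{p:injectivity of zcl}: the \emph{zero-divisor classes} $\bar a_i:=1\otimes a_i-a_i\otimes 1\in H^*(SO(n)\times SO(n);\ZZ/2)$ are zero-divisors of grade $2$, and a degree count shows that $\bar a_1\smile\cdots\smile\bar a_k\neq 0$, so $\zcl_2(SO(n))=\cat(SO(n))=\TC(SO(n))$; the conclusion then drops out of \propref{p:injectivity of zcl} immediately.
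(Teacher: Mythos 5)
Your proof is correct, and the chain $\TC(M)\geq\cat(M)\geq\cat(SO(n))=\TC(SO(n))$ is exactly the one the paper uses; the difference is one of granularity. The paper cites~\cite[Theorem~5.5]{Rud17} as a black box for the middle inequality $\cat(M)\geq\cat(SO(n))$, while you unpack it via the mod-$2$ cup-length of $SO(n)$ (which realizes $\cat(SO(n))$ for $n\leq 9$) together with the injectivity of $f^*$ on mod-$2$ cohomology; this is essentially the content of that citation, so you are filling in a reference rather than taking a different route, and your observation that cup-length realizability is what forces $n\leq 9$ usefully explains a hypothesis the paper leaves unmotivated. Your ``cleaner alternative'' in the $\zcl_2$ language is also correct: over $\ZZ/2$, the bidegree $\bigl(|a_1|+\cdots+|a_k|,\,0\bigr)$ component of $\bar a_1\smile\cdots\smile\bar a_k$ is $a_1\smile\cdots\smile a_k\otimes 1\neq 0$, so $\zcl_2(SO(n))$ is squeezed between the mod-$2$ cup-length and $\TC(SO(n))=\cat(SO(n))$, forcing $\zcl_2(SO(n))=\TC(SO(n))$, and \propref{p:injectivity of zcl} then concludes immediately.
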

\begin{proof}
We have $\cat(M) \geq \cat(SO(n))$ if $n \leq 9$ by~\cite[Theorem 5.5]{Rud17}. Furthermore, \cite[Theorem 3.5]{BGRT} says that $\TC(G) = \cat(G)$ for a connected Lie group. So proposition follows from the inequality $(\cat(X))^{n-1} \leq \TC_n(X)$.
\end{proof}

\begin{proposition}
If $f \colon M \to N$ is a map of degree $\pm 1$ and~$\TC(N)=2,$ then $\TC(M) \geq 2$. 
\end{proposition}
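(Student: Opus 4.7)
The plan is to argue by contradiction: assume $\TC(M)\le 1$ and derive that $\TC(N)\ne 2$. The key input is the chain of inequalities $\cat(M)\le \TC(M)$ (a special case of \propref{prop_cat_tc} applied to the identity), together with \propref{p:cat=1} and \corref{c:deg}, and Farber's computation $\TC(S^n)=1$ for $n$ odd and $\TC(S^n)=2$ for $n$ even (\cite[Theorem 8]{Far}).

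Suppose $\TC(M)\le 1$. Then $\cat(M)\le 1$. If $\cat(M)=0$, then $M$ is contractible and, being a closed connected manifold, $M$ must be a point; a degree $\pm 1$ map $M\to N$ then forces $N$ to be a point as well, so $\TC(N)=0$, contradicting $\TC(N)=2$. Hence we may assume $\cat(M)=1$, so that, by \propref{p:cat=1}, $M$ is a homotopy sphere. By \corref{c:deg}, $N$ is then also a homotopy sphere; and since $f\colon M\to N$ has degree $\pm 1$ we have $\dim M=\dim N=:n$.

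Now split according to the parity of $n$. If $n$ is odd, then $N\simeq S^n$ gives $\TC(N)=\TC(S^n)=1$ by homotopy invariance of $\TC$, contradicting $\TC(N)=2$. If $n$ is even, then $M\simeq S^n$ gives $\TC(M)=\TC(S^n)=2$, contradicting our assumption $\TC(M)\le 1$. In either case we reach a contradiction, so $\TC(M)\ge 2$, as required.

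There is no real obstacle here: the argument is purely a matter of combining results already assembled in the excerpt (\propref{p:cat=1}, \corref{c:deg}, the inequality $\cat\le \TC$, and Farber's theorem on spheres). The one point worth flagging in the write-up is the degenerate case $\cat(M)=0$, which must be disposed of before invoking \propref{p:cat=1}.
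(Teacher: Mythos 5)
Your proof is correct, but it takes a genuinely different route from the paper's. The paper's argument is shorter: it invokes the theorem of Grant--Lupton--Oprea (\cite{GLO}) that a space with $\TC=1$ is an odd-dimensional homotopy sphere, applies it to $M$, and then uses \corref{c:deg} to conclude that $N$ is an odd-dimensional homotopy sphere, which has $\TC=1$, contradicting $\TC(N)=2$. You instead avoid the GLO result entirely: you pass through $\cat(M)\le \TC(M)$, use the much more elementary fact (\propref{p:cat=1}) that a closed manifold with $\cat=1$ is a homotopy sphere, transport it to $N$ via \corref{c:deg}, and then finish with the parity case split using Farber's computation of $\TC(S^n)$. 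The tradeoff is clear: the paper's proof is a one-liner at the cost of a nontrivial external input; yours is self-contained modulo ingredients already assembled in the paper, at the cost of the parity bifurcation and the degenerate case $\cat(M)=0$ (which you correctly flag and dispatch). Both are valid; your version is arguably more elementary, since it replaces the characterization of $\TC=1$ spaces by the older characterization of $\cat=1$ closed manifolds plus the parity behaviour of $\TC(S^n)$.
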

\begin{proof}
If $\TC(M) = 1$ then $M$ must be   an odd-dimensional homotopy sphere, see \cite{GLO}. Thus, $N$ must  be an odd-dimensional homotopy sphere by \propref{c:deg}.
\end{proof}

\begin{proposition}
If $f \colon M \to N$ is a map of degree $\pm 1$ and~$\dim N=2,$ then $\TC(M) \geq \TC(N)$. 
\end{proposition}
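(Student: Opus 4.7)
The plan is to reduce to the classification of closed orientable surfaces and to handle each case using either the preceding proposition (when $\TC(N)=2$) or the zero-divisor cup-length bound from \propref{p:injectivity of zcl}. Recall that a closed connected orientable $2$-manifold $N$ is homeomorphic to $S^2$, to the torus $T^2$, or to a surface $\Sigma_g$ of genus $g\geq 2$. Since $\deg f=\pm 1$ forces $f_*\pi_1(M)\to \pi_1(N)$ to be surjective, nothing further needs to be said about orientability or connectedness of $M$.

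For $N=S^2$ we have $\TC(S^2)=2$, and for $N=T^2=S^1\times S^1$ we have $\TC(T^2)=2$ as well. In both cases the immediately preceding proposition (``if $\TC(N)=2$ then $\TC(M)\geq 2$'') gives $\TC(M)\geq \TC(N)$.

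For $N=\Sigma_g$ with $g\geq 2$ the plan is to invoke the classical computation of Farber--Tabachnikov showing $\TC(\Sigma_g)=4$, where the lower bound is furnished by a length-$4$ product of zero-divisor classes in $H^*(\Sigma_g\times \Sigma_g;\ZZ)$; explicitly, if $a_1,b_1,\ldots,a_g,b_g\in H^1(\Sigma_g;\ZZ)$ is a symplectic basis, the four classes $\overline{a}_i:=1\otimes a_i-a_i\otimes 1$ and $\overline{b}_j:=1\otimes b_j-b_j\otimes 1$ (for any choice of indices $i,j$ with $a_i\smile b_j\neq 0$) are zero-divisors of grade $2$, and their four-fold cup product is nonzero. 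Hence $\zcl_2(\Sigma_g)=4=\TC(\Sigma_g)$, and \propref{p:injectivity of zcl} yields
\[
\TC(M)\geq \zcl_2(M)\geq \zcl_2(\Sigma_g)=\TC(\Sigma_g).
\]

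There is essentially no obstacle beyond quoting the above facts: the classification of orientable surfaces reduces the problem to three model cases, the two spherical/toroidal cases are handled by the prior proposition, and the higher-genus case is handled by injectivity of $f^*$ on cohomology (\cite[Lemma 2.2]{Rud17}) combined with Farber's realization of $\TC(\Sigma_g)$ by zero-divisor cup-length. The only mild subtlety to verify is that the four-fold product computed by Farber lies in the image of $f^*$ of the corresponding product for $M$, but this is automatic since $f^*$ is a ring homomorphism and commutes with the diagonal $d_2^*$, so zero-divisor classes pull back to zero-divisor classes of the same grade.
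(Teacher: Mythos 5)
Your strategy is essentially the paper's, just with an unnecessary case split: the paper's proof is the one\--line observation that every closed orientable surface $N$ satisfies $\TC(N)=\zcl_2(N)$, so \propref{p:injectivity of zcl} applies directly. Your treatment of $S^2$ and $T^2$ via the preceding proposition is correct, but it buys nothing, since $\zcl_2(S^2)=2=\TC(S^2)$ and $\zcl_2(T^2)=2=\TC(T^2)$ as well, so the same lemma covers all three cases uniformly without invoking the classification of surfaces.

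There is, however, an error in the ``explicit'' witness you give for $\Sigma_g$, $g\geq 2$. In a symplectic basis the condition $a_i\smile b_j\neq 0$ forces $j=i$, so your recipe produces only the two zero-divisors $\bar a_i$ and $\bar b_i$, and any four-fold product of them must repeat factors. But for a class $u$ of degree $1$ one has $\bar u^2=(1\otimes u-u\otimes 1)^2=u\otimes u-u\otimes u=0$ (using $u^2=0$ and graded commutativity), hence $\bar a_i^{\,2}\smile\bar b_i^{\,2}=0$; this vanishing is exactly why $\zcl_2(T^2)=2$ rather than $4$. The correct length-four product needs classes from two distinct handles, e.g.
\[
\bar a_1\smile\bar b_1\smile\bar a_2\smile\bar b_2 \;=\; 2\,\omega\otimes\omega\;\neq\;0,
\]
where $\omega\in H^2(\Sigma_g)$ is the fundamental cohomology class --- and this is precisely where the hypothesis $g\geq 2$ enters. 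Since you also quote Farber's theorem that $\TC(\Sigma_g)=\zcl_2(\Sigma_g)=4$ (in the reduced normalization used here), your conclusion survives with that citation alone; just replace the parenthetical choice of $i,j$ by the four distinct classes $\bar a_1,\bar b_1,\bar a_2,\bar b_2$. Your final remark that zero-divisors pull back to zero-divisors under $f^*$ is fine and is exactly the content of \propref{p:injectivity of zcl}.
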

\begin{proof}
Since $N$ is an oriented 2-dimensional manifold, it satisfies the hypothesis of Proposition \ref{p:injectivity of zcl}. So we are done. 
\end{proof}

\begin{thm}\label{t:h_sphere}
Let $M$ be an odd-dimensional oriented manifold which is not a rational sphere. If $f \colon M \to N$ has degree $\pm 1$ and $\TC(N) =3$, then $\TC(M) \geq \TC(N)$. 
\end{thm}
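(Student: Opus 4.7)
The plan is to show the stronger statement $\TC(M)\geq 3$ directly, by exhibiting three zero divisors in $H^*(M\ts M;\QQ)$ whose cup product is nonzero, and then invoking the bound $\TC_2(X)\geq \zcl_2^{\QQ}(X)$ from \theoref{t:cup}. The hypothesis that there is a degree one map into an $N$ with $\TC(N)=3$ plays no explicit structural role; the argument will show that any odd-dimensional closed orientable manifold that is not a rational sphere has $\TC\geq 3$, which is all that is needed.

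Set $\dim M=2k+1$. Since $M$ is not a rational sphere, there is a nonzero class $\alpha\in H^p(M;\QQ)$ with $0<p<2k+1$, and by Poincar\'e duality I can choose $\beta\in H^{2k+1-p}(M;\QQ)$ with $\alpha\smile\beta=v$ a generator of $H^{2k+1}(M;\QQ)$. Because $2k+1$ is odd, exactly one of $p$ and $2k+1-p$ is even; after swapping $\alpha$ and $\beta$ if necessary, I may assume $|\alpha|=p$ is even. Form the zero divisors $\bar\alpha=1\otimes\alpha-\alpha\otimes 1$ and $\bar\beta=1\otimes\beta-\beta\otimes 1$ (both annihilated by $d_2^*$), and analyze the triple product $\bar\alpha^2\smile\bar\beta$.

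The parity choice is crucial: with $|\alpha|$ even one has $\bar\alpha^2=1\otimes\alpha^2+\alpha^2\otimes 1-2(\alpha\otimes\alpha)$, whereas if $|\alpha|$ were odd then $\bar\alpha^2$ would vanish in rational coefficients, defeating the construction. Expanding $\bar\alpha^2\smile\bar\beta$ via the K\"unneth formula, and noting that $|\alpha^2\beta|=p+(2k+1)>\dim M$ forces $\alpha^2\beta=0$, a short calculation yields
\[
\bar\alpha^2\smile\bar\beta \;=\; \alpha^2\otimes\beta \;-\; \beta\otimes\alpha^2 \;-\; 2(\alpha\otimes v) \;+\; 2(v\otimes\alpha).
\]
The summands $\alpha\otimes v$ and $v\otimes\alpha$ sit in bidegrees $(p,2k+1)$ and $(2k+1,p)$, which are distinct from each other and from the bidegrees $(2p,2k+1-p)$ and $(2k+1-p,2p)$ of the possibly-nonzero $\alpha^2$-terms, since $2k+1$ is odd while $2p$ is even and $0<p<2k+1$. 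Hence the K\"unneth components survive, $\bar\alpha^2\smile\bar\beta\ne 0$ in $H^*(M\ts M;\QQ)$, so $\zcl_2^{\QQ}(M)\geq 3$ and therefore $\TC(M)\geq 3=\TC(N)$.

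The only real obstacle is the parity selection: one must genuinely use that $\dim M$ is odd to produce an \emph{even}-degree class $\alpha$ with $\alpha\smile\beta\ne 0$, because the quadratic zero divisor $\bar\alpha^2$ collapses for odd-degree $\alpha$ over $\QQ$. The rest is bookkeeping in the K\"unneth decomposition, together with the standard Poincar\'e-duality pairing that supplies the required complementary class $\beta$.
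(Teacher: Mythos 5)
Your proof is correct and follows the same route as the paper's: choose a Poincar\'e-dual pair $\alpha,\beta$ with $|\alpha|$ even (possible because $\dim M$ is odd), form the zero-divisors $\bar\alpha,\bar\beta\in H^*(M\times M;\QQ)$, show $\bar\alpha^{2}\smile\bar\beta\neq 0$ by isolating a surviving K\"unneth bidegree, and conclude $\zcl_2(M)\geq 3$ hence $\TC(M)\geq 3=\TC(N)$ via Theorem~\ref{t:cup}. If anything your bookkeeping is tighter than the paper's own computation, which writes $a\smile a=-2\alpha\otimes\alpha$ and so silently drops the $1\otimes\alpha^{2}$ and $\alpha^{2}\otimes 1$ summands (which need not vanish, e.g.\ for $M=\CP^2\times S^1$ with $\alpha\in H^2$); you retain them, observe $\alpha^{2}\beta=0$ for degree reasons, and check that the bidegree $(p,\dim M)$ receives only the nonzero $-2\,\alpha\otimes v$ contribution, which cleanly closes that small gap.
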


\begin{proof}
Since $M$ is not a rational sphere, then there is a non-zero cohomology class $\alpha \in H^i(M; \RR)$ for some $0 < i < \dim M$. Then there is a Poincar{\'e} dual, denoted by $\alpha'$,  of $\alpha$ such that $\alpha \smile \alpha'\neq 0$ in $H^{\dim M}(M; \RR)$. Let $a := 1 \otimes \alpha - \alpha \otimes 1$ and  $b := 1 \otimes \alpha' - \alpha' \otimes 1$.  So $a, b \in \zcl_2(M)$. 

If $\dim \ga$ is even then
\[
a\smile a =-2\ga\otimes \ga \neq 0.
\]
Hence for $M\ts M$ we have
\[
a\smile a \smile b=(-2\ga\otimes \ga)\otimes(\ga'\otimes 1-1\otimes \ga')=(-2\ga\smile \ga')\otimes \ga)+(2\ga\otimes\ga)\smile \ga';
\]
here the cup-product $\smile$ in the right-hand side of the last equality appears as $M$, not $M\ts M$. Since $a\smile a \smile b\neq 0$ is $\zcl_2(M)$, we conclude that $\TC(M) \geq 3=\TC(N)$.  

If $\dim a$ is odd then $\dim b$ is even, and the same argument shows that $\TC(M) \geq 3=\TC(N)$.  
\end{proof}

\begin{thm}\label{t:h_betino}
 Let $M^{2n}$ be a closed orientable manifold. Assume that there exists $k, 0<k<n$ such that  $2k$-th Betti number of $M$  is non-zero. Then $\TC(M) \geq 4$. In particular, if $f \colon M \to N$ has degree $\pm 1$ and $\TC(N) \leq 4$, then $\TC(M) \geq \TC(N)$. 
\end{thm}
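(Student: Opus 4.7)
The plan is to establish $\zcl_2^{\RR}(M) \geq 4$, which by \theoref{t:cup} gives $\TC(M) = \TC_2(M) \geq 4$. The ``in particular'' assertion is then immediate: if $f \colon M \to N$ has degree $\pm 1$ and $\TC(N) \leq 4$, then $\TC(M) \geq 4 \geq \TC(N)$.

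The Betti hypothesis supplies a non-zero class $\alpha \in H^{2k}(M;\RR)$ for some $0 < k < n$, and Poincar\'e duality on the closed oriented manifold $M^{2n}$ produces a dual class $\alpha' \in H^{2n-2k}(M;\RR)$ with $\alpha \smile \alpha' \neq 0$. Both classes have even degree, so the elements $a := 1 \otimes \alpha - \alpha \otimes 1$ and $b := 1 \otimes \alpha' - \alpha' \otimes 1$ of $H^*(M \times M;\RR)$ are zero-divisor classes of grade $2$. Graded commutativity (all signs trivial by evenness) yields
\[
a^2 = 1 \otimes \alpha^2 - 2\,\alpha \otimes \alpha + \alpha^2 \otimes 1, \qquad b^2 = 1 \otimes \alpha'^2 - 2\,\alpha' \otimes \alpha' + \alpha'^2 \otimes 1,
\]
and my next step is to expand $a^2 \smile b^2$ and isolate its component in the top bi-degree $H^{2n}(M;\RR) \otimes H^{2n}(M;\RR)$. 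The inequalities $0 < k < n$ place the classes $\alpha^2 \alpha'$, $\alpha \alpha'^2$, and $\alpha^2 \alpha'^2$ in cohomological degrees strictly greater than $2n$, so all such mixed terms vanish in $M$. What survives in bi-degree $(2n,2n)$ is
\[
4\,(\alpha \smile \alpha') \otimes (\alpha \smile \alpha') \;+\; \varepsilon_{n,k}\bigl(\alpha^2 \otimes \alpha'^2 + \alpha'^2 \otimes \alpha^2\bigr),
\]
where $\varepsilon_{n,k} = 1$ exactly when $k = n/2$ (otherwise the extra terms sit in a different bi-degree and make no contribution).

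When $k \neq n/2$ the $(2n,2n)$-component is simply $4(\alpha\alpha') \otimes (\alpha\alpha') \neq 0$ and the conclusion follows. The main obstacle is the middle-dimensional case $k = n/2$ (so $n$ is even and $\alpha, \alpha' \in H^n(M;\RR)$), where the three contributions could a priori cancel. To handle it, let $\mu_M$ generate $H^{2n}(M;\RR)$ and write $\alpha^2 = \lambda\mu_M$, $\alpha'^2 = \mu\mu_M$, $\alpha \smile \alpha' = \nu\mu_M$ with $\nu \neq 0$; the top coefficient becomes $4\nu^2 + 2\lambda\mu$. If $\alpha^2 = 0$ then $\lambda = 0$, and the coefficient is $4\nu^2 \neq 0$. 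If $\alpha^2 \neq 0$ then $\alpha \smile \alpha \neq 0$, so I exploit the freedom in the choice of $\alpha'$ by simply taking $\alpha' := \alpha$; then $b = a$, I am effectively computing $a^4$, and $\lambda = \mu = \nu$ yields coefficient $6\lambda^2 \neq 0$. In every case $a^2 \smile b^2 \neq 0$ in $H^*(M \times M;\RR)$, so $\zcl_2^{\RR}(M) \geq 4$, and \theoref{t:cup} then delivers $\TC(M) \geq 4$, from which the stated consequence follows at once.
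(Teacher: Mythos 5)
Your argument is correct, and it follows the same basic strategy as the paper: bound $\TC(M)=\TC_2(M)$ below by $\zcl_2^{\RR}(M)$ using \theoref{t:cup} and the grade-$2$ zero-divisors $a = 1\otimes\alpha - \alpha\otimes 1$ and $b = 1\otimes\alpha' - \alpha'\otimes 1$ built from a Poincar\'e-dual pair. The genuine difference is that your treatment is substantially more careful than the paper's, and the extra care is actually needed. The paper simply asserts $a\smile a=\alpha\otimes\alpha$; in fact $a^2 = 1\otimes\alpha^2 - 2\,\alpha\otimes\alpha + \alpha^2\otimes 1$, and the extra terms drop out of the top bidegree of $a^2\smile b^2$ only when $2k\neq n$ (there the products $\alpha^2\alpha'$, $\alpha\alpha'^2$ die by degree). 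In the middle-dimensional case $2k=n$ both $\alpha^2$ and $\alpha'^2$ may survive, the top coefficient is $4\nu^2+2\lambda\mu$, and this really can vanish for a poor choice of $\alpha'$ (e.g.\ with intersection form having $\alpha^2=2\mu_M$, $\alpha'^2=-\mu_M$, $\alpha\alpha'=\mu_M$). Your repair --- take $\alpha'=\alpha$ when $\alpha^2\neq 0$, giving coefficient $6\lambda^2\neq 0$, and note $4\nu^2\neq 0$ when $\alpha^2=0$ --- is exactly what is needed to close that gap. In short, you reproduce the paper's route but correctly dispose of the middle-dimensional subtlety that the paper's proof glosses over.
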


\begin{proof}
There is a non-zero cohomology class $\alpha \in H^{2k}(M; \RR)$ for some $0 < 2k < \dim M$. Then there is a Poincar{\'e} dual, denoted by $\alpha' \in H^{2n-2k}(M; \RR)$,  of $\alpha$ such that $\alpha \smile \alpha'\neq 0$ in $H^{2n}(M; \RR)$. Let $a := 1 \otimes \alpha - \alpha \otimes 1$ and  $b := 1 \otimes \alpha' - \alpha' \otimes 1$ in $H^*(M\ts M;\RR)$. Then $a\smile a=\alpha\otimes \alpha$, and similarly for $b$. So $a, b \in \zcl_2(M)$ and $a\smile a \smile b \smile b\neq 0$ in $\zcl(M)$. Thus $\TC(M) \geq 4$.  
\end{proof}

\begin{proposition}
Let $M, N$ be two 3-dimensional oriented manifolds. Suppose that $\TC(N) =3$. If $f \colon M \to N$ has degree $\pm 1$, then $\TC(M) \geq 3$. 
\end{proposition}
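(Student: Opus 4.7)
The plan is to distinguish two cases according to whether $M$ is a rational homology $3$-sphere, applying \theoref{t:h_sphere} in the first case and a direct cohomological argument in the second.

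If $M$ is not a rational homology $3$-sphere, then since $\dim M=3$ is odd, \theoref{t:h_sphere} applies immediately and gives $\TC(M)\geq \TC(N)=3$. Otherwise $M$ is a rational $3$-sphere, so by \propref{p:deg} the map $f^*\colon H^*(N;\mathbb{Q})\to H^*(M;\mathbb{Q})$ is injective, forcing $N$ to be a rational $3$-sphere as well. Since $\TC(N)=3\ne 1=\TC(S^3)$, $N$ is not a homotopy sphere, hence $\pi_1(N)\ne 1$; the surjectivity of $f_*\colon\pi_1(M)\to\pi_1(N)$ in \propref{p:deg} then yields $\pi_1(M)\ne 1$, and as $M$ is a rational sphere, $H_1(M;\mathbb{Z})$ is a nontrivial finite abelian group.

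When some odd prime $p$ divides $|H_1(M;\mathbb{Z})|$, the argument proceeds as follows. Pick $u\in H^1(M;\mathbb{Z}/p)\setminus\{0\}$, and use mod-$p$ Poincar\'e duality to choose $v\in H^2(M;\mathbb{Z}/p)$ with $u\smile v\neq 0$. Form the zero-divisors $a=1\otimes u-u\otimes 1$ and $b=1\otimes v-v\otimes 1$ in $H^*(M\times M;\mathbb{Z}/p)$. Mirroring the proof of \theoref{t:h_sphere}, one finds $b^2=-2\,v\otimes v$ (since $|v|$ is even and $v^2\in H^4(M;\mathbb{Z}/p)=0$), which is nonzero mod $p$; then $a\smile b^2=-2(v\otimes uv-uv\otimes v)$ is nonzero because its two components lie in the distinct K\"unneth bidegrees $(2,3)$ and $(3,2)$. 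Hence $\zcl_2^{\mathbb{Z}/p}(M)\geq 3$, and \theoref{t:cup} gives $\TC(M)\geq 3$.

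The main obstacle is the subcase where $H_1(M;\mathbb{Z})$ has no odd torsion, i.e., is either a nontrivial $2$-group or (in the extreme) zero while $\pi_1(M)$ remains nontrivial (an integer homology sphere). In characteristic $2$ the identity $b^2=-2\,v\otimes v$ collapses, so the preceding argument fails. When $H_1(M;\mathbb{Z})$ has $2$-torsion, one can invoke geometrization to identify $M$ with a spherical space form and verify $\zcl_2^{\mathbb{Z}/2}(M)\geq 3$ directly; for instance, for $M\simeq \mathbb{RP}^3$ the class $a=1\otimes u+u\otimes 1$ satisfies $a^3\ne 0$ since $u^3\ne 0$ in $H^*(\mathbb{RP}^3;\mathbb{Z}/2)=\mathbb{Z}/2[u]/(u^4)$. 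For integer homology $3$-spheres with nontrivial $\pi_1$, cohomology with constant coefficients cannot detect $\TC\geq 3$, and one would need twisted coefficients, higher cohomology operations, or a $3$-manifold-topological argument to restrict the admissible pairs $(M,N)$.
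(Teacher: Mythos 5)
Your proposal has a genuine gap in the case where $M$ is a rational homology $3$-sphere and $H_1(M;\ZZ)$ has no odd torsion. You try to salvage the $2$-torsion subcase by ``invoking geometrization to identify $M$ with a spherical space form,'' but this is false: a rational homology $3$-sphere need not have finite fundamental group (there are many hyperbolic rational homology $3$-spheres with nontrivial finite $H_1$ and infinite $\pi_1$), so geometrization does not put you on a spherical space form. For integer homology $3$-spheres with $\pi_1\neq 1$ you explicitly concede the proof is open. So as written the argument is incomplete, and your stated escape route for the $2$-torsion case does not work.

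The paper's actual proof is simpler and goes through $\cat$ rather than $\zcl$: assume $\TC(M)\leq 2$, so $\cat M\leq 2$; if $\cat M=1$ then $M$, and hence $N$ (by \corref{c:deg}), is a homotopy sphere, contradicting $\TC(N)=3$; if $\cat M=2$ then $\pi_1(M)$ is free by the Dranishnikov--Katz--Rudyak theorem, so the closed orientable $M$ is a connected sum of copies of $S^1\ts S^2$, and then $\TC(M)=3$ by the connected-sum result of Dranishnikov--Sadykov together with $\TC(S^1\ts S^2)=3$, a contradiction. Note that the same ingredient would close \emph{your} gap cleanly: in your subcase B the manifold $M$ is a rational homology $3$-sphere with $\pi_1(M)\neq 1$; since $H_1(M;\QQ)=0$, the group $\pi_1(M)$ cannot be a nontrivial free group, so by DKR one has $\cat M=3$, and then $\TC(M)\geq\cat M=3$ at once, with no coefficient analysis needed. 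In short, your $\zcl$ computation with odd primes is correct but insufficient; the missing step is the $\cat\leq\TC$ bound combined with DKR, which the paper uses directly.
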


\begin{proof}
This follows from \propref{t:h_sphere}, but give also an alternative proof. By way of contradiction, suppose that $\TC(M)\leq 2$, Than $\cat M\leq 2$. If $\cat M=1$ then $M$ is a homotopy sphere, and hence $N$ is a homotoopy sphere by \propref{p:cat=1}. This contradicts the condition $\TC(N)=3$. If $\cat M=2$ then $M$ has a free fundamental group, ~\cite{DKR}. Hence $M$ is a connected sum of copies of $S^1\times S^2$ (because $M$ is orientable), this is classical on 3-manifolds, \cite{H}. Since $\TC(S^1\ts S^2)=3$ by~\cite[Cor. 3.12]{BGRT}, we conclude that $\TC(M)=3$ by ~\cite{DrSa}. 
.
\end{proof}

\forget

By way of contradiction, suppose that $\TC(M)\leq 2$, Than $\cat M\leq 2$. If $\cat M=1$ then $M$ is a homotopy sphere, and hence $N$ is a homotoopy sphere by \propref{p:cat=1}. This contradicts the condition $\TC(N)=3$. Furthermore, if $\cat N=2$ then $\pi_1(M)$ is a free group by~\cite{DKR}. Hence, $M$ is not a rational homotopy sphere. This contradicts \theoref{t:h_sphere}.
\forgotten

\begin{proposition}
Let $f \colon M \to N$ be a map of degree $\pm 1$ and $(N, \omega)$ a simply connected symplectic manifold. Then $\TC(M) \geq \TC(N)$.
\end{proposition}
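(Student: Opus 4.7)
The plan is to reduce the claim to \propref{p:injectivity of zcl} by establishing the equality $\TC(N)=\zcl_2^{\RR}(N)$, and to this end I would exhibit matching bounds of $2n$ on both invariants.

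For the upper bound I would invoke the earlier result for simply connected symplectic manifolds in the previous section, which gives $\cat(N)=n$, combined with the classical chain $\TC(X)\leq\cat(X\ts X)\leq 2\cat(X)$; this yields $\TC(N)\leq 2n$.

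For the matching lower bound I would test the zero-divisor
\[
a:=1\otimes[\omega]-[\omega]\otimes 1\in H^2(N\ts N;\RR);
\]
it satisfies $d_2^*(a)=[\omega]-[\omega]=0$, so $a\in\zcl_2^{\RR}(N)$. Since $[\omega]$ has even degree, the factors $1\otimes[\omega]$ and $[\omega]\otimes 1$ commute in $H^*(N\ts N;\RR)$, so the ordinary binomial expansion applies:
\[
a^{2n}=\sum_{k=0}^{2n}\binom{2n}{k}(-1)^k[\omega]^k\otimes[\omega]^{2n-k}.
\]
Because $[\omega]^{n+1}=0$ for dimensional reasons, every summand with $k<n$ or $k>n$ vanishes, leaving $a^{2n}=\binom{2n}{n}(-1)^n[\omega]^n\otimes[\omega]^n\neq 0$. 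Together with \theoref{t:cup} this forces $\zcl_2^{\RR}(N)=\TC(N)=2n$.

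Finally, \propref{p:injectivity of zcl} applied over $\RR$ to the degree $\pm 1$ map $f\colon M\to N$ delivers $\TC(M)\geq\zcl_2^{\RR}(M)\geq\zcl_2^{\RR}(N)=\TC(N)$, as desired. The single slightly delicate step is the binomial identification of the surviving term, but this is immediate once one notes that only $k=n$ produces a bidegree in which both tensor factors survive; the injectivity of $(f\ts f)^*$ in cohomology, the value of $\cat(N)$, and the Schwarz-type lower bound \theoref{t:cup} are all already in hand from earlier in the paper.
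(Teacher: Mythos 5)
Your proof is correct and follows the same route as the paper, which simply cites Propositions~\ref{p:deg} and~\ref{p:injectivity of zcl} without verifying the hypothesis $\TC(N)=\zcl_2(N)$ of the latter. Your zero-divisor computation with $a=1\otimes[\omega]-[\omega]\otimes1$ (only the bidegree $(2n,2n)$ term of $a^{2n}$ survives, yielding $\binom{2n}{n}(-1)^n[\omega]^n\otimes[\omega]^n\neq0$) together with $\TC(N)\leq 2\cat(N)=2n$ supplies exactly that missing verification, so your write-up is a complete version of the paper's terse argument.
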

\begin{proof}
This follows from Proposition \ref{p:deg} and \ref{p:injectivity of zcl}.  
\end{proof}

{\bf Acknowledgement:}   The second author thanks University of Florida for supporting his visit, `International office IIT Madras' and `Science and Engineering Research Board India' for research grants. The authors thank John Oprea, Jamie Scott and Petar Pavesic for some helpful discussion.  

%%%%%%%%%%%%%%%%%%%%%%%%%%%%%%%%%%%%%%%%%%%%%%%%%%%%

\renewcommand{\refname}{References}

%\begin{thebibliography}{ALR}
%\bibliographystyle{alpha}
%\bibliographystyle{abbrv}
%\bibliography{bibliography.bib}
%\begin{thebibliography}{DJ}

\vspace{1cm}

\end{document}